\newtheorem{thm}{Theorem}[section]
\newtheorem{lemma}[thm]{Lemma}
\newtheorem{definition}[thm]{Definition}
\newtheorem{conjecture}[thm]{Conjecture}
\newtheorem{remark}[thm]{Remark}
\title[$T$-avoiding spherical codes and designs]{Universal optimality of $T$-avoiding spherical codes and designs 
}
\begin{document}

\author[P. Boyvalenkov]{P. G. Boyvalenkov}
\address{ Institute of Mathematics and Informatics, Bulgarian Academy of Sciences,
8 G Bonchev Str., 1113  Sofia, Bulgaria}
\email{peter@math.bas.bg}

\author[D. Cherkashin]{D. D. Cherkashin}
\address{ Institute of Mathematics and Informatics, Bulgarian Academy of Sciences,
8 G Bonchev Str., 1113  Sofia, Bulgaria}
\email{jiocb@math.bas.bg}

\author[P. Dragnev]{P. D. Dragnev}
\address{ Department of Mathematical Sciences, Purdue University 
Fort Wayne, Fort Wayne, IN 46805, USA }
\email{dragnevp@pfw.edu}

\begin{abstract}
Given an open set $T\subset [-1,1)$, we introduce the concepts of $T$-avoiding spherical codes and designs, that is, spherical codes that have no inner products in the set $T$. We show that certain codes found in the minimal vectors of the Leech lattice, as well as the minimal vectors of the Barnes--Wall lattice and codes derived from strongly regular graphs, are universally optimal in the restricted class of $T$-avoiding codes. We also extend a result of Delsarte--Goethals--Seidel about codes with three inner products $\alpha, \beta, \gamma$ (in our terminology  $(\alpha,\beta)$-avoiding $\gamma$-codes). Parallel to the notion of tight spherical designs, we also derive that these codes are minimal (tight) $T$-avoiding spherical designs of fixed dimension and strength. In some cases, we also find that codes under consideration have maximal cardinality in their $T$-avoiding class for given dimension and minimum distance.  
\end{abstract}

\maketitle

\centerline{\it On the occasion of Ed Saff's 80th birthday}

\bigskip

MSC: 94B65, 52C17, 05B30, 74G65

\bigskip

{\it Keywords.} Spherical codes, Linear programming, Distance-avoiding codes, Universally optimal codes, Potential energy minimization

\section{Introduction}

Let $\mathbb{S}^{n-1}=\{x=(x_1,\ldots,x_n): x_1^2+\cdots+x_n^2=1\}$ be the 
unit sphere in $n$ dimensions. A finite set $C \subset \mathbb{S}^{n-1}$
is a spherical $(n,N,s)$ code if $|C|=N$ and 
\[
s=s(C)=\max \{ x \cdot y : x,y \in C,
x \neq y\}.
\]
The parameter $s(C)$ is called the maximal cosine of $C$ and 
we shall also refer to $C$ as an $s$-code. Denote by 
\[ I (C):=\{ x \cdot y : x,y \in C, x \neq y\} \]
the set of all inner products of distinct points of $C$. Note that $s = \max I(C)$.

\begin{definition}
Let $T \subset [-1,1)$. A spherical code $C \subset \mathbb{S}^{n-1}$  
is called $T$-avoiding if $I(C) \cap T = \varnothing$.  
\end{definition}

As is often the case in the study of good spherical codes, the notion of 
spherical designs is very useful. 

\begin{definition}
A spherical $\tau$-design is a spherical 
code $C \subset \mathbb{S}^{n-1}$ such that
\begin{equation} \label{MultiPolQ}
\int_{\mathbb{S}^{n-1}} p(x) d\sigma_n(x)= \frac{1}{|C|} \sum_{x \in C} p(x) 
\end{equation}
($\sigma_n $ is the normalized surface measure) holds for all polynomials $p(x) = p(x_1,x_2,\ldots,x_n)$ of total degree at most $\tau$.
\end{definition}

The notion of energy of a code $C$ for a given interaction potential $h$ and finding energy minimizers has been the focus of many research articles (see, for example, \cite{BGMPV1,BHS2019,BDHSS2016,B2024,BG2015,CK2007,CKMRV22,DLT,HarSaf2004,KY1997,SafKui1997,Schw} and references therein).

\begin{definition}
Given a function $h:[-1,1)\to (-\infty,+\infty)$, continuous on $[-1,1)$, we consider the $h$-energy of a spherical code $C$ as 
\begin{equation} \label{h-energy} E_h (C):=\sum_{x\not= y\in C} h(x\cdot y).
\end{equation} 
\end{definition}

In their seminal paper \cite{CK2007} Cohn and Kumar introduced the concept of universally optimal codes as global minimizers (of fixed cardinality) of the $h$-energy for all absolutely monotone potentials, that is, potentials $h$ such that $h^{(j)}\geq 0$, $j\geq 1.$ In this regard, we introduce the notion of $T$-avoiding universally optimal codes.

\begin{definition}
Given an open set $T\subset [-1,1)$, a code $C\subset \mathbb{S}^{n-1}$ is called $T$-avoiding universally optimal if for any absolutely monotone potential $h$ it minimizes the $h$-energy among all $T$-avoiding codes with cardinality $|C|$. 
\end{definition}

In \cite{GV2024} Gon{\c{c}}alves--Vedana consider lattices in $\mathbb{R}^{48}$ 
which avoid certain distances that after reformulating into inner products, lead to the set $T=(-1/3,-1/6) \cup (1/6,1/3)$. The main result in \cite{GV2024}
is that among these, say $T$-avoiding lattices (with the above $T$), 
four known lattices in $\mathbb{R}^{48}$ have the maximal possible density. This is a result
in the spirit of the celebrated proofs of the optimality of the $E_8$ lattice by Viazovska \cite{V} and the Leech lattice by Cohn--Kumar--Miller--Radchenko--Viazovska \cite{CKMRV17} (see also \cite{CKMRV22}). 
Moreover, as shown in \cite{BC2025}, the spherical codes defined by the minimal vectors of the same four lattices solve the kissing number problem (with some restrictions) among $T$-avoiding (1/2)-codes. Furthermore, these codes were also shown in \cite{BD2025} to be universally optimal (in Cohn--Kumar sense) among $T$-avoiding codes (with some restrictions), meaning that for any absolutely monotone potential function $h$, the $h$-energy is minimal in the class of $T$-avoiding codes.

The results described in the last paragraph prompt us to consider, for special choices of dimension $n$, set $T$, and other relevant parameters, the following problems about maximal cardinality of $T$-avoiding $s$-codes, minimal cardinality for $T$-avoiding designs and minimum $h$-energy of $T$-avoiding codes.

{\bf Problem 1}: For given $n$, $s$, and $T$, find the maximal cardinality of $T$-avoiding $s$-codes on $\mathbb{S}^{n-1}$. Find {\em maximal $T$-avoiding $s$-codes}. 

\begin{remark} 
The classical coding theory problem seeks to find the maximal cardinality of $s$-codes on $\mathbb{S}^{n-1}$, which in our terminology is to determine the maximal cardinality of $(s,1)$-avoiding codes. Instead of adding $(s,1)$ to our sets $T$ everywhere, we adhere to the widely used notion of $s$-codes (which is equivalent). 
\end{remark}

{\bf Problem 2}: For given $n$, $\tau$, and $T$, find the minimum cardinality of $T$-avoiding designs on $\mathbb{S}^{n-1}$. Find {\em tight $T$-avoiding spherical $\tau$-designs}. 

{\bf Problem 3}: For given $n$, $N$, an absolutely monotone potential $h$, and $T$ find the minimum $h$-energy (see \eqref{h-energy} above) of $T$-avoiding codes of cardinality $N$ on $\mathbb{S}^{n-1}$. Find {\em universally optimal $T$-avoiding codes}.

In this paper we adopt and apply linear programming techniques for these problems. We focus on codes embedded in the Leech lattice, the minimal vectors of the Barnes--Wall lattice, three-distance codes (or $|I(C)|=3$), and on codes obtained as spectral embeddings of strongly regular graphs. Then we choose appropriate sets $T$ to be avoided for each code and prove that our codes are optimal for Problems 1, 2, and 3, respectively. We note however, that the technique is more general and may be applied to other spherical codes and designs. We note also that the universal optimality of the $T$-avoiding codes we discuss holds for a larger class of potentials (see Remark \ref{AbsM}).

The paper is organized as follows. In Section 2 we collect necessary definitions and facts about spherical codes and designs. The general linear programming theorems corresponding to Problems 1, 2, and 3 are shown in Section 3 together with the conditions of attaining these bounds. In Section 4 we give initial descriptions of our target codes as we list their parameters and provide information for relations between them. Section 5, 6, and 7 are devoted to maximal $T$-avoiding codes, minimum $T$-avoiding designs, and universally optimal $T$-avoiding codes, respectively. The first proof in each section contains all necessary technical details while we only sketch the remaining proofs and give the details in an Appendix.

\section{Preliminaries}

In this section, we introduce some preliminaries that will allow us to state and prove our results. For fixed dimension $n$ we consider the collection of {\em Gegenbauer orthogonal polynomials} $P_i^{(n)}$, $i=0,1,\dots$, where the measure of orthogonality is $$d\mu_n(t)=\gamma_n (1-t^2)^{(n-3)/2}\, dt$$ with a normalizing constant $\gamma_n$ chosen so that $\mu_n$ becomes a probability measure. The Gegenbauer polynomials are uniquely determined by the conditions $P_i^{(n)}(1)=1$. Note that Gegenbauer polynomials are Jacobi polynomials $P_i^{(\alpha,\beta)}$ with parameters $\alpha=\beta=(n-3)/2$. 

Given a code $C\subset \mathbb{S}^{n-1}$ we define the $i$-th moment of $C$, $i \geq 1$, as
$$ M_i(C):=\sum_{x, y \in C} P_i^{(n)}(x\cdot y).$$ 
A key property of Gegenbauer polynomials is their positive definiteness~\cite{Sch42}, which means that $M_i (C) \geq 0$ for every $i$ and $C$.

Then a code $C$ is a spherical $\tau$-design if and only if $M_i(C)=0$, $i=1,\dots,\tau$. 
Another useful property of moments is that an antipodal code $C$ (such a code that $C = -C$) has $M_i(C) = 0$ for all odd $i$.

The linear programming method on the sphere is based on building a polynomial $f$ with a proper sign on $[-1,1]\setminus T$ and proper signs of the coefficients in the Gegenbauer expansion
\begin{equation} \label{geg-exp}
f(t) = \sum_{i=0}^{\deg f} f_i P_i^{(n)}(t).
\end{equation}
The coefficients of the expansion \eqref{geg-exp} satisfy
\[
f_i = \frac{\int_{-1}^{1} f(t) \cdot P_i^{(n)}(t) \cdot d \mu_n(t)}{\int_{-1}^{1} \left(P_i^{(n)}(t) \right)^2 \cdot d \mu_n(t)}.    
\]

An $(n, N, d, \tau)$-\textit{configuration} is a spherical code with $N$ points on $\mathbb{S}^{n-1}$ which is a spherical $\tau$-design and $|I(C)| = d$ (i.e. $C$ has $d$ different positive distances). 

For any point $x \in C$ and inner product $t \in I(C)$, we denote by
\[ A_t(x):=|\{y \in C : \langle x,y \rangle  = t\}| \]
the number of the points of $C$ with inner product $t$ with
$x$. Then the system of nonnegative integers
$$(A_t(x): t \in I(C))$$ is called the \textit{distance distribution
of $C$ with respect to the point $x$}. If $A_t(x)$ does not depend on the choice of $x \in C$ for every $t \in I(C)$, then the code $C$ is called {\it distance invariant} (cf. \cite[Definition 7.2]{DGS}). We will omit $x$ in the notation for distance invariant codes. Note that $A_{-1}=1$ means that $C$ is antipodal, i.e. $C=-C$. For antipodal codes we have
$A_t(x)=A_{-t}(x)$ for every $t \in I(C) \setminus \{-1\}$ and every $x \in C$.

For a distance-invariant $C$ we consider the distance distribution of $C$ to be the ordering $F(C)$ of $(A_t(x): t \in I(C))$ as the order corresponds to the increasing of the elements of $I(C)$ (in particular, for an antipodal $C$ the set $F(C)$ starts with 1). Thus, 
\[ F(C):= \left( A_{t_i} : t_i \in I(C) \right), \]
where $I(C)=\{t_1,\ldots,t_d\}$ and $t_1<\cdots<t_d$. 

\begin{thm}[part of Theorem 7.4 in~\cite{DGS}] \label{thm:DGSdistances}
Let $C$ be an $(n, N, d, \tau)$-configuration with $\tau \geq d - 1$.
Then $C$ is distance invariant.  
\end{thm}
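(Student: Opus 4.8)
The plan is to run the classical Lagrange interpolation argument, exactly as in \cite{DGS}, but keeping careful track of the diagonal term. Write $I(C)=\{t_1,\dots,t_d\}$ with $t_1<\cdots<t_d$, and for each $j\in\{1,\dots,d\}$ introduce the degree-$(d-1)$ polynomial
\[
F_j(u):=\prod_{i\ne j}\frac{u-t_i}{t_j-t_i},
\]
so that $F_j(t_i)=\delta_{ij}$ for $1\le i\le d$. Fix $x\in C$. Splitting $\sum_{y\in C}F_j(x\cdot y)$ into the single term $y=x$ (which contributes $F_j(1)$) and the remaining terms grouped according to the value $t_i=x\cdot y\in I(C)$ (which contribute $\sum_i A_{t_i}(x)F_j(t_i)=A_{t_j}(x)$), I obtain the purely combinatorial identity
\[
\sum_{y\in C}F_j(x\cdot y)=F_j(1)+A_{t_j}(x).
\]

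Next I would invoke the design hypothesis. For $x$ fixed, the map $y\mapsto F_j(x\cdot y)$ is a polynomial in $y=(y_1,\dots,y_n)$ of total degree $\deg F_j=d-1$, and by assumption $d-1\le\tau$. Hence, directly from the definition of a spherical $\tau$-design,
\[
\frac{1}{N}\sum_{y\in C}F_j(x\cdot y)=\int_{\mathbb{S}^{n-1}}F_j(x\cdot y)\,d\sigma_n(y).
\]
Because $\sigma_n$ is rotation invariant, its pushforward under $y\mapsto x\cdot y$ is $\mu_n$ for every unit vector $x$, so the right-hand side equals the constant $c_j:=\int_{-1}^{1}F_j(t)\,d\mu_n(t)$, which depends only on $n$ and on $I(C)$, not on $x$. (Equivalently one could expand $F_j=\sum_{k=0}^{d-1}f_{j,k}P_k^{(n)}$ and use $M_k(C)=0$ for $1\le k\le\tau$ together with the addition formula to annihilate every term with $k\ge1$, leaving $f_{j,0}N$; the pushforward argument is cleaner.) Comparing the two displays gives $A_{t_j}(x)=Nc_j-F_j(1)$, which is independent of $x$; since $j$ is arbitrary, $C$ is distance invariant.

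I do not anticipate a genuine obstacle in this proof. The two points that must be handled with care are: (i) correctly isolating the diagonal contribution $y=x$, so that the interpolation identity reads $F_j(1)+A_{t_j}(x)$ and not merely $A_{t_j}(x)$; and (ii) verifying that the degree bound $\deg F_j=d-1\le\tau$ is precisely what the hypothesis $\tau\ge d-1$ supplies, which is what licenses applying the design property to $y\mapsto F_j(x\cdot y)$. The degenerate case $d=1$ (where $F_1\equiv 1$, $c_1=1$, and the conclusion collapses to $A_{t_1}=N-1$) is immediate and falls out of the same computation, so no separate treatment is needed.
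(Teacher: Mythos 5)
The paper does not reproduce a proof of this statement; it simply cites it as part of Theorem~7.4 of Delsarte--Goethals--Seidel. Your argument is correct and is essentially the classical one from that reference: Lagrange interpolation at the nodes $I(C)$, isolation of the diagonal term $F_j(1)$, and the observation that $\deg F_j=d-1\le\tau$ lets the design property (via the Funk--Hecke pushforward, equivalently the quadrature identity \eqref{QF}) evaluate $\sum_{y\in C}F_j(x\cdot y)$ to an $x$-independent constant.
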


If the set $I(C)$ in the assumptions of Theorem~\ref{thm:DGSdistances} is known, then one can find the distance distribution $F(C)$ from the system $M_i(C)=0$, $i=1,\dots,\tau$, or as described in the end of this section.

Starting with a code $C \subset \mathbb{S}^{n-1}$, a point $x \in C$, and an inner product $\alpha \in I(C)$ one can define a derived code $C_{\alpha,x} \subset \mathbb{S}^{n-2}$ after proper rescaling of all points of $C$ having inner product $\alpha$ with $x$. When $C$ is distance-invariant, $I(C_{\alpha,x})$ does not depend on $x$ and we
omit $x$ in the notation. In this case we have (see \cite[Section 8]{DGS})
\[
I(C_{\alpha}) = \left\{\frac{\beta-\alpha^2}{1-\alpha^2} : \beta \in I(C) \right\} \cap [-1,1). 
\]

A derived code inherits some properties of the initial one as shown in the next theorem from \cite{DGS}. 

\begin{thm}[Theorem 8.2 in~\cite{DGS}] \label{thm:DGSderived}
Let $C$ be a $\tau$-design, $x \in C$ and $\alpha \in I(C) \setminus \{-1\}$. Assume that $|I(C_{\alpha,x})| \leq \tau + 1$. Then $C_{\alpha,x}$ is a $(\tau + 1 - |I(C_{\alpha,x})|)$-design.
\end{thm}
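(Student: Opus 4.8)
The plan is to verify the design property of $D:=C_{\alpha,x}$ through the standard harmonic reformulation (equivalent, by the addition formula, to the moment condition recalled above): a finite $D\subset\mathbb{S}^{n-2}$ is a spherical $\tau'$-design if and only if $\sum_{u\in D}\psi(u)=0$ for every real spherical harmonic $\psi$ of degree $j$ with $1\le j\le\tau'$. So I must force $\sum_{u\in D}\psi(u)=0$ for each such $\psi$. Relative to $x$, write every $p\in C$ with $x\cdot p=\beta\neq\pm1$ uniquely as $p=\beta x+\sqrt{1-\beta^2}\,w_p$ with $w_p\in\mathbb{S}^{n-2}$ in the hyperplane $x^{\perp}$; by construction the points of $D$ are exactly the $w_p$ with $x\cdot p=\alpha$. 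Putting $S_\beta(\psi):=\sum_{p\in C,\ x\cdot p=\beta}\psi(w_p)$, the quantity that must be shown to vanish is $S_\alpha(\psi)$.

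The engine is to feed a univariate ``latitude'' factor into the design identity for $C$. Viewing $\psi$ (of degree $j\ge1$ on $\mathbb{S}^{n-2}$) as a harmonic homogeneous polynomial, the function $p\mapsto g(x\cdot p)\,\psi\big(p-(x\cdot p)x\big)$ is an honest polynomial on $\mathbb{R}^n$ of degree at most $\deg g+j$ for any univariate polynomial $g$, since the homogeneity of $\psi$ absorbs the apparent factor $(1-(x\cdot p)^2)^{j/2}$. Hence, as soon as $\deg g+j\le\tau$, the fact that $C$ is a $\tau$-design gives
\[
\sum_{p\in C} g(x\cdot p)\,\psi\big(p-(x\cdot p)x\big)=|C|\int_{\mathbb{S}^{n-1}} g(x\cdot p)\,\psi\big(p-(x\cdot p)x\big)\,d\sigma_n(p)=0,
\]
the integral vanishing because integrating over $\mathbb{S}^{n-2}$ at fixed latitude $t=x\cdot p$ first turns it into a constant times $\int_{\mathbb{S}^{n-2}}\psi\,d\sigma_{n-1}=0$. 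Grouping the left-hand sum according to $\beta=x\cdot p$ --- the poles $\beta=\pm1$ drop out since there $p=\pm x$ and $\psi(0)=0$ --- produces the linear relations
\[
\sum_{\beta}g(\beta)\,(1-\beta^2)^{j/2}\,S_\beta(\psi)=0\qquad\text{valid for every }g\text{ with }\deg g\le\tau-j,
\]
the sum running over the latitudes $\beta\neq\pm1$ occurring between $x$ and $C$.

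Finally I would solve this system for $S_\alpha(\psi)$ by interpolation. Two points of $C$ both at inner product $\alpha$ with $x$ can only have mutual inner product $\beta$ with $(\beta-\alpha^2)/(1-\alpha^2)\in[-1,1)$, and there are exactly $|I(C_{\alpha,x})|$ of the resulting derived values; choosing $g$ to equal $1$ at $\alpha$ and $0$ at the other such $\beta$ --- a polynomial of degree $|I(C_{\alpha,x})|-1$ --- collapses the displayed identity to $(1-\alpha^2)^{j/2}S_\alpha(\psi)=0$, hence $S_\alpha(\psi)=0$, whenever $|I(C_{\alpha,x})|-1\le\tau-j$, i.e. $j\le\tau+1-|I(C_{\alpha,x})|$; ranging over all admissible $j$ and all $\psi$ of degree $j$ then makes $C_{\alpha,x}$ a $(\tau+1-|I(C_{\alpha,x})|)$-design. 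The delicate point --- which I expect to be the main obstacle --- is that the relation also involves the terms $S_\beta(\psi)$ coming from latitudes $\beta$ with $(\beta-\alpha^2)/(1-\alpha^2)\notin[-1,1)$, which that choice of $g$ does not kill; one must argue that these contribute nothing to the relations in the relevant degree range, so that the interpolating $g$ genuinely needs to be controlled at only $|I(C_{\alpha,x})|$ latitudes rather than at all of them (the crude count gives merely $\tau+1-|I(C)\setminus\{-1\}|$). This is where the design constraints on $C$ must be used in full; if one is willing to additionally assume $\tau\ge|I(C)|-1$, then $C$ is distance-invariant by Theorem~\ref{thm:DGSdistances}, which streamlines the bookkeeping. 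Everything else is routine Gegenbauer/harmonic manipulation.
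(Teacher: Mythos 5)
Your latitude decomposition is a sound and standard mechanism: writing $p=\beta x+\sqrt{1-\beta^2}\,w_p$, testing the $\tau$-design identity against polynomials $p\mapsto g(x\cdot p)\,\psi(p-(x\cdot p)x)$, and grouping by latitude does yield the relations $\sum_\beta g(\beta)(1-\beta^2)^{j/2}S_\beta(\psi)=0$ for all $g$ with $\deg g\le\tau-j$. But the interpolation step does not close, for exactly the reason you flag. In that relation $\beta$ runs over \emph{all} latitudes, i.e.\ over $I(C)\cap(-1,1)$, so to collapse it to $(1-\alpha^2)^{j/2}S_\alpha(\psi)=0$ you must force $g$ to vanish at \emph{every} latitude other than $\alpha$, which needs $\deg g\ge |I(C)\cap(-1,1)|-1$. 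The polynomial you build vanishes only at the latitudes $\beta$ with $(\beta-\alpha^2)/(1-\alpha^2)\in[-1,1)$, i.e.\ $\beta\ge 2\alpha^2-1$; any latitude $\beta\in I(C)$ with $-1<\beta<2\alpha^2-1$ survives, and there is no reason $g(\beta)(1-\beta^2)^{j/2}S_\beta(\psi)$ vanishes for those. Distance-invariance of $C$ does not supply a mechanism for this, and a dimension count shows the system of relations simply does not determine $S_\alpha(\psi)$ once the number of surviving unknowns exceeds $\tau-j+1$.

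What your argument actually proves, once the interpolation is done at all $\ell:=|I(C)\cap(-1,1)|$ latitudes, is that $C_{\alpha,x}$ is a $(\tau+1-\ell)$-design. That is precisely the content of Theorem~\ref{derived_codes_thm} in the restricted case $\widetilde x\in C$, not of Theorem~\ref{thm:DGSderived}. The two bounds are incomparable: when $\alpha$ is close to $0$, $|I(C_{\alpha,x})|$ can exceed $\ell$ and your statement is sharper (e.g.\ for $C_0$ in Section~4 the latitude count gives a $7$-design while $\tau+1-|I(C_0)|=6$); but when $\alpha$ is large enough that some $\beta\in I(C)$ satisfies $-1<\beta<2\alpha^2-1$, $|I(C_{\alpha,x})|<\ell$ and Theorem~\ref{thm:DGSderived} is strictly stronger than what your single-point latitude decomposition can reach. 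Proving the bound in terms of $|I(C_{\alpha,x})|$ needs an input that sees the pair structure of the derived code itself (e.g.\ the annihilator $a(s)=\prod_{\gamma\in I(C_{\alpha,x})}(s-\gamma)$ fed into a two-point summation over $C$), which is a genuinely different idea from your one-point harmonic expansion and is the missing ingredient here.
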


A similar construction and theorem hold for $x\not\in C$, which we state below.

\begin{thm}[Theorem 3.2 in~\cite{BBDHSS2025}]
\label{derived_codes_thm}
Let $C\subset \mathbb{S}^{n-1}$ be a spherical $\tau$-design. Suppose there is $\widetilde{x}\in\mathbb{S}^{n-1}$ and $\ell \leq \tau$ real numbers $-1<\alpha_1<\dots<\alpha_\ell<1$, such that for all $y\in C$, $y\cdot \widetilde{x} \in \{-1,\alpha_1,\dots,\alpha_\ell,1\}$. Then the derived 
codes $C_{\alpha_i,\widetilde{x}}$, $i=1,\dots,\ell$ are spherical $(\tau+1-\ell)$-designs on $\mathbb{S}^{n-2}$.
\end{thm}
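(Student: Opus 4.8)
The plan is to deduce the design property of each derived code from the $\tau$-design property of $C$ by testing the latter against a suitable family of spherical harmonics and then solving the resulting linear system. By applying a rotation we may assume $\widetilde{x}=e_n=(0,\dots,0,1)$. For $y\in C$ put $t=t(y):=y\cdot e_n\in\{-1,\alpha_1,\dots,\alpha_\ell,1\}$ and, when $t\neq\pm1$, let $\xi=\xi(y):=(y_1,\dots,y_{n-1})/\sqrt{1-t^2}\in\mathbb{S}^{n-2}$; thus, writing $C_{\alpha_i}:=C_{\alpha_i,\widetilde{x}}$ for brevity, $C_{\alpha_i}=\{\xi(y): y\in C,\ t(y)=\alpha_i\}$, and we may assume each $\alpha_i$ is actually attained (otherwise it can be deleted and $\ell$ decreased, which only strengthens the conclusion for the remaining values). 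Recalling that $C$ is a $\tau$-design if and only if $\sum_{y\in C}Y(y)=0$ for every spherical harmonic $Y$ of degree $k$ with $1\le k\le\tau$ (equivalently $M_k(C)=0$ for $k=1,\dots,\tau$; this passes to harmonics via the addition formula \cite{Sch42}), it suffices to show that $\sum_{\xi\in C_{\alpha_i}}Y(\xi)=0$ for all $i$ and every spherical harmonic $Y$ of degree $j$ on $\mathbb{S}^{n-2}$ with $1\le j\le\tau+1-\ell$.

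Fix such a $j$ and such a $Y=Y_j$, viewed as a homogeneous harmonic polynomial of degree $j$ in the variables $y_1,\dots,y_{n-1}$. I will use the classical decomposition of spherical harmonics along a fixed axis (the associated Gegenbauer functions): for every integer $k\ge j$ there is a polynomial $g^{j}_{k}$ of degree exactly $k-j$, with nonzero leading coefficient, such that the polynomial $\Phi_k(y):=g^{j}_{k}(y_n)\,Y_j(y_1,\dots,y_{n-1})$ agrees on $\mathbb{S}^{n-1}$ with a spherical harmonic of degree $k$; in the coordinates above, $\Phi_k(y)=g^{j}_{k}(t)\,(1-t^2)^{j/2}\,Y_j(\xi)$. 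Hence, since $C$ is a $\tau$-design, $\sum_{y\in C}\Phi_k(y)=0$ for every $k$ with $j\le k\le\tau$. The points $y\in C$ with $t(y)=\pm1$ contribute $0$, because $(1-t^2)^{j/2}=0$ there while $j\ge1$; writing $S_m:=\sum_{\xi\in C_{\alpha_m}}Y_j(\xi)$ we therefore obtain the homogeneous linear system
\[
\sum_{m=1}^{\ell} g^{j}_{k}(\alpha_m)\,(1-\alpha_m^2)^{j/2}\,S_m=0,\qquad k=j,j+1,\dots,\tau .
\]

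Since $j\le\tau+1-\ell$, this system has at least $\tau-j+1\ge\ell$ equations; keep the $\ell$ of them with $k=j,\dots,j+\ell-1$ (all $\le\tau$). Its coefficient matrix has entries $g^{j}_{k}(\alpha_m)(1-\alpha_m^2)^{j/2}$ with $k$ a row index and $m$ a column index. The factors $(1-\alpha_m^2)^{j/2}$ are nonzero because $-1<\alpha_m<1$, so it remains to check that the matrix with entries $g^{j}_{k}(\alpha_m)$ is nonsingular. This is a generalized Vandermonde matrix: the polynomials $g^{j}_{j},g^{j}_{j+1},\dots,g^{j}_{j+\ell-1}$ have the distinct degrees $0,1,\dots,\ell-1$, hence form a basis of the space of polynomials of degree at most $\ell-1$, and expressing them in the monomial basis writes this matrix as the product of an invertible triangular matrix and the ordinary Vandermonde matrix $\big(\alpha_m^{\,r}\big)_{0\le r\le\ell-1,\ 1\le m\le\ell}$, whose determinant $\prod_{1\le m<m'\le\ell}(\alpha_{m'}-\alpha_m)$ is nonzero. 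Thus $S_1=\dots=S_\ell=0$. Since $j$ and $Y_j$ were arbitrary within the indicated ranges, each $C_{\alpha_i}$ is a spherical $(\tau+1-\ell)$-design on $\mathbb{S}^{n-2}$.

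I expect the step requiring the most care to be the axial decomposition invoked above: that for every $k\ge j$ the function $g^{j}_{k}(y_n)\,Y_j(y_1,\dots,y_{n-1})$ restricts on $\mathbb{S}^{n-1}$ to a spherical harmonic of degree exactly $k$, with $\deg g^{j}_{k}=k-j$ and nonzero leading coefficient; it is precisely this exact-degree statement that makes the coefficient matrix a genuine generalized Vandermonde. This fact is standard (it is the higher-dimensional analogue of the associated Legendre functions and is exactly the ingredient behind classical derived-design results such as Theorem~\ref{thm:DGSderived}), but it is where the representation-theoretic input enters; everything after it is the elementary linear algebra displayed above.
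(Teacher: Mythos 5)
The paper does not prove this theorem; it imports it as Theorem~3.2 from the reference \cite{BBDHSS2025}, so there is no in-paper proof against which to compare. Your argument is correct and complete, and it is the natural route to the result. The key ingredient is the axial (associated Gegenbauer) decomposition of spherical harmonics: for a degree-$j$ spherical harmonic $Y_j$ on $\mathbb{S}^{n-2}$ and each $k\ge j$, the function $g^j_k(t)(1-t^2)^{j/2}Y_j(\xi)=g^j_k(y_n)Y_j(y_1,\dots,y_{n-1})$ is a degree-$k$ spherical harmonic on $\mathbb{S}^{n-1}$, with $\deg g^j_k=k-j$ exactly. You use the hypotheses precisely where they are needed: the levels $t=\pm1$ contribute nothing because $Y_j(0,\dots,0)=0$ for $j\ge1$ (and one only needs $j\ge1$ to test the design property of $C_{\alpha_i}$); the constraint $j\le\tau+1-\ell$ guarantees that $k=j,\dots,j+\ell-1$ all lie in $\{1,\dots,\tau\}$, giving $\ell$ homogeneous equations; and the generalized Vandermonde structure (exact degrees $0,1,\dots,\ell-1$ with nonzero leading coefficients, evaluated at the distinct nodes $\alpha_1<\dots<\alpha_\ell$, scaled by the nonzero factors $(1-\alpha_m^2)^{j/2}$) forces $S_1=\dots=S_\ell=0$. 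The reduction discarding unattained $\alpha_i$ only strengthens the conclusion, as you note. This is essentially the same mechanism that underlies Theorem~\ref{thm:DGSderived} (DGS Theorem 8.2), generalized from a pole $x\in C$ to an arbitrary $\widetilde{x}\in\mathbb{S}^{n-1}$.
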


If $C \subset \mathbb{S}^{n-1}$ is a distance invariant $\tau$-design, then for every $x \in \mathbb{S}^{n-1}$ and every polynomial $f$ of degree at most $\tau$ one may consider the polynomial $p(y)=f(x\cdot y)$ and apply \eqref{MultiPolQ} and the Funk--Hecke formula to conclude that
\[
N f_0 = \sum_{y \in C} f(x \cdot y)
\]
holds (see also Equation (1.10) in \cite{FL95}). When we select $x\in C$ we derive the existence of a quadrature formula with nodes $I(C)\cup\{1\}$ and weights $F(C)\cup \{1\}$: i.e. for every polynomial $f$ of degree at most $\tau$ one has 
\begin{equation} \label{QF}
N f_0 = \sum_{i=1}^{d+1} m_i f(\alpha_i), 
\end{equation}
where $m_i$ are the multiplicities from $F(C) \cup \{1\}$ and $\alpha_i$ are the corresponding nodes from $I(C) \cup \{1\}$, $\alpha_{d+1}=1$. Knowing the inner products, the quadrature rule \eqref{QF} allows us to compute the distance distribution of spherical designs of strength $\tau \geq d-1$ using the Lagrange basis induced by the nodes $I(C)\cup\{1\}$ (or using the polynomials $f(t)=1,t,\ldots,t^{d-1}$ for a Vandermonde system).

\section{General linear programming theorems}

Modifications of the classical linear programming (LP) bounds for spherical codes and designs \cite{DGS,KL} can be easily obtained. 
The proofs follow directly from the identity 
\begin{equation} \label{main-id}
f(1) \cdot |C|+\sum_{x,y \in C, x \neq y} f(\langle x,y \rangle)=
f_0 \cdot |C|^2 + \sum_{i=1}^{\deg{f}} f_iM_i(C)
\end{equation}
(here $f(t)$ has Gegenbauer expansion \eqref{geg-exp}  
and $C \subset \mathbb{S}^{n-1}$ is a code) and the corresponding conditions in the theorems. The
identity \eqref{main-id} itself is obtained by computing both 
sides of \eqref{geg-exp} in all inner products (counting the multiplicities) of points $C$ and summing.

We proceed with definitions of the target quantities for Problems 1, 2, and 3 and the corresponding linear programming theorems for each of them. 

\subsection{Maximal cardinality of \texorpdfstring{$T$}{T}-avoiding \texorpdfstring{$s$}{s}-codes}

Denote by 
\[ 
\mathcal{A}(n,s;T):= \max \left\{ |C|: C \subset \mathbb{S}^{n-1} \mbox{ is a $T$-avoiding spherical code}, \ s(C) \leq s \right\} 
\]
the maximal cardinality of a $T$-avoiding spherical code with maximal inner product at most $s$. We can assume without loss of generality that $T \subset [-1,s)$.  

\begin{thm}[LP for unrestricted codes~\cite{DGS}] \label{general-lp-max-codes}
 Let $T \subset [-1,s) $ and $f(t) \in \mathbb{R}[t]$ be such that

{\rm (A1)} $f(t) \leq 0$ for all $t \in [-1,s] \setminus T$;

{\rm (A2)} $f_0>0$ and $f_i \geq 0$ for all $i$ in the Gegenbauer expansion $f(t)=\sum_{i=0}^{\deg{f}} f_iP_i^{(n)}(t)$.

Then 
\[ |C| \leq \frac{f(1)}{f_0} \]
for any spherical code $C \subset \mathbb{S}^{n-1}$ such that $I(C) \subset ([-1,s] \setminus T)$. Consequently,
\[ \mathcal{A}(n,s;T) \leq \inf \left\{ \frac{f(1)}{f_0} : f \mbox{ satisfies (A1) and (A2)} \right\}.  \]
If a $T$-avoiding spherical code $C \subset \mathbb{S}^{n-1}$ with $s(C) \leq s$ and a polynomial $f$ satisfying (A1) and (A2) 
are such that $|C|=f(1)/f_0$, then $I(C)$ is a subset of the set of the zeros of $f$ and $f_iM_i(C)=0$ for each $i \geq 1$. In particular, if $f_i>0$ for
$i=1,2,\ldots,m$ for some $m \leq \deg{f}$, then $C$ is a ($T$-avoiding)
spherical $m$-design. 
\end{thm}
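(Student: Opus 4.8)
The plan is to apply the identity \eqref{main-id} to the given polynomial $f$ and read off the bound from the sign conditions (A1) and (A2). First I would take an arbitrary $T$-avoiding spherical code $C \subset \mathbb{S}^{n-1}$ with $s(C) \leq s$, so that $I(C) \subset [-1,s]\setminus T$. For each ordered pair $(x,y)$ with $x \neq y$ the inner product $\langle x,y\rangle$ lies in $[-1,s]\setminus T$, hence $f(\langle x,y\rangle)\leq 0$ by (A1); summing over all such pairs gives $\sum_{x\neq y\in C} f(\langle x,y\rangle)\leq 0$. On the right-hand side of \eqref{main-id}, positive definiteness of the Gegenbauer polynomials yields $M_i(C)\geq 0$ for all $i$, and combined with $f_i\geq 0$ from (A2) we get $\sum_{i=1}^{\deg f} f_i M_i(C)\geq 0$. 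Substituting these two estimates into \eqref{main-id} gives $f(1)\,|C| \geq f_0\,|C|^2$, and since $f_0>0$ and $|C|\geq 1$ (the empty code being trivial), dividing by $f_0|C|$ yields $|C|\leq f(1)/f_0$. Taking the infimum over all $f$ satisfying (A1) and (A2) then gives the stated upper bound on $\mathcal{A}(n,s;T)$.

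For the characterization of equality, suppose $|C| = f(1)/f_0$ for some admissible $f$. Then every inequality used above must be an equality. In particular $\sum_{x\neq y\in C} f(\langle x,y\rangle) = 0$; since each summand is $\leq 0$, each one vanishes, i.e.\ $f(\langle x,y\rangle)=0$ for all distinct $x,y\in C$, which says precisely that $I(C)$ is contained in the zero set of $f$. Likewise $\sum_{i=1}^{\deg f} f_i M_i(C) = 0$ with all terms nonnegative forces $f_i M_i(C) = 0$ for every $i\geq 1$. If moreover $f_i>0$ for $i=1,\dots,m$, then $M_i(C)=0$ for those $i$, and by the criterion recalled in Section 2 ($C$ is a spherical $\tau$-design if and only if $M_i(C)=0$ for $i=1,\dots,\tau$) the code $C$ is a spherical $m$-design, and it is $T$-avoiding by hypothesis.

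The argument is entirely routine given \eqref{main-id} and the positive definiteness of $P_i^{(n)}$; the only point needing care is that the sign condition (A1) is imposed merely on $[-1,s]\setminus T$ rather than on all of $[-1,1]$, which is exactly what makes the bound useful here — values or zeros of $f$ inside the forbidden window $T$ are harmless, since no inner product of a $T$-avoiding code can land there. I do not anticipate a genuine obstacle in the proof itself; the substantive work is deferred to the later sections, where one must construct explicit polynomials $f$ meeting (A1) and (A2) that are tight for the specific codes under consideration.
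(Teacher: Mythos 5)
Your argument is correct and is exactly the one the paper intends: the authors explicitly say the proof "follows directly from the identity \eqref{main-id} and the corresponding conditions in the theorems," and you have simply spelled out that routine — bounding the left side of \eqref{main-id} by (A1), the right side by (A2) together with positive definiteness of $P_i^{(n)}$, and then tracking when each inequality is tight. Nothing is missing and nothing is different in substance from the paper's (abbreviated) proof.
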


\subsection{Minimum cardinality of \texorpdfstring{$T$}{T}-avoiding designs} 

Denote by 
\[ \mathcal{B}(n,\tau;T):= \min \left\{ |C|: C \subset \mathbb{S}^{n-1} \mbox{ is a $T$-avoiding spherical $\tau$-design} \right\} \]
the minimum cardinality of a $T$-avoiding spherical $\tau$-design. 

\begin{thm}[LP for spherical designs~\cite{DGS}] \label{general-lp-des}
Let $\tau$ be a positive integer, $T \subset [-1,1)$, and $f(t) \in \mathbb{R}[t]$ be such that

{\rm (B1)} $f(t) \geq 0$ for all $t \in [-1,1] \setminus T$;

{\rm (B2)} $f_0>0$, $f_i \leq 0$ for all $i \geq \tau+1$ in the Gegenbauer expansion $f(t)=\sum_{i=0}^{\deg{f}} f_iP_i^{(n)}(t)$.

Then 
\[ |C| \geq \frac{f(1)}{f_0} \]
for any spherical $\tau$-design $C \subset \mathbb{S}^{n-1}$ such that $I(C) \subset ([-1,1] \setminus T)$. Consequently,
\begin{equation} \label{lp-des-T}
\mathcal{B}(n,\tau;T) \geq \sup \left\{ \frac{f(1)}{f_0} : f \mbox{ satisfies (B1) and (B2)} \right\}. 
\end{equation}
If a $T$-avoiding spherical $\tau$-design $C \subset \mathbb{S}^{n-1}$ and a polynomial $f$ satisfying (B1) and (B2) are such that
$|C|=f(1)/f_0$, then $f_iM_i(C)=0$ for each $i \geq \tau+1$ and
$I(C)$ is a subset of the set of the zeros of $f$.
\end{thm}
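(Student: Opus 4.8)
The plan is to prove Theorem~\ref{general-lp-des} by a direct application of the basic identity \eqref{main-id} together with the positive-definiteness of the Gegenbauer polynomials. First I would take a $T$-avoiding spherical $\tau$-design $C\subset\mathbb{S}^{n-1}$ and a polynomial $f$ satisfying (B1) and (B2), and write out \eqref{main-id}:
\[
f(1)\cdot|C|+\sum_{x,y\in C,\ x\neq y} f(\langle x,y\rangle)=f_0\cdot|C|^2+\sum_{i=1}^{\deg f} f_iM_i(C).
\]
The left-hand side I would bound from below: since $C$ is $T$-avoiding with $I(C)\subset[-1,1]\setminus T$, every inner product $\langle x,y\rangle$ with $x\neq y$ lies in $[-1,1]\setminus T$, where (B1) gives $f(\langle x,y\rangle)\geq 0$; hence the whole double sum is nonnegative and the left side is at least $f(1)\cdot|C|$. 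For the right-hand side I would split the $M$-sum at $\tau$: because $C$ is a $\tau$-design, $M_i(C)=0$ for $1\le i\le\tau$, so only the terms with $i\geq\tau+1$ survive; by (B2) each such $f_i\le 0$, and by positive-definiteness each $M_i(C)\ge 0$, so $f_iM_i(C)\le 0$ and the right side is at most $f_0\cdot|C|^2$.

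Combining the two estimates yields $f(1)\cdot|C|\le f_0\cdot|C|^2$, and since $|C|>0$ we may divide by $|C|$; noting $f_0>0$ (which follows from, e.g., evaluating the quadrature/design identity, or more simply from $f_0=\int f\,d\mu_n\ge 0$ together with $f$ not being identically zero on the support — in fact one only needs $f_0>0$ for the conclusion to be meaningful, and it is implicit in (B2) allowing $f_0$ to be the single nonnegative coefficient) we obtain $|C|\ge f(1)/f_0$. Taking the supremum over all admissible $f$ gives \eqref{lp-des-T}.

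For the equality characterization, I would retrace the chain of inequalities and force equality at each stage. Equality in $f(1)\cdot|C|\le f_0\cdot|C|^2$ forces equality in both of the intermediate bounds. Equality on the left forces $\sum_{x\neq y}f(\langle x,y\rangle)=0$; since each summand is nonnegative by (B1), every term vanishes, i.e.\ $f(t)=0$ for all $t\in I(C)$, so $I(C)$ is contained in the zero set of $f$. Equality on the right forces $\sum_{i\ge\tau+1}f_iM_i(C)=0$; since each term $f_iM_i(C)\le 0$, every term vanishes, i.e.\ $f_iM_i(C)=0$ for each $i\ge\tau+1$.

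I do not expect a genuine obstacle here: the result is the standard Delsarte--Goethals--Seidel linear programming bound for designs, transcribed verbatim except that the sign condition (B1) is relaxed from $[-1,1]$ to $[-1,1]\setminus T$, which is exactly compensated by restricting attention to codes with $I(C)\subset[-1,1]\setminus T$ — so the proof is a line-by-line repetition. The only point requiring a word of care is the nonnegativity of the off-diagonal sum on the left, where one must invoke the $T$-avoiding hypothesis (rather than (B1) alone, which no longer holds on all of $[-1,1]$); and, symmetrically, one must make sure the design condition $M_i(C)=0$ for $i\le\tau$ is used before applying (B2). Neither is subtle. The substantive content of the paper lies not in this general theorem but in the later construction of explicit polynomials $f$ and matching codes $C$ attaining the bound.
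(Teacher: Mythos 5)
Your proof is correct and follows exactly the approach the paper intends: the paper itself does not spell out the argument, remarking only that "the proofs follow directly from the identity \eqref{main-id} and the corresponding conditions in the theorems," and your write-up is precisely that standard Delsarte--Goethals--Seidel chain of estimates, with the $T$-avoiding hypothesis correctly replacing the global sign condition on the off-diagonal sum. One small quibble: your parenthetical justification of $f_0>0$ is not quite right as written (since $f$ may be negative on $T$, one cannot conclude $f_0=\int f\,d\mu_n\ge 0$ in general); the clean observation is that the argument directly yields $f(1)\le f_0|C|$, and since $f(1)\ge 0$ by (B1), the case $f_0<0$ is vacuous (no such $C$ exists) while $f_0>0$ — which holds for every polynomial actually used — gives the stated bound, so the hypothesis $f_0>0$ is harmlessly implicit.
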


\begin{definition}
A $T$-avoiding spherical $\tau$-design $C \subset \mathbb{S}^{n-1}$ which attains the bound \eqref{lp-des-T} for some polynomial $f$ is called a tight $T$-avoiding spherical $\tau$-design.
\end{definition}

We remark that all polynomials for Theorem \ref{general-lp-des} in this paper will be of degree at most $\tau$ which means that  we have to check only $f_0>0$ in the condition (B2) and the usually easy condition (B1). 

\subsection{Minimum energy of \texorpdfstring{$T$}{T}-avoiding codes}

Denote by 
\[ \mathcal{E}_h(n,N;T):= \min \left\{ E_h(C): C \subset \mathbb{S}^{n-1} \mbox{ is a $T$-avoiding spherical code}, \ |C|=N \right\} \]
the minimum $h$-energy of a $T$-avoiding spherical code on $\mathbb{S}^{n-1}$ of cardinality $|C|=N$.

\begin{thm}[LP for energy~\cite{BD2025}]  \label{general-lp-energy} 
Let $h(t)$ be a potential function and $f \in \mathbb{R}[t]$ be such that

{\rm (E1)} $f(t) \leq h(t)$ for every $t \in [-1,1)\setminus T$;

{\rm (E2)} $f_i \geq 0$ for all $i \geq 1$. 

Then 
\[
 E_h(C) \geq N^2 \left( f_0-\frac{f(1)}{N}\right)
\]
for every $T$-avoiding spherical code $C \subset \mathbb{S}^{n-1}$ of cardinality $|C|=N$. Consequently,
\[
\mathcal{E}_h(n,N;T) \geq \sup \left\{ N^2\left( f_0-\frac{f(1)}{N}\right): f \mbox{ satisfies (E1) and (E2)} \right\}.
\]
If a $T$-avoiding spherical code $C \subset \mathbb{S}^{n-1}$ and a polynomial $f$ satisfying (E1) and (E2) are such that
$E_h(C)=N^2 \left( f_0-f(1)/N\right)$, then $f_iM_i(C)=0$ for each $i \geq 1$ and $I(C)$ is a subset of the set of the zeros of the function $h-f$.
\end{thm}

\subsection{Examples in 48 dimensions}

We present as examples recent results on $T$-avoiding (1/2)-codes in 48 dimensions from \cite{BC2025} and \cite{BD2025}. 

Let $T_1:=\left(-1/3,-1/6\right) \cup \left(1/6,1/3\right)$ and $T_2 := \left(-1/2,-1/3\right) \cup \left(1/3,1/2\right)$.

\begin{thm}[Theorem 5.1 in \cite{BC2025}] \label{kiss-48}
Let $C \subset \mathbb{S}^{47}$ be a $T_1$-avoiding spherical $(1/2)$-code with $M_3(C)=0$.
Then $|C| \leq 52\,416\,000$. If the equality is attained, then $C$ is an antipodal spherical 11-design which is distance invariant and 
its distance distribution is as given in~\eqref{eq:distdist1} and~\eqref{eq:distdist2}.
\end{thm}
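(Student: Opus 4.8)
The plan is to apply the linear programming bound of Theorem~\ref{general-lp-max-codes} with $n=48$, $s=1/2$, and $T=T_1=(-1/3,-1/6)\cup(1/6,1/3)$. Since $C$ is $T_1$-avoiding and $1/2$-code, we have $I(C)\subset[-1,1/2]\setminus T_1=[-1,-1/3]\cup[-1/6,1/6]\cup\{1/2\}$. (Strictly, one should also split off whether $-1\in I(C)$; the antipodality in the conclusion suggests the extremal configuration has $-1\in I(C)$.) We must produce a polynomial $f$ satisfying (A1) $f(t)\le 0$ on $[-1,1/2]\setminus T_1$ and (A2) $f_0>0$, $f_i\ge 0$ for all $i$, such that $f(1)/f_0=52\,416\,000$. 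The natural construction, following the Levenshtein/Cohn--Kumar paradigm, is a Hermite-type interpolant: prescribe the zero set of $f$ to be exactly the set of inner products that the putative extremal code attains, with double zeros at the interior points of the allowed arcs and simple zeros at the endpoints $1/2$ and (if relevant) $-1$. Concretely, the extremal code here should be a $48$-dimensional antipodal $11$-design with inner products inside $\{-1\}\cup[-1/3,-1/6]\cup[-1/6,1/6]$; guessing these from known lattice configurations (the minimal vectors of the four extremal even unimodular lattices in dimension $48$, scaled), one writes $f(t)=(t-1/2)\prod (t-\alpha_j)^{2}$ or a product incorporating $(t+1)$, with the $\alpha_j$ among $\{-1/3,-1/6,1/6\}$ and possibly an extra simple-zero factor, chosen so that $\deg f\le 11$ (forcing the design strength $11$) and so that all Gegenbauer coefficients $f_i^{(48)}$ come out nonnegative.

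The key steps, in order, are: (i) identify the candidate inner product set $I(C)=\{-1,-1/3,-1/6,1/6\}$ (and hence $d=4$, matching the arcs of $[-1,1/2]\setminus T_1$) by examining the minimal vectors of the relevant $48$-dimensional lattices, and verify $f(1)/f_0=52\,416\,000$ with $N=52\,416\,000$; (ii) build $f$ of degree $\le 11$ with the prescribed zero structure — this will be, up to normalization, $f(t)=(1+t)(t-1/2)\bigl(t+1/3\bigr)^2\bigl(t+1/6\bigr)^2\bigl(t-1/6\bigr)^2\cdot(\text{linear factor})$ or a close variant, arranged so that $\deg f=11$; (iii) verify (A1), i.e.\ that $f\le 0$ on the three allowed arcs — the sign pattern: on $[-1,-1/3]$, on $[-1/6,1/6]$, and nonpositivity at the isolated point $1/2$; (iv) compute the Gegenbauer expansion $f(t)=\sum_i f_i P_i^{(48)}(t)$ via \eqref{eq:orthorep} and check $f_0>0$ and $f_i\ge 0$ for $i=1,\dots,11$; (v) since $f_i>0$ is expected for $i=1,\dots,11$ (in particular $f_3>0$, which is where the hypothesis $M_3(C)=0$ is used to keep the identity \eqref{main-id} saturated — one invokes the linear programming complementary slackness $f_iM_i(C)=0$), conclude from the equality case of Theorem~\ref{general-lp-max-codes} that $C$ is an $11$-design; (vi) apply Theorem~\ref{thm:DGSdistances} with $\tau=11\ge d-1=3$ to deduce $C$ is distance invariant, and then use the quadrature rule \eqref{QF} (equivalently the Vandermonde/moment system $M_i(C)=0$, $i=1,\dots,11$, together with the known $I(C)$) to solve for the distance distribution $F(C)$, recovering \eqref{eq:distdist1}--\eqref{eq:distdist2}; antipodality ($A_{-1}=1$) follows since the extremal $f$ has a simple zero at $-1$ forcing $-1\in I(C)$ with multiplicity $1$.

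The main obstacle I expect is step~(ii)--(iv): pinning down the exact polynomial $f$ of degree $11$ (the choice of the free linear or constant factor, and possibly whether one needs a degree drop or a specific deformation of the naive Hermite interpolant) and then \emph{proving} that all its Gegenbauer coefficients in dimension $48$ are nonnegative. Gegenbauer nonnegativity for an explicit polynomial of degree $11$ is in principle a finite computation, but making it clean — rather than a brute numerical check — typically requires exploiting a product or convolution structure, or a known positive-definiteness lemma; finding the right $f$ so that this works (and so that $f(1)/f_0$ hits exactly $52\,416\,000$) is the crux. A secondary subtlety is the role of the extra hypothesis $M_3(C)=0$: it is there precisely because the unconstrained LP polynomial might have $f_3=0$ (so that $P_3$-mass is unconstrained), and assuming $M_3(C)=0$ lets us use a polynomial that need not vanish in the $3$rd coefficient while still forcing the $11$-design conclusion; handling this bookkeeping carefully is needed for the equality analysis. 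Once $f$ is in hand, steps (v)--(vi) are routine applications of the theorems quoted in Section~2.
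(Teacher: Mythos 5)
The proposal is conceptually on track — linear programming via Theorem~\ref{general-lp-max-codes} with a Hermite-type polynomial, complementary slackness for the equality analysis, Theorem~\ref{thm:DGSdistances} for distance invariance — but it gets the basic combinatorial data wrong, which would derail the construction before it starts.

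First, $[-1,1/2]\setminus T_1$ with $T_1=(-1/3,-1/6)\cup(1/6,1/3)$ is $[-1,-1/3]\cup[-1/6,1/6]\cup[1/3,1/2]$, i.e.\ \emph{three closed arcs}, not ``two arcs plus the isolated point $1/2$.'' Second, the extremal inner product set is not something you need to guess from lattice lore: the paper states it explicitly in \eqref{eq:distdist1} as $I(C)=\{-1,-1/2,-1/3,-1/6,0,1/6,1/3,1/2\}$, so $d=8$, not $4$, and in particular $1/3$ and $1/2$ are both realized. Your candidate $I(C)=\{-1,-1/3,-1/6,1/6\}$ is incompatible with the stated distance distribution \eqref{eq:distdist2}, which has eight nonzero entries.

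This changes the polynomial's zero structure substantially. Forcing $f\le 0$ on each of the three arcs and $f=0$ exactly at $I(C)$ requires simple zeros at the arc endpoints $-1,-1/3,-1/6,1/6,1/3,1/2$ and \emph{double} zeros at the interior points $-1/2$ and $0$, giving degree $10$ rather than the degree-$8$-or-$9$ product you wrote. With $\deg f=10$ and $f_i>0$ for $i\in\{1,\dots,10\}\setminus\{3\}$, equality forces $M_i(C)=0$ for those $i$; together with the hypothesis $M_3(C)=0$ this yields a $10$-design, and antipodality (forced by the simple zero at $-1$) then gives $M_{11}(C)=0$, hence an $11$-design. Your step~(v) reasoning is also internally inconsistent: if $f_3>0$ the hypothesis $M_3(C)=0$ would be redundant; the hypothesis is in the theorem precisely because $f_3$ is \emph{not} strictly positive for this polynomial. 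Finally, note that the paper does not prove this theorem itself but cites it from \cite{BC2025}, so the comparison is against that reference rather than an in-text argument; still, the approach there is the LP one you outline, and the concrete errors above (wrong arc decomposition, wrong $I(C)$, wrong polynomial degree) would have to be fixed before any of the later steps could go through.
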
 

\begin{thm}[Theorems 6.1 and 6.2 in \cite{BC2025}] \label{11des-48}
Let $C \subset \mathbb{S}^{47}$ be a $T_i$-avoiding spherical $11$-design for either $i=1$ or $i=2$. Then
$|C| \geq 52\,416\,000$. If the equality is attained, then $C$ is a $(48,52\,416\,000,1/2)$ antipodal spherical code which is distance invariant 
with distance distribution as in~\eqref{eq:distdist1} and~\eqref{eq:distdist2}.
\end{thm}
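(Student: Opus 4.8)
We deduce the theorem from the linear programming bound for $T$-avoiding designs, Theorem~\ref{general-lp-des}, applied in dimension $n=48$, strength $\tau=11$, to an explicit degree-$11$ polynomial in each of the two cases. For $i=1$ put
\[
f_1(t)=c_1\,(t+1)\,t^2\Bigl(t^2-\tfrac19\Bigr)\Bigl(t^2-\tfrac1{36}\Bigr)\Bigl(t^2-\tfrac14\Bigr)^2,
\]
and for $i=2$ put
\[
f_2(t)=c_2\,(t+1)\,t^2\Bigl(t^2-\tfrac14\Bigr)\Bigl(t^2-\tfrac19\Bigr)\Bigl(t^2-\tfrac1{36}\Bigr)^2,
\]
with positive normalizing constants $c_1,c_2$. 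Each $f_i$ has degree $11=\tau$, so condition (B2) is automatic. Its roots in $[-1,1)$ are exactly the set $R:=\{-1,-1/2,-1/3,-1/6,0,1/6,1/3,1/2\}$ of inner products of the extremal codes of Theorem~\ref{kiss-48}; the double roots are the three elements of $R$ lying in the interior of a connected component of $[-1,1]\setminus T_i$ — namely $-1/2,0,1/2$ for $i=1$ and $-1/6,0,1/6$ for $i=2$ — while the remaining five are simple roots, among them $-1$ and the endpoints of the components adjacent to $T_i$. A short sign check on each of the three intervals comprising $[-1,1]\setminus T_i$ then gives $f_i\ge 0$ there, which is (B1).

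To evaluate the bound I would compute $f_i(1)$ and the Gegenbauer coefficient $(f_i)_0$ from \eqref{eq:orthorep} and check $(f_i)_0>0$ and $f_i(1)/(f_i)_0=52\,416\,000$. Equivalently and more conceptually, feed the known $(48,52\,416\,000,1/2)$ antipodal $11$-design $C^\ast$ from Theorem~\ref{kiss-48} into the identity \eqref{main-id}: since $M_j(C^\ast)=0$ for $1\le j\le 11=\deg f_i$ and $f_i$ vanishes on $I(C^\ast)=R$, \eqref{main-id} collapses to $f_i(1)\,|C^\ast|=(f_i)_0\,|C^\ast|^2$, whence $f_i(1)/(f_i)_0=|C^\ast|=52\,416\,000$. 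By Theorem~\ref{general-lp-des} this yields $\mathcal B(48,11;T_i)\ge 52\,416\,000$ for $i=1,2$.

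For the equality case, suppose $C$ is a $T_i$-avoiding spherical $11$-design with $|C|=52\,416\,000$. Plugging $f_i$ into \eqref{main-id}, the design property together with $\deg f_i=11$ annihilates the right-hand Gegenbauer sum, so $\sum_{x\ne y\in C}f_i(\langle x,y\rangle)=(f_i)_0|C|^2-f_i(1)|C|=0$; since $I(C)\subset[-1,1]\setminus T_i$, where $f_i\ge 0$, every summand vanishes, i.e. $I(C)\subseteq R$. In particular $d:=|I(C)|\le 8$, so $\tau=11\ge d-1$ and Theorem~\ref{thm:DGSdistances} makes $C$ distance invariant; its distance distribution is then determined by the quadrature rule \eqref{QF} on the nodes $I(C)\cup\{1\}$, exact for all polynomials of degree $\le 11$. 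It remains to rule out every proper subset $S\subsetneq R$ as a possible $I(C)$: for each candidate one solves the Vandermonde system obtained by applying \eqref{QF} to $1,t,\dots,t^{|S|-1}$ for the multiplicities $(A_t)_{t\in S}$, and then checks that the surviving instances of \eqref{QF} (degrees $|S|,\dots,11$), together with the requirement that the $A_t$ be nonnegative integers, are inconsistent unless $S=R$. In the case $S=R$ the unique solution is the distribution recorded in \eqref{eq:distdist1}--\eqref{eq:distdist2}, one has $s(C)=\max I(C)=1/2$, and, since $-1\in I(C)$, $C=-C$; thus equality forces exactly the asserted antipodal distance-invariant $(48,52\,416\,000,1/2)$ code.

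The main obstacle is this last, finite case analysis: proving that no $11$-design of cardinality $52\,416\,000$ can have fewer than eight inner products among $R$, i.e. that the over-determined moment/quadrature system is infeasible for every proper subset, so that $d=8$ and antipodality are forced. The auxiliary identity $f_i(1)/(f_i)_0=52\,416\,000$ is routine once the polynomials $P^{(48)}_j$ are written out explicitly, or it can be bypassed entirely via the collapse of \eqref{main-id} on $C^\ast$ described above.
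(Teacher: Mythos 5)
Your lower-bound argument is correct and complete: the two degree-$11$ polynomials have the right roots (simple at the endpoints of the $T_i$-components and at $-1$, double at the interior nodes $-\tfrac12,0,\tfrac12$ resp.\ $-\tfrac16,0,\tfrac16$), your sign check gives (B1), degree $\leq\tau=11$ gives (B2) for free, and feeding the known extremal-lattice code $C^\ast$ into \eqref{main-id} to get $f_i(1)/(f_i)_0=|C^\ast|$ (and hence $(f_i)_0>0$) is a legitimate and elegant shortcut that avoids the explicit Gegenbauer computation entirely. This matches the shape of the Section~6 arguments in this paper (compare Theorem~\ref{47104-des-t1}): a polynomial vanishing on the target $I(C)$, with a double root at each node interior to a component of $[-1,1]\setminus T$. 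The theorem itself is imported from \cite{BC2025}, so there is no in-paper proof to compare against line by line, but your construction is the natural one.

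The one place where you overestimate the difficulty is the equality analysis. You flag the ``main obstacle'' as an enumeration over all $255$ proper subsets $S\subsetneq R$, but no such casework is needed. Once you know $I(C)\subseteq R$ and $C$ is distance invariant (which you correctly get from Theorem~\ref{thm:DGSdistances} since $d\leq 8\leq \tau+1$), apply \eqref{def-des} with $f=P_j^{(48)}$, $j=0,\dots,7$, and a fixed $x\in C$, writing $A_t(x)=0$ for $t\in R\setminus I(C)$. This gives the linear system
\[
\sum_{t\in R}A_t(x)\,P_j^{(48)}(t)=N\delta_{j,0}-1,\qquad j=0,\dots,7,
\]
in the $8$ unknowns $(A_t(x))_{t\in R}$. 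The coefficient matrix is nonsingular (evaluation of a basis of degree-$\leq 7$ polynomials at $8$ distinct points), so the solution is unique; by the existence of $C^\ast$ it must be the strictly positive distribution \eqref{eq:distdist2}. Strict positivity then forces $I(C)=R$, hence $-1\in I(C)$ (antipodality), $s(C)=\max I(C)=\tfrac12$, and the stated distance distribution --- no infeasibility checks over proper subsets and no integrality arguments are required. So your proof is sound and essentially complete; the missing step you identified is not a hole to be filled by brute force but a one-line linear-algebra observation.
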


There are at least four spherical 11-designs on $\mathbb{S}^{47}$, formed as the sets of minimal vectors of the even unimodular extremal lattices $P_{48p}, P_{48q}, P_{48m}$, and $P_{48n}$
in $\mathbb{R}^{48}$ (see \cite{CS,nebe2014fourth}). All these codes are distance invariant with 8 distinct distances (cf., e.g., via \cite[Theorem 7.4]{DGS}) and all have the same inner products
\begin{equation} \label{eq:distdist1}
I(C) = \left\{-1, -\frac{1}{2}, -\frac{1}{3}, -\frac{1}{6}, 0, \frac{1}{6}, \frac{1}{3}, \frac{1}{2}  \right\} 
\end{equation}
and distance distribution 
\begin{equation} \label{eq:distdist2}
F(C)=(1, 36\,848, 1\,678\,887, 12\,608\,784, 23\,766\,960, 12\,608\,784, 1\,678\,887, 36\,848). 
\end{equation}

It follows from Theorem \ref{11des-48} that these codes are tight $T_i$-avoiding spherical $11$-designs either for $i=1$ or $2$.  

The $h$-energy of each of the above four codes (say, $C$) is given by
\begin{eqnarray*} 
\frac{E_h(C)}{52416000} &=& 36848\left(h\left(-\frac{1}{2}\right)+h\left(\frac{1}{2}\right)\right) + 1678887\left(h\left(-\frac{1}{3}\right)+h\left(\frac{1}{3}\right)\right) \nonumber \\ 
&& \, +12608784\left(h\left(-\frac{1}{6}\right)+h\left(\frac{1}{6}\right)\right)+23766960h(0)+h(-1).
\end{eqnarray*}

\begin{thm} \cite{BD2025} \label{ulb-48}
Let $h$ be absolutely monotone with $h^{(12)}>0$ in $(-1,1)$. 
Let $C \subset \mathbb{S}^{47}$ of cardinality $|C|=52\,416\,000$ be either $T_1$-avoiding with $M_3(C)=0$ or $T_2$-avoiding. Then 
\begin{equation} \label{ulb-h-energy-48}
\begin{split}
\frac{E_h(C)}{52416000} &\geq 36848\left(h\left(-\frac{1}{2}\right)+h\left(\frac{1}{2}\right)\right) + 1678887\left(h\left(-\frac{1}{3}\right)+h\left(\frac{1}{3}\right)\right) \\ 
& \ \ \ \, +12608784\left(h\left(-\frac{1}{6}\right)+h\left(\frac{1}{6}\right)\right)+23766960h(0)+h(-1).
\end{split}
\end{equation}
The equality is attained when $C$ is an antipodal spherical 11-design 
which is distance invariant and 
its distance distribution is as given in~\eqref{eq:distdist1} and~\eqref{eq:distdist2}. In particular, the four codes formed by the minimum norm vectors in the  
even unimodular extremal lattices $P_{48p}, P_{48q}, P_{48m}$, and $P_{48n}$ in $\mathbb{R}^{48}$, respectively, attain the bound \eqref{ulb-h-energy-48} and hence, are universally optimal among the considered class of codes.
\end{thm}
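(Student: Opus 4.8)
The plan is to apply the energy linear programming bound, Theorem~\ref{general-lp-energy}, with an explicit polynomial $f$ of degree $11$ obtained by Hermite interpolation of $h$ at the prescribed inner products \eqref{eq:distdist1}. Write $I:=\{-1,-\tfrac12,-\tfrac13,-\tfrac16,0,\tfrac16,\tfrac13,\tfrac12\}$ and $U_i:=[-1,1]\setminus T_i$, so that $U_1=[-1,-\tfrac13]\cup[-\tfrac16,\tfrac16]\cup[\tfrac13,1]$ and $U_2=[-1,-\tfrac12]\cup[-\tfrac13,\tfrac13]\cup[\tfrac12,1]$. Exactly three points of $I$ lie in the interior of $U_i$ (namely $\pm\tfrac12,0$ for $i=1$ and $\pm\tfrac16,0$ for $i=2$), while the remaining five, including $-1$, sit on the boundary of $U_i$. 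I would take $f=f^{(i)}$ to be the unique polynomial of degree $11$ with $f(\alpha)=h(\alpha)$ and $f'(\alpha)=h'(\alpha)$ at $\alpha=-1$ and at the three interior points, and $f(\alpha)=h(\alpha)$ at the four remaining boundary points of $I$ — a total of $2\cdot 4+1\cdot 4=12$ conditions, determining $f$ uniquely. (This is the construction of \cite{BD2025}; if $h(-1)=+\infty$ both sides of \eqref{ulb-h-energy-48} are infinite, so one may assume $h(-1)<\infty$, and then $h$, being absolutely monotone, is smooth up to $-1$.)

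For condition (E1) I would invoke the Hermite remainder formula: for $t\in(-1,1)$ there is $\xi_t\in(-1,1)$ with $h(t)-f(t)=\omega_i(t)\,h^{(12)}(\xi_t)/12!$, where $\omega_i$ is the monic nodal polynomial of the interpolation scheme,
\[
\omega_1(t)=(t+1)^2\,t^2\Bigl(t^2-\tfrac14\Bigr)^2\Bigl(t^2-\tfrac19\Bigr)\Bigl(t^2-\tfrac1{36}\Bigr),
\]
\[
\omega_2(t)=(t+1)^2\,t^2\Bigl(t^2-\tfrac1{36}\Bigr)^2\Bigl(t^2-\tfrac14\Bigr)\Bigl(t^2-\tfrac19\Bigr).
\]
A short case check over the three subintervals of $U_i$ shows $\omega_i\geq 0$ on $U_i$: the unsquared quadratic factors are arranged so that their product is non-negative precisely on $U_i$ and negative on the removed gaps $T_i$ (which is irrelevant for (E1)). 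Since $h^{(12)}>0$ on $(-1,1)$, this yields $f(t)\leq h(t)$ for all $t\in U_i$ (the endpoint $t=1$ being handled by monotonicity of $h$), with equality exactly at the points of $I$.

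The delicate condition is (E2), the non-negativity of the Gegenbauer coefficients $f_1,\dots,f_{11}$. Here I would express each $f_j$ via \eqref{eq:orthorep} using a Gauss--Markov type quadrature whose node set contains the twelve interpolation nodes, exhibiting $f_j$ as a positive combination of the values $h(\alpha)$ and $h'(\alpha)$, $\alpha\in I$, which are non-negative because $h$ is absolutely monotone; this is the technical heart of the argument and is exactly what is carried out in \cite{BD2025}. For $i=2$ one obtains $f_j\geq 0$ for all $1\leq j\leq 11$ and Theorem~\ref{general-lp-energy} applies verbatim. For $i=1$ the coefficient $f_3$ turns out to be negative — this is precisely why the hypothesis $M_3(C)=0$ is imposed: one then argues through the identity \eqref{main-id} rather than invoking \eqref{lower-bound} directly, since $f_3 M_3(C)=0$ makes $\sum_{j\geq 1}f_jM_j(C)\geq 0$ regardless of the sign of $f_3$. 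In either case one obtains $\mathcal{E}^h(C)\geq N^2\bigl(f_0-f(1)/N\bigr)$, and dividing by $N=52\,416\,000$ and substituting the value of $f(1)$ gives \eqref{ulb-h-energy-48}.

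For the equality statement, suppose $C$ is antipodal, distance invariant, an $11$-design with $I(C)=I$ and distance distribution \eqref{eq:distdist2}. Then $M_j(C)=0$ for $1\leq j\leq 11$ while $f_j=0$ for $j\geq 12$, so $\sum_{j\geq 1}f_jM_j(C)=0$; since $f=h$ on $I(C)$ and $I(C)\cap T_i=\phi$, the identity \eqref{main-id} collapses to $\mathcal{E}^h(C)=N^2 f_0-N f(1)$, i.e.\ equality in \eqref{ulb-h-energy-48} (equivalently, one may verify this via the quadrature \eqref{QF}, valid because $\deg f\leq 11=\tau$). The minimal-vector sets of $P_{48p},P_{48q},P_{48m},P_{48n}$ are such codes — they are $11$-designs with inner products \eqref{eq:distdist1}, distance invariance being automatic from Theorem~\ref{thm:DGSdistances} since $\tau=11\geq d-1=7$ — hence they attain the bound and are $T_i$-avoiding universally optimal. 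I expect the main obstacle to be the verification of (E2), and in particular the exact determination of the signs of the $f_j$ (notably $f_3$ in the $T_1$ case); the remaining steps are bookkeeping with the interpolation scheme and with the identity \eqref{main-id}/quadrature \eqref{QF}.
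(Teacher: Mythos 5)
Your overall outline matches the methodology the paper uses for its own energy results in Section~7 (and, presumably, the approach of \cite{BD2025}, to which the paper defers for this theorem): a degree-$11$ Hermite interpolant with double nodes at $-1$ and at the three inner products interior to $U_i$, simple nodes at the four endpoints of $T_i$; condition (E1) via the Hermite remainder formula and the sign of the nodal polynomial $\omega_i$ on $U_i$; and the equality case through the quadrature \eqref{QF}. You also correctly read the hypothesis $M_3(C)=0$ in the $T_1$-case as the symptom of a coefficient $f_3$ of undetermined (negative) sign, recovered by the identity \eqref{main-id} through $f_3M_3(C)=0$, in the same way the paper itself handles $f_6=0$ in Theorem~\ref{48600-codes-t}. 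Your equality-case and attainability arguments via Theorem~\ref{thm:DGSdistances} and \eqref{QF} are fine.

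There is, however, a genuine gap in your sketch of condition (E2). You propose to exhibit $f_j$, $j\ge1$, as a positive linear combination of the values $h(\alpha)$ and $h'(\alpha)$, and to conclude $f_j\ge 0$ because those values are non-negative under absolute monotonicity. This cannot work as stated, for two reasons. First, the paper's definition of absolute monotonicity only requires $h^{(j)}\geq 0$ for $j\geq 1$, so $h(\alpha)$ itself has no prescribed sign. Second, even if one strengthens the definition to include $h\geq 0$, the functional $h\mapsto f_j$ kills constants for every $j\geq 1$, so in any representation $f_j=\sum_\alpha\bigl(a_\alpha h(\alpha)+b_\alpha h'(\alpha)\bigr)$ one must have $\sum_\alpha a_\alpha=0$; those weights cannot all be non-negative without vanishing, yet $f_j$ does depend on the $h(\alpha)$'s. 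The correct mechanism — and the one the paper spells out in the proof of Theorem~\ref{47104-en-t1} — is Newton's form \eqref{NewtonsFormula}: write $H_{11}(t)=h(t_1)+\sum_{k\geq 1}h[t_1,\dots,t_{k+1}]P_k(t)$, use that each divided difference $h[t_1,\dots,t_{k+1}]$ with $k\geq 1$ is non-negative for absolutely monotone $h$, and then reduce (E2) to the $h$-independent check that each partial product $P_k$ has non-negative Gegenbauer coefficients $(P_k)_j$ for $j\geq 1$. In particular, the choice of ordering of the multi-set $\mathcal{I}$ matters here, since it determines the $P_k$; this is where $f_3<0$ (equivalently, some $(P_k)_3<0$) is actually detected in the $T_1$ case, and it is the step your proposal leaves unsubstantiated.
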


\section{Codes}

In this section we describe good spherical codes and designs whose optimality will be shown in the next three sections. Most of these codes can be seen as derived codes in the celebrated kissing configuration $L \subset \mathbb{S}^{23}$ of $196560$ points defined by the minimum non-zero norm in the Leech lattice $\Lambda_{24}$. Recall that the Leech lattice is the unique even unimodular lattice in $\mathbb{R}^{24}$ with no roots, i.e. with minimum norm $4$. By appropriate scaling and translation it is useful to embed spherical codes as vectors in the Leech lattice that lie on certain hyperspheres. Following Conway--Sloane \cite{CS} notation we shall denote the set of $196560$ minimal vectors of length $2$ by $\Lambda(2)$ and the set of $16773120$ vectors of length $\sqrt{6}$ as $\Lambda(3)$. 

\subsection{Codes on \texorpdfstring{$\mathbb{S}^{22}$}{S22}}\label{Codes_22} We start with derived codes obtained from $L$ according to Theorems \ref{thm:DGSderived} and \ref{derived_codes_thm}. Fix a point $x \in L$ and consider all points of $L$ with inner product $\alpha \in \{1/2, 1/4, 0\}$ with $x$. After proper rescaling, we obtain codes $C_{\alpha} \subset \mathbb{S}^{22}$ with inner products 
\begin{equation} \label{I-derived}
I(C_{\alpha})=\left\{\frac{\beta-\alpha^2}{1-\alpha^2} : \beta \in I(L) \right\} \cap [-1,1).
\end{equation}
These three codes are spherical $7$-designs on $\mathbb{S}^{22}$ (therefore distance invariant). Note that $M_9(C_{\alpha})=0$ in all three $\alpha$'s; moreover, the codes $C_{1/2}$ and $C_0$ are in fact antipodal.  

The smallest one, $C_{1/2}$, is a tight spherical $7$-design on $\mathbb{S}^{22}$ of cardinality $4600$, that
is unique up to isometry \cite{BS1981} (see also \cite{CK-NYJM}), and that is universally optimal \cite{CK2007}. 

The second code, $C_{1/4}$ has cardinality $47104$, inner products 
\[ I(C_{1/4})=\left\{ -\frac{3}{5},-\frac{1}{3},-\frac{1}{15},\frac{1}{5},\frac{7}{15}\right\} \]
(according to \eqref{I-derived}), and distance distribution
\[ F(C_{1/4})=\left(275,7128,22275,15400,2025\right). \]

The third code, $C_0$, is located in the equatorial hypersphere (with respect to $x$) of $\mathbb{S}^{23}$. It has cardinality $93150$ and the same inner products as $L$ (since $\alpha=0$ in \eqref{I-derived}); i.e., 
\[ I(C_0)=\left\{ -1,-\frac{1}{2},-\frac{1}{4}, 0, \frac{1}{4},  \frac{1}{2} \right\}. \]
Its distance distribution is 
\[ F(C_0)=(1, 2464, 22528, 43164, 22528, 2464). \]

As Theorem \ref{derived_codes_thm} shows, good derived codes can be obtained not only when starting from a point of the original code. In fact, one can also use suitable centers of spherical caps defined by facets, for instance centers corresponding to deep holes. The deep holes of $\Lambda_{24}$ were classified by Niemeier \cite{Nie2} (see Chapter 16 in \cite{CS}). There are 23 inequivalent deep holes, one for each of the 23 even unimodular lattices in $\mathbb{R}^{24}$ found by Niemeier. Probably the most interesting one corresponds to a facet of the code $L$ with 552 vertices, which we describe below.

If we fix a vector $W\in \Lambda(3)$ there are $552$ vectors from $\Lambda(2)$ that are at a distance $2$ from $W$, $11178$ vectors at a distance $\sqrt{6}$ and $48600$ at a distance $\sqrt{8}$. If $\widetilde{w}$ is the projection of $W$ onto $\mathbb{S}^{23}$ (note that $L$ is the respective projection of $\Lambda(2)$), then these three codes are derived codes $C_{\sqrt{6}/4,\widetilde{w}}$, $C_{\sqrt{6}/6,\widetilde{w}}$, and $C_{\sqrt{6}/12,\widetilde{w}}$, respectively. Since the parameters under consideration (inner products and distance distribution) do not depend on the point $\widetilde{w}$ we will omit it in the sequel. 

From Theorem \ref{derived_codes_thm} all three codes are spherical $5$-designs (observe that $L$ is contained in $7$ parallel hyperplanes, i.e. $\ell = 7$). We note that the spherical design property for these subcodes may be verified directly using the coordinate representation of $\Lambda(2)$, e.g. found in \cite[Chapter 10]{CS}. 

The $552$ facet mentioned above is $C_{\sqrt{6}/4}$. It is a tight $5$-design on $\mathbb{S}^{22}$ and hence a universally optimal code (see \cite{CK2007}).

The next derived code $C_{\sqrt{6}/6}\subset \mathbb{S}^{22}$ has a cardinality $11178$ and has four distinct inner products, 
\[ I(C_{\sqrt{6}/6})=\left\{-\frac{1}{2}, -\frac{1}{5}, \frac{1}{10}, \frac{2}{5}\right\}. \]
The distance distribution of $C_{\sqrt{6}/6}$ is
\[ F(C_{\sqrt{6}/6})=(352,4125,5600,1100). \]

The third derived code we will consider is $C_{\sqrt{6}/12}$ which has cardinality $48600$ and five distinct inner products, 
\[ I(C_{\sqrt{6}/12})=\left\{-\frac{13}{23}, -\frac{7}{23}, -\frac{1}{23}, \frac{5}{23}, \frac{11}{23}\right\} \]
and distance distribution 
\[ F(C_{\sqrt{6}/12})=( 506, 8855, 23046,14421,1771). \]

We remark that $\widetilde{w}$ is the {\em universal minimum} of the $h$-potential of $L$, that is for any absolutely monotone potential $h$, the point $\widetilde{w}$ is a global minimum on $\mathbb{S}^{23}$ of $U_h(x):=\sum_{y\in L}h(x\cdot y)$, and all global minima are projections of points in $\Lambda(3)$ on $\mathbb{S}^{23}$ (see \cite{BDHSS2024}).

\subsection{On the orthogonal complement of the Golay code} Delsarte, Goethals and Seidel proved in \cite{DGS} (see Examples 4.7 and 9.3) that any spherical code with inner products in $[-1,\alpha] \cup [\beta,\gamma]$ (i.e., any
$(\alpha,\beta)$-avoiding $\gamma$-code) has cardinality at most
\begin{equation}\label{DGS_3bound} -\frac{n(1-\alpha)(1-\beta)(1-\gamma)}{\alpha+\beta+\gamma+n\alpha\beta\gamma} \end{equation}
provided that the numbers $\alpha$, $\beta$, and $\gamma$ satisfy 
\begin{equation}\label{DGSCond}\alpha+\beta+\gamma \leq 0, \ \ \alpha\beta+\beta\gamma+\gamma\alpha \geq -\frac{3}{n+2}, \ \ \alpha+\beta+\gamma+n\alpha\beta\gamma <0. \end{equation}
This result can be obtained via $f(t)=(t-\alpha)(t-\beta)(t-\gamma)$ in Theorem \ref{general-lp-max-codes}. We can write
\[ \mathcal{A}\left(n,\gamma;\left(\alpha,\beta\right)\right) \leq 
-\frac{n(1-\alpha)(1-\beta)(1-\gamma)}{\alpha+\beta+\gamma+n\alpha\beta\gamma} \]
under the above conditions. This bound is attained (Example 9.3 in \cite{DGS}) by a spherical code $C_G \subset \mathbb{S}^{22}$ which is obtained by standard mapping on $\mathbb{S}^{22}$ of the orthogonal complement of the binary Golay code. As expected, the code $C_G$ has three inner products, namely $\alpha=-9/23$, $\beta=-1/23$, and $\gamma=7/23$ (corresponding to the Hamming distances 16, 12, and 8, respectively). Therefore,
\[ \mathcal{A}\left(23,\frac{7}{23},\left(-\frac{9}{23},-\frac{1}{23}\right)\right)=2048 \]
from \cite{DGS}. 
The distance distribution is given by 
\[ F(C_{2048})=(253,1288,506). \]

\subsection{Codes on \texorpdfstring{$\mathbb{S}^{21}$}{S21}}\label{Codes_21} Next, we focus on codes in $\mathbb{S}^{21}$. We shall consider derived codes of the tight spherical $7$-design with $4600$ vectors $C_{1/2}  \subset \mathbb{S}^{22}$ considered in the previous section. If we fix a point of this code, there are four derived codes at inner products $1/3$, $0$, $-1/3$ and $-1$ with cardinality $891$, $2816$, $891$, and $1$, respectively. The first and the third derived codes are copies of the sharp code (a non-antipodal spherical 5-design and a maximal spherical $(1/4)$-code) with $891$ points, that is unique \cite{CK-NYJM}
and universally optimal \cite{CK2007}. So, our first code at this level will be the derived $C_{2816}:=C_0$ (i.e., equatorial) of $C_{1/2}$. It was shown in \cite{B1993} that $C_{2816}$ is a maximal antipodal spherical $(1/3)$-code and in \cite{BBDHSS2025} that it is universally optimal in the real projective space $\mathbb{RP}^{21}$.
For the purposes of this paper we need to observe that $C_{2816}$ is 
an antipodal spherical $5$-design on $\mathbb{S}^{21}$ of cardinality $|C_{2816}|=2816$ with inner products
\[ I(C_{2816})=\left\{-1, -\frac{1}{3},0, \frac{1}{3} \right\} \]
from \eqref{I-derived}. The distance distribution of this code is
\[ F(C_{2816})= (1,567,1680,567). \]

The other code we shall consider is derived when we fix a point $\widetilde{w}$, which is a global minimum of the discrete $h$ potential of sharp code $C_{1/2}$ above. There are four derived codes at inner products $\{ -\sqrt{5}/5, -\sqrt{5}/15, \sqrt{5}/15, \sqrt{5}/5\}$ with cardinality $275$, $2025$, $2025$, and $275$, respectively. From Theorem \ref{derived_codes_thm} we conclude they are $4$-designs. The $275$-code is a spectral embedding of the McLaughlin graph and is a sharp code (a tight $4$-design) and hence universally optimal.  
The other derived code is a spherical $4$-design $C_{2025} \subset \mathbb{S}^{21}$ of cardinality $|C_{2025}|=2025$ with
\[ I(C_{2025})=\left\{-\frac{4}{11},-\frac{1}{44}, \frac{7}{22} \right\}. \] 
It has distance distribution
\[ F(C_{2025})=(330,1232,462). \]

\subsection{Codes on \texorpdfstring{$\mathbb{S}^{15}$}{S15}}

We consider the kissing configuration of the Barnes--Wall lattice in 16 dimensions. It is an antipodal spherical $7$-design $C_{BW}$ of cardinality $4320$ with
\[ I(C_{BW})=\left\{-1,0, \pm \frac{1}{4}, \pm \frac{1}{2}\right\}. \]
The distance distribution is
\[ F(C_{BW})=(1,280,1024,1710,1024,280). \]

\subsection{Codes from strongly regular graphs} For the last portion of codes we need the notion of strongly regular graphs. A strongly regular graph (SRG) with parameters $(v, k, \lambda, \mu)$ is a graph with $v$ vertices such that every vertex has degree $k$,
every two adjacent vertices have exactly $\lambda$ common neighbors and every two non-adjacent vertices have exactly $\mu$ common neighbors. The adjacency matrix of an SRG has three eigenvalues, $k$, $e_1$, and $e_2$ ($e_1 \geq 0> e_2$) with multiplicities $1$, $n_1$ and $n_2$, respectively, where $n_1 + n_2 = v-1$. The 
eigenspaces that correspond to $e_1$ and $e_2$ contain embeddings of the graph which are spherical codes with usually good parameters. More precisely, every SRG has two embeddings into spheres $\mathbb{S}^{n_1-1}$ and $\mathbb{S}^{n_2-1}$ and these embeddings are $(n_1,v,2,2)$- and $(n_2,v,2,2)$-configurations, respectively. The two inner products are denoted by $p$ and $q$, as $p < 0 < q$. 

In order to apply the linear programming technique for these embeddings, we will require $p+q \leq 0$. Actually, at least one of these embeddings satisfies a stronger inequality.

\begin{lemma} \label{lm:SRGpq}
For every SRG the inequality $p+q < 0$ holds for at least one of the two embeddings. 
\end{lemma}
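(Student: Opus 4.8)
The plan is to pass to the two-class association scheme of the SRG, express the sum $p+q$ of the two inner products of each spherical embedding as an explicit affine function of a single scalar, and then observe that the two resulting expressions cannot both be nonnegative.

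First I would recall the standard setup. Let $A$ be the adjacency matrix of the SRG on $v$ vertices, with eigenvalues $k>e_1\ge 0>e_2$ of multiplicities $1,n_1,n_2$, and let $E_0=\tfrac1v J$ (with $J$ the all-ones matrix), $E_1$, $E_2$ be the orthogonal projections onto the three eigenspaces, so that $E_0+E_1+E_2=I$. Each $E_i$ is a polynomial in $A$, hence lies in the Bose--Mesner algebra; since $I$, $A$, $A^2$ all have constant diagonal, so does $E_i$, and because $\operatorname{tr}E_i=n_i$ this forces $(E_i)_{xx}=n_i/v$. Moreover each $E_i$ takes exactly two off-diagonal values, one on adjacent pairs and one on non-adjacent pairs. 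The $i$-th embedding is obtained by taking the rows of $E_i$ and rescaling them to the unit sphere; since $E_i^2=E_i=E_i^\top$, the Gram matrix of the rows of $E_i$ is $E_i$ itself, so after normalization the two inner products of the embedding into $\mathbb{S}^{n_i-1}$ equal $\tfrac{v}{n_i}$ times the two off-diagonal values of $E_i$.

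Next, set $S:=\rho_{\mathrm{adj}}+\rho_{\mathrm{nonadj}}$, the sum of the two off-diagonal values of $E_1$; then for the first embedding $p_1+q_1=\tfrac{v}{n_1}S$. From $E_2=I-E_0-E_1$ we get $(E_2)_{xy}=-\tfrac1v-(E_1)_{xy}$ for $x\neq y$, so the two off-diagonal values of $E_2$ sum to $-\tfrac2v-S$, and hence for the second embedding $p_2+q_2=\tfrac{v}{n_2}\bigl(-\tfrac2v-S\bigr)=-\tfrac{2}{n_2}-\tfrac{v}{n_2}S$. Now split into two cases: if $S<0$ then $p_1+q_1=\tfrac{v}{n_1}S<0$; and if $S\ge 0$ then $p_2+q_2=-\tfrac{2}{n_2}-\tfrac{v}{n_2}S\le -\tfrac{2}{n_2}<0$. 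In either case $p+q<0$ holds for at least one of the two embeddings, which is the assertion.

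I do not expect a genuine obstacle here; the two points that need a word of care are the constant-diagonal property of $E_1,E_2$ (immediate from the association-scheme structure, or simply quotable) and the identification of the embedding's inner products with $\tfrac{v}{n_i}$ times the off-diagonal entries of $E_i$, which is the idempotent/Gram-matrix computation above. As a sanity check, the Petersen graph $(10,3,0,1)$ gives $p+q=-\tfrac13+\tfrac13=0$ for the $e_1=1$ embedding but $p+q=-\tfrac23+\tfrac16=-\tfrac12<0$ for the $e_2=-2$ embedding, which both confirms the lemma and shows that one cannot in general demand $p+q<0$ for \emph{both} embeddings.
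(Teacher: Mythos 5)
Your proof is correct, and it takes a genuinely different route from the paper's. The paper works directly with the explicit formulas $p_1 = -\frac{1+e_1}{v-k-1}$, $q_1 = \frac{e_1}{k}$, $p_2 = \frac{e_2}{k}$, $q_2 = -\frac{1+e_2}{v-k-1}$, reduces $p_i+q_i<0$ to the sign of $e_i(v-2k-1)-k$, and then splits on the sign of $v-2k-1$ using $e_1 \ge 0 > e_2$ and $k>0$. You instead exploit the Bose--Mesner idempotent structure: each primitive idempotent $E_i$ is the Gram matrix of the $i$-th embedding up to the scalar $v/n_i$, so the off-diagonal entries of $E_1$ and $E_2$ are linked by $E_0+E_1+E_2=I$, giving $p_2+q_2 = -\tfrac{2}{n_2} - \tfrac{v}{n_2}(p_1+q_1)\tfrac{n_1}{v}$, and the dichotomy on the sign of $S$ finishes the job without ever invoking the eigenvalue signs or the parameter formulas. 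Your argument is arguably more conceptual and shows the statement is really an affine constraint between the two quantities $p_1+q_1$ and $p_2+q_2$ forced by $\sum_i E_i = I$; the paper's argument is more elementary in that it uses only the known closed forms for $p_i,q_i$ and no algebra-of-idempotents machinery. Both give the strict inequality, and your Petersen-graph check correctly illustrates that one cannot always get $p+q<0$ for both embeddings.
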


\begin{proof} The relations between the parameters of SRG and the inner products were described, for example, in Section 9.4 from \cite{EZ}. 
We have 
\[
p_1 = -\frac{1 + e_1}{v-k-1}, \quad q_1 = \frac{e_1}{k} 
\]
for the first embedding and
\[
p_2 = \frac{e_2}{k}, \quad q_2 = -\frac{1 + e_2}{v-k-1} 
\]
for the second one. It is easy to see that $p_1 + q_1 < 0$ if and only if $e_1 (v-2k-1) - k < 0$ and $p_2 + q_2 < 0$ if and only if $e_2 (v-2k-1) - k < 0$. Since the eigenvalues satisfy $e_1 \geq 0> e_2$, it follows that (whatever is the sign of $(v-2k-1)$) at least one of the embeddings has inner products $p$ and $q$ such that $p+q < 0$.
\end{proof}
We note that sometimes the condition $p+q \leq 0$ is satisfied by both embeddings. For example, this is the case with the Schl{\"a}fli SRG with parameters $(27, 16, 10, 8)$ and the so-called conference graphs (where $k = (v-1)/2$).

Recently Cohn, de Laat and Leijenhorst~\cite{CLL} used semidefinite programming to show that certain embeddings of SRG have maximal cardinality among the $q$-codes. They conjectured that such an approach may be used for many triangle-free SRG.

\begin{conjecture}[Cohn--de Laat--Leijenhorst,~\cite{CLL}]
Let $G$ be a connected triangle-free SRG other
than a complete bipartite graph, and let $C$ be the code obtained by the embedding of $G$ into its eigenspace with the smallest eigenvalue $e_2$. Then three-point (SDP) bounds prove that $C$ is a maximal spherical code.
\end{conjecture}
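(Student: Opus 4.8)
\emph{Proof proposal.} The natural framework is the three-point semidefinite programming (SDP) bound of Bachoc--Vallentin on $\mathbb{S}^{n_2-1}$, where $n_2$ is the multiplicity of the smallest eigenvalue $e_2$, applied to spherical codes whose maximal inner product equals $q:=-(1+e_2)/(v-k-1)$, the larger of the two inner products of $C$. Triangle-freeness means $\lambda=0$, so $e_1+e_2=-\mu$, $e_1e_2=\mu-k$, and (by the formulas in the proof of Lemma \ref{lm:SRGpq}) the inner products of the $e_2$-embedding are $p=e_2/k<0$ for adjacent vertices and $q>0$ for non-adjacent ones. The decisive structural fact is that no three vertices of $C$ are pairwise at inner product $p$, i.e.\ $C$ contains no ``triangle'' $(p,p,p)$ among its triples. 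This is precisely the information a two-point argument cannot exploit: the natural candidate $f(t)=(t-p)(t-q)$ is positive near $t=-1$ and so violates (A1) of Theorem \ref{general-lp-max-codes}, and for the $e_2$-embedding one cannot even fall back on $p+q\le0$, since Lemma \ref{lm:SRGpq} guarantees this only for one of the two embeddings.

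The plan is then: (i) build a dual-feasible point from positive semidefinite matrices $(F_\ell)$ whose associated three-point function $\Phi(a,b,c)$ is $\le0$ on every triple $(a,b,c)$ of pairwise inner products geometrically realizable in a $q$-code \emph{except} on the configuration $(p,p,p)$, where positivity is permitted because that triple is forbidden in $C$; (ii) tune the $F_\ell$ together with the normalizing quantities $\Phi(1,\cdot,\cdot)$ and $\Phi(1,1,1)$ so that the resulting bound on $|C|$ equals $v$ and the zero set of $\Phi$ contains all two-point and three-point configurations actually occurring in $C$; and (iii) verify tightness: as an $(n_2,v,2,2)$-configuration, $C$ is a two-distance spherical $2$-design, so its distance distribution and --- via the strongly regular three-point structure --- its triple distribution are completely determined, and one checks that these match the complementary-slackness conditions of the certificate, using triangle-freeness exactly to discard the triple $(p,p,p)$. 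Equality then forces $|C|=v$, i.e.\ maximality.

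The main obstacle is uniformity over the infinite family. For any single triangle-free SRG the above is a finite SDP feasibility problem that can be solved numerically and rationalized (this is what \cite{CLL} carry out in examples), but the conjecture demands closed-form PSD matrices $F_\ell=F_\ell(p,q,v,k,\mu)$ that remain feasible for all admissible parameter vectors simultaneously, together with an explanation of why triangle-freeness --- rather than some inequality among $p$ and $q$ --- is exactly the hypothesis that closes the gap. The dependence of the optimal SDP solution on the graph parameters is delicate, which is why the statement remains conjectural. A more modest intermediate goal, in the spirit of the present paper, is to first establish a $T$-avoiding analogue for a set $T$ that isolates the interval between $p$ and $q$, where a suitable low-degree two-point polynomial already yields the bound via Theorem \ref{general-lp-max-codes}, and then to attempt to remove the $T$-restriction by an SDP refinement.
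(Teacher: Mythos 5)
This statement is labeled a \emph{conjecture} in the paper, attributed to Cohn--de Laat--Leijenhorst, and the paper offers no proof of it; the authors explicitly restrict themselves to a weaker result in Section~7, namely universal optimality of the SRG embedding among $[-1,p)$-avoiding codes under $p+q\le 0$, proved with the quadratic two-point polynomial $g(t)=at^2+bt+c$. There is therefore no "paper's own proof" to compare your proposal against. Your proposal, to its credit, is honest on this point: you sketch what a three-point SDP certificate would have to accomplish (a function $\Phi$ nonpositive on all geometrically realizable triples except the forbidden $(p,p,p)$, together with complementary slackness matched to the two- and three-point distributions of the $(n_2,v,2,2)$-configuration), but you stop short of exhibiting such a certificate, and you correctly diagnose the real obstruction — producing parameter-uniform PSD matrices $F_\ell(p,q,v,k,\mu)$ feasible across the whole infinite family. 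So your text is a reasonable plan of attack, not a proof, and should not be presented as one.

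Your concluding paragraph, proposing a $T$-avoiding two-point bound as an intermediate step, is essentially what the paper actually does: with $f(t)=(t-p)(t-q)$ one obtains $|C|\le v$ for $[-1,p)$-avoiding $q$-codes whenever $f_1=-(p+q)\ge 0$ (the condition guaranteed for at least one embedding by Lemma~\ref{lm:SRGpq}), and the paper then upgrades this to universal optimality via a Hermite-type interpolant $g$ with the same sign pattern. One small caution about your framing: you claim that for the $e_2$-embedding one "cannot fall back on $p+q\le 0$." Lemma~\ref{lm:SRGpq} only guarantees the inequality for one embedding, but it does not say it fails for the $e_2$-embedding; in all the standard triangle-free SRG examples one has $v>2k+1$, which forces $p_2+q_2<0$ as well. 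If you want to argue that the two-point bound is genuinely insufficient for the $e_2$-embedding, you would need to address this, or else shift the argument to the content the SDP actually adds (ruling out the $(p,p,p)$ triple and removing the forbidden-interval hypothesis), rather than to a failure of the $p+q\le 0$ condition.
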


In Section 7 we prove universal optimality of $[-1,p)$-avoiding SRG codes under the assumption $p + q \leq 0$.

\section{Maximal \texorpdfstring{$T$}{T}-avoiding codes}

\subsection{Codes on \texorpdfstring{$\mathbb{S}^{22}$}{S22}}

We start our consideration of maximal codes with $T$-avoiding codes on $\mathbb{S}^{22}$ with maximal inner product $s=7/15$. We shall prove that the code $C_{1/4}=(23,47104,7/15)$ is a maximal $T$-avoiding $(7/15)$-code for
each
\begin{equation} \label{T-codes}
T\in \left\{\left(-\frac{3}{5},-\frac{1}{3}\right), \left(-\frac{1}{3}, -\frac{1}{15}\right), \left(-\frac{1}{15}, \frac{1}{5}\right)\right\}. \end{equation} 

\begin{thm} \label{47104-codes-t1}
We have $\mathcal{A}(23,7/15;T)=47104$ for each $T$ from \eqref{T-codes}.
Consequently, $C_{1/4}$ is a maximal $T$-avoiding spherical $(7/15)$-code
for these $T$. 
\end{thm}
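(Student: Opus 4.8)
The plan is to apply Theorem~\ref{general-lp-max-codes} with a polynomial $f$ of degree $5$ tailored to the known inner product set $I(C_{1/4})=\{-3/5,-1/3,-1/15,1/5,7/15\}$. For any of the three choices of $T$ in \eqref{T-codes}, the set $[-1,7/15]\setminus T$ is a disjoint union of closed intervals whose union still contains all five elements of $I(C_{1/4})$; the endpoints of the interval $T$ are two \emph{consecutive} elements of $I(C_{1/4})$. Since $C_{1/4}$ is a $7$-design (hence distance invariant by Theorem~\ref{thm:DGSdistances}) with the listed distance distribution, the natural candidate is $f(t)=\prod_{t_i\in I(C_{1/4})}(t-t_i)$, i.e. $f(t)=(t+3/5)(t+1/3)(t+1/15)(t-1/5)(t-7/15)$. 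I would normalize so that $f(1)/f_0$ comes out to $47104$; this amounts to computing the Gegenbauer expansion of $f$ in dimension $n=23$ and checking (A1) and (A2).

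The concrete steps are as follows. First I would compute the coefficients $f_0,f_1,\dots,f_5$ via \eqref{eq:orthorep}, using the explicit low-degree Gegenbauer polynomials $P_i^{(23)}$ (equivalently the three-term recurrence), and verify $f_0>0$ and $f_i\ge 0$ for $i=1,\dots,5$. This establishes (A2) and also, via the last part of Theorem~\ref{general-lp-max-codes}, forces any equality-attaining code to be a spherical $5$-design (in fact we will recover the $7$-design property separately). Second, I would check the sign condition (A1): $f(t)\le 0$ for all $t\in[-1,7/15]\setminus T$. Because $f$ is a degree-$5$ polynomial with the five real roots $-3/5<-1/3<-1/15<1/5<7/15$ and positive leading coefficient, its sign alternates between consecutive roots; on $[-1,-3/5]$ it is negative, positive on $(-3/5,-1/3)$, negative on $(-1/3,-1/15)$, positive on $(-1/15,1/5)$, and negative on $(1/5,7/15)$. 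Thus $f\le 0$ on exactly the union of $[-1,-3/5]$, $[-1/3,-1/15]$, and $[1/5,7/15]$ — and removing any one of the three "bad" open intervals $(-3/5,-1/3)$, $(-1/3,-1/15)$, $(-1/15,1/5)$ (these are precisely the three sets $T$ in \eqref{T-codes}) leaves a set on which $f\le 0$. This is exactly why the three choices of $T$ all work with the \emph{same} polynomial. Third, I would verify the equality $f(1)/f_0=47104$, which upgrades the bound $\mathcal{A}(23,7/15;T)\le f(1)/f_0$ to an equality since $C_{1/4}$ is itself $T$-avoiding with maximal inner product $7/15$ and $|C_{1/4}|=47104$.

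Finally, for the structural ("if equality is attained") addendum — which the theorem as stated promises implicitly via the phrase "maximal $T$-avoiding code" — I would invoke the equality conditions of Theorem~\ref{general-lp-max-codes}: $I(C)$ must lie in the zero set of $f$, i.e. $I(C)\subseteq I(C_{1/4})$, and $f_iM_i(C)=0$ for $i\ge 1$, so if all of $f_1,\dots,f_5$ are strictly positive then $C$ is a $5$-design; combined with antipodality considerations and Theorem~\ref{thm:DGSdistances} one pins down distance invariance and the distance distribution $F(C_{1/4})=\{275,7128,22275,15400,2025\}$ from the quadrature rule \eqref{QF}.

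The main obstacle is the verification of (A1)–(A2) — specifically confirming that \emph{all} five nonconstant Gegenbauer coefficients $f_i$ are nonnegative (ideally strictly positive, to get the design conclusion) and that the normalized ratio is exactly $47104$; this is a finite but delicate rational arithmetic computation in dimension $23$, and it is the only place where the specific numerology could fail. The sign-alternation argument for (A1) is essentially automatic once the roots are located, and the fact that one polynomial handles all three $T$'s is the conceptual payoff. I expect the full technical details of the Gegenbauer expansion to be deferred to the Appendix, as the authors indicate in the introduction.
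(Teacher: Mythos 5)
There is a genuine gap, and it is fatal to the proposed construction: the single degree-$5$ annihilator $f(t)=(t+\tfrac35)(t+\tfrac13)(t+\tfrac1{15})(t-\tfrac15)(t-\tfrac7{15})$ does not satisfy condition (A1) for \emph{any} of the three sets $T$ in \eqref{T-codes}. Your own sign table shows why: since all five roots are simple, $f$ alternates sign, being positive on the two disjoint intervals $\left(-\tfrac35,-\tfrac13\right)$ and $\left(-\tfrac1{15},\tfrac15\right)$ (note that $\left(-\tfrac13,-\tfrac1{15}\right)$, which you list as a third ``bad'' interval, is in fact a negativity interval by your own computation). Each admissible $T$ is a \emph{single} open interval, so $[-1,\tfrac7{15}]\setminus T$ always still contains at least one of the two positivity intervals: e.g.\ for $T=\left(-\tfrac13,-\tfrac1{15}\right)$ both positivity intervals survive, and for $T=\left(-\tfrac35,-\tfrac13\right)$ the interval $\left(-\tfrac1{15},\tfrac15\right)$ survives. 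Hence $f>0$ somewhere on $[-1,s]\setminus T$ and Theorem~\ref{general-lp-max-codes} cannot be applied. No degree-$5$ polynomial with simple roots at the five inner products can repair this, because simple roots force sign alternation; the only way to suppress a sign change at a root is to double it, which raises the degree.

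This is exactly what the paper does: it uses a \emph{different} degree-$7$ polynomial for each $T$, obtained from the annihilator by squaring the factors at the two inner products lying in the interior of the allowed region (i.e.\ those that are neither endpoints of $T$ nor the top value $\tfrac7{15}$). For instance, for $T=\left(-\tfrac13,-\tfrac1{15}\right)$ the paper takes $f(t)=\left(t+\tfrac35\right)^2\left(t+\tfrac13\right)\left(t+\tfrac1{15}\right)\left(t-\tfrac15\right)^2\left(t-\tfrac7{15}\right)$, which is $\le 0$ on $[-1,\tfrac7{15}]\setminus T$, and analogously for the other two cases (with squares at $-\tfrac35,-\tfrac13$, resp.\ at $-\tfrac1{15},\tfrac15$). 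Your observation that $f(1)/f_0=47104$ follows from the $7$-design property of $C_{1/4}$ via \eqref{main-id} remains valid for these degree-$7$ polynomials (they still vanish on $I(C_{1/4})$ and have degree $\le 7$), but condition (A2) must then be checked coefficientwise for each of the three polynomials, which is the computational content of the paper's proof and its appendix. So the ``one polynomial handles all three $T$'s'' payoff you aim for is precisely the step that fails.
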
 

\begin{proof}
We explain in detail the case $T=(-1/3,-1/15)$. 
We apply Theorem \ref{general-lp-max-codes} with the polynomial
\[ f(t)= \left(t + \frac{3}{5}\right)^2 \left(t+\frac{1}{3}\right) \left(t+\frac{1}{15}\right) \left(t-\frac{1}{5}\right)^2 \left(t-\frac{7}{15}\right). \]

Since $f(t) \leq 0$ for $t \in [-1,7/15] \setminus T$, the condition (A1) is satisfied. 
We verify (A2) by presenting explicitly the Gegenbauer expansion of $f(t)$. Since
\[ f(t)=\sum_{i=0}^{7} f_iP_i^{(23)}(t), \]
where 
\[ f_0=\frac{256}{9703125}, \ f_1=\frac{3328}{7340625}, \ f_2=\frac{2962432}{506503125}, \ f_3=\frac{13274624}{379265625}, \]
\[ f_4=\frac{450208}{4708125}, \ f_5=\frac{13408}{58725}, \ f_6=\frac{50336}{121365}, \ f_7=\frac{416}{899}, \] 
the condition (A2) is satisfied as well. 

It remains to see that $f(1)=524288/421875$ and, therefore,
\[ 47104 \leq \mathcal{A}\left(23,\frac{7}{15};\left(-\frac{1}{3}, -\frac{1}{15}\right)\right) \leq \frac{f(1)}{f_0}=47104.\]
This completes the proof in this case. The technical details for the other two cases, $\mathcal{A}\left(23,7/15; \left(-1/15,1/5\right)\right)=
\mathcal{A}\left(23,7/15; \left(-3/5,-1/3\right)\right)=47104$,
are given in the Appendix.  
\end{proof}

The details of the next two proofs (polynomials and their Gegenbauer expansions) can be found in the Appendix. We will first see that the code $C_0=(23,93150,1/2)$ is a maximal $T$-avoiding $(1/2)$-code for $T$ being union of two open intervals. 

\begin{thm} \label{93150-codes-t1}
We have $\mathcal{A}\left(23,1/2;\left(-1/2,-1/4\right) \cup \left(0,1/4\right)\right)=93150$.
Consequently, $C_{0}$ is a maximal $\left(-1/2,-1/4\right) \cup \left(0,1/4\right)$-avoiding spherical $(1/2)$-code. 
\end{thm}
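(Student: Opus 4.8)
The plan is to mimic the proof of Theorem~\ref{47104-codes-t1} with the linear programming bound of Theorem~\ref{general-lp-max-codes}, now applied on $\mathbb{S}^{22}$ with $s=1/2$ and $T=(-1/2,-1/4)\cup(0,1/4)$. The target code $C_0$ has inner product set $I(C_0)=\{-1,-\tfrac12,-\tfrac14,0,\tfrac14,\tfrac12\}$ and cardinality $93150$, and it is an antipodal $7$-design on $\mathbb{S}^{22}$. For the LP bound to be tight we need a polynomial $f$ that vanishes exactly on $I(C_0)\setminus\{1\}$ (so its zeros must include $-1,-\tfrac12,-\tfrac14,0,\tfrac14,\tfrac12$), is nonpositive on $[-1,1/2]\setminus T$, and has all Gegenbauer coefficients nonnegative with $f_0>0$. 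Since the two ``allowed'' gaps between consecutive elements of $I(C_0)$ inside which $f$ must remain $\leq 0$ are the intervals $(-1,-1/2)$, $(-1/4,0)$ and $(1/4,1/2)$, while $f$ must be $\leq 0$ but is free to be $0$ at the isolated points and may change sign across the forbidden intervals $(-1/2,-1/4)$ and $(0,1/4)$, the natural ansatz is
\[
f(t)=(t+1)(t+\tfrac12)\,(t+\tfrac14)\,t\,(t-\tfrac14)\,(t-\tfrac12)\cdot r(t),
\]
where $r(t)$ is a product of squared linear factors (and possibly a positive constant) chosen so that the overall sign on each allowed subinterval of $[-1,1/2]$ is $\leq 0$ and the degree is high enough (at most $7$, to keep condition (B2)-type issues irrelevant here, though only (A1),(A2) are needed) to force the seven Gegenbauer coefficients $f_0,\dots,f_7$ to come out nonnegative. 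A sign count shows the degree-$6$ product $\prod$ above already alternates correctly across the forbidden intervals; one then multiplies by an appropriate squared factor $(t-c)^2$ with $c$ chosen (by matching against the known expansion, exactly as the constants in Theorem~\ref{47104-codes-t1} were reverse-engineered from the design) to clear the sign on $(-1,-1/2)$ and to make all coefficients nonnegative.

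Concretely, I would first fix the candidate $f$ of degree $7$, compute its Gegenbauer expansion in the basis $P_i^{(23)}$ using \eqref{eq:orthorep}, and exhibit the coefficients $f_0,\dots,f_7$ explicitly as positive rationals, verifying (A2). Next I would check (A1): on $[-1,1/2]\setminus T$ one has to confirm $f(t)\le 0$, which reduces to a sign analysis on the three closed subintervals $[-1,-1/2]$, $[-1/4,0]$, $[1/4,1/2]$ together with the isolated interior points; because all non-squared roots of $f$ are precisely the endpoints of these subintervals, $f$ has constant sign on each, and it suffices to evaluate $f$ at one convenient interior point of each (the squared factor is automatically $\ge 0$). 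Finally I would compute $f(1)$ and verify $f(1)/f_0=93150$, which yields $\mathcal A(23,1/2;T)\le 93150$; the reverse inequality is witnessed by $C_0$ itself (shown $T$-avoiding since $I(C_0)\cap T=\phi$), giving equality and, by the equality clause of Theorem~\ref{general-lp-max-codes}, confirming $C_0$ is a maximal such code. The announced ``details in the Appendix'' comment in the paper suggests the authors will just display the polynomial and its expansion, so my write-up would follow the same template.

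The main obstacle is purely the bookkeeping of choosing the extra squared factor(s) and the leading constant so that all of $f_0,\dots,f_7$ are simultaneously nonnegative while $f$ still has the right sign pattern on $[-1,1/2]\setminus T$ — this is a finite but delicate search, and there is no guarantee a single degree-$7$ polynomial with only the forced simple zeros plus one squared factor works; one may need the squared factor's location tuned precisely, or a slightly higher degree with two squared factors. The guiding principle that resolves this is that equality in the LP bound forces $f$ to be (up to scaling) the unique polynomial of minimal degree vanishing on $I(C_0)\setminus\{1\}$ with the design's quadrature killing the higher coefficients; since $C_0$ is a $7$-design with $6$ interior inner products, counting degrees of freedom ($7$ coefficients, $6$ prescribed zeros) shows the squared factor has exactly one free parameter, pinned down by the requirement $f_7\ge 0$ or by direct substitution. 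So the computation is expected to close cleanly, exactly as in Theorem~\ref{47104-codes-t1}.
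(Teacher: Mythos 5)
Your overall framework — build a degree-bounded polynomial vanishing on $I(C_0)\setminus\{1\}$, check signs on $[-1,1/2]\setminus T$, verify nonnegative Gegenbauer coefficients, and compute $f(1)/f_0$ — is exactly the paper's plan. But the ansatz you settle on is more complicated than needed, and the justification you offer for the extra complication is mistaken. The paper simply takes the \emph{plain degree-$6$ product}
\[
f(t)=t(t+1)\left(t+\tfrac12\right)\left(t+\tfrac14\right)\left(t-\tfrac14\right)\left(t-\tfrac12\right),
\]
with no squared factor. Its Gegenbauer coefficients $f_0,\dots,f_6$ are all positive (listed in the Appendix), $f(1)=45/32$, $f_0=1/66240$, and $f(1)/f_0=93150$. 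The sign condition (A1) holds because the zeros sit precisely at the endpoints of the three allowed closed intervals $[-1,-1/2]$, $[-1/4,0]$, $[1/4,1/2]$, and the simple zeros let the sign flip to positive across the forbidden intervals $(-1/2,-1/4)$ and $(0,1/4)$ — a quick check at, say, $t=-3/4$, $t=-1/8$, $t=3/8$ shows $f<0$ on each allowed piece.

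The specific gap: you assert that one must multiply by a squared factor $(t-c)^2$ ``to clear the sign on $(-1,-1/2)$,'' but there is no sign defect there; five of the six linear factors are negative on that interval, so the product is already negative. Your degree-counting heuristic (``$7$ coefficients, $6$ prescribed zeros, so one free parameter'') also points you in the wrong direction: the equality clause of Theorem~\ref{general-lp-max-codes} forces the zero set of $f$ to \emph{contain} $I(C)$, which has $6$ elements, so the \emph{minimal} admissible degree is $6$ with no free parameter beyond scaling, and that already works. If you ran your proposed search you would presumably discover that $r(t)\equiv 1$ suffices (the paper even remarks that $(t+1)f(t)$ also works, illustrating there is slack), but as written your argument commits you to an unnecessary and unjustified squared factor and would have you hunting for a $c$ that isn't there. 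The confusion between conditions (A2) and (B2) in your parenthetical aside is minor but worth fixing: for Problem~1 the degree of $f$ is unconstrained; you need $f_i\ge 0$ for all $i$, which holds here because the degree-$6$ polynomial's coefficients are explicitly positive.
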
 

Next we have two optimality results for the code $C_{\sqrt{6}/6}$. 

\begin{thm} \label{11178-codes-t1}
We have $\mathcal{A}\left(23,2/5;\left(-1/2,-1/5\right)\right)=
\mathcal{A}\left(23,2/5; \left(-1/5,1/10\right)\right)=11178$.
Consequently, $C_{\sqrt{6}/6}$ is a maximal $\left(-1/2,-1/5\right)$-avoiding spherical $(2/5)$-code and
a maximal $\left(-1/5,1/10\right)$-avoiding spherical $(2/5)$-code.
\end{thm}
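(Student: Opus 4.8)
The plan is to apply Theorem~\ref{general-lp-max-codes} separately for each of the two sets $T$, in each case exhibiting an explicit polynomial $f$ of degree $4$ built from the inner products of $C_{\sqrt6/6}$. Recall $I(C_{\sqrt6/6})=\{-1/2,-1/5,1/10,2/5\}$ and $s=2/5$. For the set $T=(-1/5,1/10)$, the interval $[-1,2/5]\setminus T = [-1,-1/5]\cup[1/10,2/5]$, and a natural choice is
\[
f(t)=\left(t+\tfrac12\right)^2\left(t+\tfrac15\right)\left(t-\tfrac1{10}\right)\left(t-\tfrac25\right),
\]
of degree $5$; but since the bound should be met with $I(C)$ among the zeros, one wants $f$ to vanish at all four inner products. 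So more precisely I would take a degree-$4$ or degree-$5$ polynomial whose zero set is exactly $I(C_{\sqrt6/6})$ (with one square factor to pin down the sign on the forbidden side), analogous to the polynomial used in Theorem~\ref{47104-codes-t1}. The sign condition (A1), $f(t)\le 0$ on $[-1,2/5]\setminus T$, is then checked by tracking the sign of each linear/squared factor across the two sub-intervals; the square factor is placed on whichever root lies strictly between the two components of the complement (here $-1/5$ separates $[-1,-1/5]$ from $[1/10,2/5]$ once $-1/5$ itself is removed by $T$, so the factor $(t+1/5)^2$ — or rather the endpoint behaviour — is the delicate one), and one verifies the product has the right sign on each piece.

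The second ingredient is condition (A2): I would compute the Gegenbauer expansion $f(t)=\sum_{i=0}^{\deg f} f_i P_i^{(23)}(t)$ explicitly, using \eqref{eq:orthorep}, and exhibit all coefficients $f_i$ as positive rationals, exactly as done in the model proof. Once $f_0>0$ and $f_i\ge 0$ for all $i$ are confirmed, Theorem~\ref{general-lp-max-codes} gives $\mathcal{A}(23,2/5;T)\le f(1)/f_0$. The final step is the arithmetic check that $f(1)/f_0 = 11178$, matching $|C_{\sqrt6/6}|$, which combined with the lower bound supplied by the code $C_{\sqrt6/6}$ itself (which is $T$-avoiding since $T$ misses $I(C_{\sqrt6/6})$, and has $s(C_{\sqrt6/6})=2/5$) yields equality and hence maximality. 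The case $T=(-1/5,1/2)$... wait, $T=(-1/2,-1/5)$: here $[-1,2/5]\setminus T=[-1,-1/2]\cup[-1/5,2/5]$, and the construction is entirely parallel, with the square factor now placed at the root $-1/5$ (the left endpoint of the right component) or arranged so the degree-$5$ product is $\le 0$ on both pieces; again I would present $f$, its positive Gegenbauer coefficients, and verify $f(1)/f_0=11178$.

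The main obstacle I anticipate is purely the construction and verification of the polynomials — specifically, finding the right placement of the squared factor(s) and possibly a small positive shift or an extra factor so that \emph{both} (A1) holds on the disconnected complement \emph{and} all Gegenbauer coefficients come out nonnegative. These two requirements pull in opposite directions (making $f$ more negative on the forbidden region tends to inject negative low-order Gegenbauer mass), so some trial is needed; there is no conceptual difficulty, but the feasibility of (A1)+(A2) simultaneously is not automatic and is where the content of the theorem really lies. Since the excerpt states these computations are deferred to the Appendix, I would structure the proof here as: state the two polynomials, assert (A1) by a one-line sign analysis, assert (A2) by displaying the coefficients, compute $f(1)/f_0=11178$, invoke the lower bound from $C_{\sqrt6/6}$, and refer to the Appendix for the full Gegenbauer expansions — mirroring the presentation style already established for Theorems~\ref{93150-codes-t1} and, in detail, \ref{47104-codes-t1}.
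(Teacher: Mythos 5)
Your proposal is correct and follows essentially the same route as the paper: for each $T$ one applies Theorem~\ref{general-lp-max-codes} with a degree-$5$ polynomial vanishing on $I(C_{\sqrt6/6})=\{-1/2,-1/5,1/10,2/5\}$ with one factor squared, checks (A1) by a sign analysis and (A2) by the explicit Gegenbauer coefficients, and computes $f(1)/f_0=11178$; the polynomial you first write down for $T=(-1/5,1/10)$, namely $\left(t+\tfrac12\right)^2\left(t+\tfrac15\right)\left(t-\tfrac1{10}\right)\left(t-\tfrac25\right)$, is exactly the one in the paper's Appendix. One small correction to your heuristic: the squared factor must be placed at the root lying in the \emph{interior} of a component of $[-1,s]\setminus T$ (here $-1/2$ for the first $T$, and $1/10$ --- not $-1/5$ --- for $T=(-1/2,-1/5)$), since squaring at a boundary root such as $-1/5$ would flip the sign to positive on $[-1/5,1/10]$ and violate (A1).
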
 

We next prove that the code $C_{\sqrt{6}/12}$ is a maximal $(-13/23,-7/23) \cup (5/23,6/23)$-avoiding $(11/23)$-code. 
The proof contains an additional trick based on the fact that $M_7(C_{\sqrt{6}/12})=0$ while $C_{\sqrt{6}/12}$ itself is a non-antipodal spherical $5$-design. 

\begin{thm} \label{48600-codes-t}
We have $\mathcal{A}\left(23,11/23;\left(-13/23,-7/23\right) \cup \left(5/23,6/23\right)\right)=48600$. Consequently, $C_{\sqrt{6}/12}$ is a maximal $\left(-13/23,-7/23\right) \cup \left(5/23,6/23\right)$-avoiding spherical $(11/23)$-code. 
\end{thm}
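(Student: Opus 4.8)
The plan is to obtain the upper bound $\mathcal{A}(23,11/23;T)\le 48600$ from Theorem~\ref{general-lp-max-codes} and to combine it with the trivial lower bound supplied by $C_{\sqrt{6}/12}$ itself: that code has $48600$ points, maximal inner product $s=11/23$, and $I(C_{\sqrt{6}/12})=\{-13/23,-7/23,-1/23,5/23,11/23\}$ is disjoint from $T=(-13/23,-7/23)\cup(5/23,6/23)$, so it is a $T$-avoiding $11/23$-code and hence $\mathcal{A}(23,11/23;T)\ge 48600$.

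For the upper bound, the admissible region is $[-1,11/23]\setminus T=[-1,-13/23]\cup[-7/23,5/23]\cup[6/23,11/23]$. A polynomial that makes the bound tight must vanish on $I(C_{\sqrt{6}/12})$; since $-1/23$ is interior to the admissible region it must be a zero of even order, whereas $-13/23,-7/23,5/23,11/23$ are endpoints of admissible intervals and may be simple zeros. A sign/parity count over the admissible region then forces $f$ to have an extra zero inside the forbidden gap $(5/23,6/23)$ (in particular no degree-$6$ polynomial can satisfy~(A1)), so I would take the degree-$7$ polynomial
\[ f(t)=\left(t+\frac{13}{23}\right)\left(t+\frac{7}{23}\right)\left(t+\frac{1}{23}\right)^{2}\left(t-\frac{5}{23}\right)\left(t-\frac{11}{23}\right)(t-c) \]
with a single free node $c\in(5/23,6/23)$. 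Since the leading coefficient is positive and $\deg f=7$, the sign of $f$ on the seven intervals cut out by its six distinct zeros is $-,+,-,-,+,-,+$ (no sign change at the double zero $-1/23$); reading this off gives $f\le 0$ on $[-1,-13/23]$, on $[-7/23,5/23]$, and, as long as $c<6/23$, on $[6/23,11/23]$, so condition~(A1) holds for every such $c$.

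The node $c$ is where the hypothesis $M_7(C_{\sqrt{6}/12})=0$ enters. Because $C_{\sqrt{6}/12}$ is a spherical $5$-design but not a $6$-design, the equality characterization in Theorem~\ref{general-lp-max-codes} shows that any admissible $f$ with $f(1)/f_0=48600$ must have $f_6=0$ (a ``gap'' at level $6$), while $M_7(C_{\sqrt{6}/12})=0$ is precisely what then permits $f_7>0$ without forcing extremal codes to be $7$-designs. As $f$ is affine in $c$, the equation $f_6=0$ — equivalently $\int_{-1}^{1}f(t)P_6^{(23)}(t)\,d\mu_{23}(t)=0$ by~\eqref{eq:orthorep} — has a unique solution, and one checks it lies in $(5/23,6/23)$. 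For this $c$ I would write out $f=\sum_{i=0}^{7}f_iP_i^{(23)}$ explicitly and verify~(A2): $f_0>0$, $f_1,\dots,f_5\ge 0$, $f_6=0$, $f_7\ge 0$ (the precise coefficients belong in the Appendix). To finish, rather than computing $f(1)/f_0$ by hand it is cleanest to feed $f$ and $C_{\sqrt{6}/12}$ into identity~\eqref{main-id}: the sum $\sum_{x\ne y}f(\langle x,y\rangle)$ vanishes because $f$ is zero on $I(C_{\sqrt{6}/12})$, and $\sum_{i\ge 1}f_iM_i(C_{\sqrt{6}/12})$ vanishes because $M_1=\dots=M_5=0$, $f_6=0$, and $M_7(C_{\sqrt{6}/12})=0$; this yields $f(1)\cdot 48600=f_0\cdot 48600^2$, i.e.\ $f(1)/f_0=48600$. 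Then Theorem~\ref{general-lp-max-codes} gives $\mathcal{A}(23,11/23;T)\le 48600$, and equality follows.

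The main obstacle is the middle step: pinning down $c$ from the linear condition $f_6=0$ and then confirming that all of $f_1,\dots,f_5,f_7$ are nonnegative for that value of $c$. This is a finite but somewhat delicate calculation, and the whole scheme genuinely needs $M_7(C_{\sqrt{6}/12})=0$ — the sign analysis pushes one up to degree $7$, and only the gap $f_6=0$, licensed by $M_7(C_{\sqrt{6}/12})=0$, keeps the bound attainable by a mere spherical $5$-design.
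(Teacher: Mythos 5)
Your approach is essentially the same as the paper's: use Theorem~\ref{general-lp-max-codes} with a degree-$7$ polynomial vanishing on $I(C_{\sqrt{6}/12})$ (with a double zero at the interior node $-1/23$) plus one free zero $t=c$, and determine $c$ from $f_6=0$, which is forced by the equality conditions because $C_{\sqrt{6}/12}$ is a $5$-design but not a $6$-design, while $M_7(C_{\sqrt{6}/12})=0$ makes $f_7>0$ harmless. That matches the paper's remark exactly, and your use of identity~\eqref{main-id} to read off $f(1)/f_0=48600$ without arithmetic is a nice streamlining.

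One small inaccuracy to flag: you predict that the solution of $f_6=0$ lies strictly inside $(5/23,6/23)$ and you require $c<6/23$ for (A1). In fact $c=6/23$ exactly, the right endpoint of the gap. This is not a problem for (A1): $6/23\notin T$ is an admissible point, but $f(6/23)=0\leq 0$ and $f<0$ on $(6/23,11/23)$, so the condition still holds; you should relax your constraint to $c\in(5/23,6/23]$. Had you carried out the computation with the strict-interval expectation you would have hit this and needed to notice the boundary case is fine. The paper's comment that ``the extra zero $6/23$ not only provides $f_6=0$ but also somehow reduces the forbidden interval'' refers precisely to this coincidence: the node determined by $f_6=0$ lands on the boundary, which is why the forbidden set can be taken as $(5/23,6/23)$ rather than all the way to $11/23$.
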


\begin{proof}
Consider the 7th degree polynomial
\[ f(t)= \left(t + \frac{13}{23}\right) \left(t + \frac{7}{23}\right)
\left(t+\frac{1}{23}\right)^2 \left(t-\frac{5}{23}\right) 
\left(t-\frac{6}{23}\right) \left(t-\frac{11}{23}\right).  \]
It satisfies (A1) with the above $T$. It also satisfies (A2) since its Gegenbauer expansion $f(t)=\sum_{i=0}^{7} f_iP_i^{(23)}(t)$ has
\[ f_0=\frac{235008}{17024127235}, \ f_1=\frac{125781728}{965934175725}, \ 
f_2=\frac{3768160}{1332323001}, \ f_3=\frac{85802112}{6289426475}, \] 
\[ f_4=\frac{10208}{547515}, f_5=\frac{23872}{138069}, \ f_6=0, \ 
f_7=\frac{416}{899}. \]
Finally, $f(1)=2284277760/3404825447$ and, consequently,  
\[ 48600 \leq \mathcal{A}\left(23,\frac{11}{23};
\left(-\frac{13}{23},-\frac{7}{23}\right) \cup \left(\frac{5}{23},\frac{6}{23}\right)\right) \leq \frac{f(1)}{f_0}=48600. \] 

This bound is attained by the code $C_{\sqrt{6}/12}$. This code is a non-antipodal $5$-design but has $M_7(C_{\sqrt{6}/12})=0$ (here $f_6=0$ comes into play). The extra (not in $I(C_{\sqrt{6}/12})$) zero $6/23$ not only provides $f_6=0$ but also somehow reduces the forbidden interval.   
\end{proof}

\subsection{Codes on \texorpdfstring{$\mathbb{S}^{21}$}{S21}}

On $\mathbb{S}^{21}$ we consider the codes $C_{2816}$ and $C_{2025}$. 

\begin{thm} \label{2816-codes-t1}
We have $\mathcal{A}\left(22,1/3;\left(-1,-1/3\right)\right)=
\mathcal{A}\left(22,1/3; \left(-1/3,0\right)\right)=2816$.
Consequently, $C_{2816}$ is a maximal $\left(-1,-1/3\right)$-avoiding spherical $(1/3)$-code and a maximal $(-1/3,0)$-avoiding spherical $(1/3)$-code. 
In particular, $C_{2816}$ is a maximal antipodal $(1/3)$-code (Theorem 6 in \cite{B1993}). 
\end{thm}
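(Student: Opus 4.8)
The plan is to apply Theorem~\ref{general-lp-max-codes} separately to each of the two forbidden sets $T\in\{(-1,-1/3),(-1/3,0)\}$, producing in each case the upper bound $\mathcal{A}(22,1/3;T)\le 2816$, and then to match it with the trivial lower bound $\mathcal{A}(22,1/3;T)\ge |C_{2816}|=2816$: the latter holds because $s(C_{2816})=1/3$ and $I(C_{2816})=\{-1,-1/3,0,1/3\}$ is disjoint from both open intervals, so $C_{2816}$ is an admissible competitor in both problems.

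For $T=(-1/3,0)$ the condition (A1) has to be checked on $[-1,1/3]\setminus T=[-1,-1/3]\cup[0,1/3]$, and I would use the degree-$4$ polynomial
\[
f(t)=(t+1)\,t\left(t-\frac13\right)\left(t+\frac13\right),
\]
whose zero set $\{-1,-1/3,0,1/3\}$ coincides with $I(C_{2816})$. A short sign analysis of the four linear factors gives $f(t)\le 0$ on each of $[-1,-1/3]$ and $[0,1/3]$, so (A1) holds. Using $\int t^2\,d\mu_{22}=1/22$ and $\int t^4\,d\mu_{22}=3/(22\cdot 24)$ one computes $f_0=1/1584>0$ and $f(1)=16/9$, hence $f(1)/f_0=2816$; it then remains to write out the Gegenbauer expansion $f=\sum_{i=0}^{4}f_iP_i^{(22)}$ and verify $f_1,\dots,f_4\ge0$, which I would do explicitly and record (together with the analogous computations for the other case) in the Appendix, following the convention of the earlier proofs.

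For $T=(-1,-1/3)$, (A1) must instead hold on $\{-1\}\cup[-1/3,1/3]$. Here multiplying by the single factor $t+1$ is not enough, since the cubic $t(t^2-1/9)$ changes sign at $t=0$ inside $[-1/3,1/3]$; instead I would take the degree-$5$ polynomial
\[
f(t)=(t+1)\,t^2\left(t-\frac13\right)\left(t+\frac13\right),
\]
again with zero set $\{-1,-1/3,0,1/3\}$ (now $0$ being a double zero). On $[-1/3,1/3]$ the product $(t+1)\,t^2\,(t+1/3)$ is nonnegative while $t-1/3\le 0$, so $f\le 0$ there, and $f(-1)=0$, so (A1) holds. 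Its even part agrees with that of the previous polynomial, so once more $f_0=1/1584$, $f(1)=16/9$ and $f(1)/f_0=2816$; one then checks $f_1,\dots,f_5\ge 0$ in the Gegenbauer expansion for $n=22$. Equality in the bound is consistent with Theorem~\ref{general-lp-max-codes} because $\deg f\le 5$ and $C_{2816}$ is a spherical $5$-design, so $f_iM_i(C_{2816})=0$ automatically.

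Finally, for the ``in particular'' assertion: if $C\subset\mathbb{S}^{21}$ is an antipodal $(1/3)$-code, then $I(C)\setminus\{-1\}$ is symmetric about $0$ and bounded above by $1/3$, hence contained in $[-1/3,1/3]$; in particular $I(C)\cap(-1,-1/3)=\emptyset$, so every antipodal $(1/3)$-code is automatically $(-1,-1/3)$-avoiding. Thus $|C|\le\mathcal{A}(22,1/3;(-1,-1/3))=2816$, and since $C_{2816}$ realizes this value, it is a maximal antipodal $(1/3)$-code, recovering Theorem~6 of~\cite{B1993}. The only genuinely computational step --- and the place where the dimension $22$ and the inner products $\{-1,-1/3,0,1/3\}$ must line up --- is the verification of (A2), i.e.\ nonnegativity of all Gegenbauer coefficients of the two explicit polynomials; the sign analyses above and the evaluations of $f_0$ and $f(1)$ are elementary.
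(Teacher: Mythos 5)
Your proposal is correct and takes essentially the same LP approach as the paper. For $T=(-1,-1/3)$ your degree-$5$ polynomial $f(t)=(t+1)t^2(t-1/3)(t+1/3)$ is exactly the one the paper uses (Appendix~A1: $f_0=1/1584$, $f_1=19/1872$, $f_2=49/429$, $f_3=31/144$, $f_4=161/208$, $f_5=69/104$). The one place you deviate is $T=(-1/3,0)$, where you propose the degree-$4$ polynomial $g(t)=(t+1)t(t-1/3)(t+1/3)$ whereas the paper takes its degree-$5$ multiple $(t+1)g(t)$; your choice also works, since a direct computation gives $g=\tfrac{1}{1584}P_0^{(22)}+\tfrac{1}{72}P_1^{(22)}+\tfrac{49}{429}P_2^{(22)}+\tfrac{7}{8}P_3^{(22)}+\tfrac{161}{208}P_4^{(22)}$, all coefficients nonnegative, so (A2) holds and $f(1)/f_0=(16/9)/(1/1584)=2816$ as you note. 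Your derivation of the antipodal corollary (every antipodal $(1/3)$-code is automatically $(-1,-1/3)$-avoiding) is a clean justification of the "in particular" claim that the paper leaves implicit.
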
 

The next result is an analog of Example 9.3 from the paper of Delsarte, Goethals, and Seidel \cite{DGS} about 3-distance codes. 

\begin{thm} \label{2025-codes-t2}
We have $\mathcal{A}\left(22,7/22;\left(-4/11,-1/44\right)\right)=2025$. 
Consequently, $C_{2025}$ is a maximal $(-4/11,-1/44)$-avoiding spherical $(7/22)$-code. 
\end{thm}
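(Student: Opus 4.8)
The plan is to mimic the Delsarte--Goethals--Seidel three-distance argument (Example 9.3 in \cite{DGS}), applying Theorem \ref{general-lp-max-codes} with the cubic polynomial built from the three inner products of $C_{2025}$. Set $\alpha=-4/11$, $\beta=-1/44$, $\gamma=7/22$ and take $f(t)=(t-\alpha)(t-\beta)(t-\gamma)$. One checks that the triple $(\alpha,\beta,\gamma)$ satisfies the hypotheses \eqref{DGSCond} in dimension $n=22$: namely $\alpha+\beta+\gamma\le 0$, $\alpha\beta+\beta\gamma+\gamma\alpha\ge -3/(n+2)=-1/8$, and $\alpha+\beta+\gamma+n\alpha\beta\gamma<0$. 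These are routine arithmetic verifications with small fractions. Then the DGS bound \eqref{DGS_3bound} gives
\[
\mathcal{A}\!\left(22,\tfrac{7}{22};\left(-\tfrac{4}{11},-\tfrac{1}{44}\right)\right)\le -\frac{22\,(1-\alpha)(1-\beta)(1-\gamma)}{\alpha+\beta+\gamma+22\,\alpha\beta\gamma},
\]
and I expect this to evaluate to exactly $2025$.

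To carry this out concretely within the present framework, first I would record the Gegenbauer expansion $f(t)=\sum_{i=0}^{3}f_iP_i^{(22)}(t)$ and verify condition (A2), i.e. $f_0>0$ and $f_1,f_2,f_3\ge 0$. Since $f$ has degree $3$ and positive leading coefficient with all three roots in $(-1,1)$, the positivity of the top coefficients $f_2,f_3$ is automatic, and $f_0>0$, $f_1\ge 0$ reduce to the inequalities in \eqref{DGSCond}; this is exactly the content of the DGS derivation. Condition (A1) — that $f(t)\le 0$ on $[-1,7/22]\setminus(-4/11,-1/44)$ — is immediate from the sign pattern of the cubic: on $[-1,\alpha]$ all three factors are $\le 0$ so $f\le 0$, and on $[\beta,\gamma]$ we have $(t-\alpha)>0$, $(t-\beta)\ge 0$, $(t-\gamma)\le 0$, so again $f\le 0$. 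Then Theorem \ref{general-lp-max-codes} yields the upper bound $f(1)/f_0$, which I compute and confirm equals $2025$. For the lower bound, the code $C_{2025}\subset\mathbb{S}^{21}$ described in Section \ref{Codes_21} has exactly these three inner products and cardinality $2025$, so it is $(-4/11,-1/44)$-avoiding with $s(C_{2025})=7/22$, giving $\mathcal{A}(22,7/22;(-4/11,-1/44))\ge 2025$. Combining the two bounds proves equality and shows $C_{2025}$ is maximal.

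The main obstacle is purely computational: confirming that the DGS bound \eqref{DGS_3bound} with these particular fractions collapses exactly to the integer $2025$, and producing the explicit nonnegative Gegenbauer coefficients $f_0,f_1,f_2,f_3$ in dimension $22$. There is no conceptual difficulty beyond checking \eqref{DGSCond}; the middle inequality $\alpha\beta+\beta\gamma+\gamma\alpha\ge -1/8$ is the one most likely to be tight or delicate, so I would verify it first. I would relegate the exact values of $f(1)$, $f_0$, and the coefficients to the Appendix, in keeping with the stated convention of the paper, and note in the body only that $C_{2025}$ realizes the bound so the linear programming estimate is sharp.
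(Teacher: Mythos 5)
Your proposal is correct and follows essentially the same linear programming route as the paper, with one small difference: you use the cubic $f(t)=(t-\alpha)(t-\beta)(t-\gamma)$ following the classical Delsarte--Goethals--Seidel Example~9.3, whereas the paper's Appendix multiplies by the extra factor $(t+1)$ and uses the degree-$4$ polynomial $(t+1)(t+\tfrac{4}{11})(t+\tfrac{1}{44})(t-\tfrac{7}{22})$. Both polynomials satisfy (A1) and (A2) and yield the same ratio $f(1)/f_0 = 2025$ (one can check directly that $f_0 = f_1/n$ for the cubic, so multiplying by $(1+t)$ does not change the bound), and the arithmetic you outline — $(1-\alpha)(1-\beta)(1-\gamma)=10125/10648$, $\alpha+\beta+\gamma+22\alpha\beta\gamma=-5/484$, hence the DGS bound \eqref{DGS_3bound} gives $2025$ — all checks out.

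One small inaccuracy in your reasoning: you assert that nonnegativity of $f_2$ is ``automatic'' because the three roots lie in $(-1,1)$, but this is not so. Writing $e_1=\alpha+\beta+\gamma$, one has $f_2 = -\tfrac{(n-1)}{n}e_1$, so $f_2\ge 0$ is precisely the first inequality $\alpha+\beta+\gamma\le 0$ of \eqref{DGSCond}, not an automatic consequence (e.g.\ $\alpha=\beta=\gamma=1/2$ has all roots in $(-1,1)$ yet $f_2<0$). Only $f_3>0$ is automatic from the positive leading coefficient. The three conditions in \eqref{DGSCond} correspond exactly to $f_2\ge 0$, $f_1\ge 0$, and $f_0>0$, respectively; since you verify all three of \eqref{DGSCond} anyway (here $e_1=-3/44$, $e_2=-111/968 > -1/8$, $e_1+22e_3=-5/484$), the proof is sound, but the attribution should be corrected.
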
 

\subsection{Codes on \texorpdfstring{$\mathbb{S}^{15}$}{S15}}

Here we consider the code $C_{BW}$.   

\begin{thm} \label{4320-codes-t4}
We have $\mathcal{A}\left(16,1/2;\left(-1/2,-1/4\right) \cup \left(0,1/4\right)\right)=4320$.
Consequently, $C_{BW}$ is a maximal $(-1/2,-1/4) \cup (0,1/4)$-avoiding spherical $(1/2)$-code. 
\end{thm}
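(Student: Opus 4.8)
The plan is to invoke the linear programming bound of Theorem~\ref{general-lp-max-codes} in dimension $n=16$ with $s=1/2$ and $T=(-1/2,-1/4)\cup(0,1/4)$, and to exhibit a test polynomial for which $C_{BW}$ is the extremal configuration. The admissible set of inner products is
\[ [-1,1/2]\setminus T \;=\; [-1,-1/2]\cup[-1/4,0]\cup[1/4,1/2], \]
and $I(C_{BW})=\{-1,-1/2,-1/4,0,1/4,1/2\}$ consists precisely of the six endpoints of these three closed subintervals; in particular $C_{BW}$ is a $T$-avoiding $1/2$-code of cardinality $4320$, so only the upper bound $\mathcal{A}(16,1/2;T)\le 4320$ requires proof. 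Note that this is, point for point, the same configuration of $(I(C),T,s)$ as in Theorem~\ref{93150-codes-t1}, the only difference being the ambient dimension; so I would reuse the test polynomial from there and merely recompute its Gegenbauer expansion for $n=16$.

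For the polynomial I would take $f$ of degree at most $7$ (the strength of $C_{BW}$ as a spherical design) vanishing at all six points of $I(C_{BW})$: the natural first choice is the degree-$6$ polynomial $f(t)=c\,(t+1)(t+1/2)(t+1/4)\,t\,(t-1/4)(t-1/2)$ with $c>0$, and, should this fail condition~(A2), the degree-$7$ variant obtained by replacing $(t+1)$ with $(t+1)^2$ — one checks that squaring any of the other five roots flips the sign of $f$ the wrong way on one of the three admissible subintervals, so $(t+1)^2$ is the only safe squaring. Condition~(A1), $f\le 0$ on $[-1,1/2]\setminus T$, is then immediate from a root count: with positive leading coefficient the number of roots of $f$ (with multiplicity) lying strictly to the right of a point of $[-1,-1/2]$, of $[-1/4,0]$, and of $[1/4,1/2]$ is $5$, $3$, and $1$ respectively, so $f\le 0$ there, while $f\ge 0$ on the two forbidden intervals and $f(1)>0$.

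Condition~(A2) — $f_0>0$ and $f_i\ge 0$ for every $i\ge 1$ — I would verify by writing out the explicit Gegenbauer expansion $f(t)=\sum_i f_iP_i^{(16)}(t)$ via~\eqref{eq:orthorep}, exactly as is done for the other theorems in this section. To identify the bound as $4320$ I would use the quadrature formula~\eqref{QF} for the spherical $7$-design $C_{BW}$: since $\deg f\le 7$ and $f$ vanishes on $I(C_{BW})$, the only surviving term of $|C_{BW}|\,f_0=\sum_i m_i f(\alpha_i)$ is the node $\alpha=1$ with multiplicity $1$, whence $f(1)/f_0=|C_{BW}|=4320$ (alternatively one just computes $f(1)$ and $f_0$ directly). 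Theorem~\ref{general-lp-max-codes} then gives $\mathcal{A}(16,1/2;T)\le 4320$, and combined with the code $C_{BW}$ this yields equality and the maximality of $C_{BW}$.

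The only genuine obstacle I anticipate is the verification of~(A2): one must choose the multiplicities of the roots of $f$ so that \emph{all} of its Gegenbauer coefficients in dimension $16$ turn out non-negative. This is a finite, purely computational matter, but it does require hitting the right polynomial; the $n=23$ analogue in Theorem~\ref{93150-codes-t1} pins down what to expect, and inserting the extra factor $(t+1)^2$ — or, in the spirit of the trick in Theorem~\ref{48600-codes-t}, an extra root outside $I(C_{BW})$ kept within total degree $7$ — is the standard remedy if a coefficient comes out negative. Note finally that, although $C_{BW}$ is antipodal and hence $M_i(C_{BW})=0$ for odd $i$, condition~(A2) still demands $f_i\ge 0$ for odd $i$ as well, so the sign check of the expansion must be carried out for all degrees.
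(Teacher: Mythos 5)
Your proposal is correct and matches the paper's proof: the paper applies Theorem~\ref{general-lp-max-codes} with exactly the degree-$6$ polynomial $f(t)=t(t+1)(t+\tfrac12)(t+\tfrac14)(t-\tfrac14)(t-\tfrac12)$, whose Gegenbauer coefficients in $n=16$ are all positive with $f(1)/f_0=4320$, and it also remarks that $(t+1)f(t)$ works. Your sign analysis for (A1), the reduction to computing the expansion for (A2), and the observation that only $(t+1)$ can safely be squared all agree with the paper.
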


\section{Minimum \texorpdfstring{$T$}{T}-avoiding designs (tight \texorpdfstring{$T$}{T}-avoiding designs)}

We first consider in detail the derived code $C_{1/4} \subset \mathbb{S}^{22}$. Let $T_1:=(-3/5,-1/3) \cup (1/5,7/15)$. We prove that $C_{1/4}$ is a minimal $T_1$-avoiding spherical $7$-design on $\mathbb{S}^{22}$. 

\begin{thm} \label{47104-des-t1}
We have $\mathcal{B}(23,6;T_1) = \mathcal{B}(23,7;T_1)=47104$. 
Consequently, the code $C_{1/4}$ is a tight $T_1$-avoiding spherical $7$-design. 
\end{thm}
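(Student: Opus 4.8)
The plan is to apply the linear programming framework of Theorem \ref{general-lp-des} twice — once for $\tau=7$ and once for $\tau=6$ — and in each case to exhibit an explicit polynomial $f$ of degree $\leq \tau$ satisfying conditions (B1) and (B2) with $f(1)/f_0 = 47104$, so that $\mathcal{B}(23,7;T_1) \geq 47104$ and $\mathcal{B}(23,6;T_1) \geq 47104$. Since the code $C_{1/4}$ is a $T_1$-avoiding spherical $7$-design (hence also a $6$-design) of cardinality $47104$, as recorded in Section \ref{Codes_22}, it certifies both upper bounds, and the two inequalities sandwich the value. Concretely, for the $\tau=7$ bound I would take $f$ to be (a positive scalar multiple of) a degree-$7$ polynomial that is nonnegative on $[-1,1]\setminus T_1$ and whose zero set contains $I(C_{1/4}) = \{-3/5,-1/3,-1/15,1/5,7/15\}$; the natural candidate is
\[
f(t) = \left(t+\frac{3}{5}\right)\left(t+\frac{1}{3}\right)\left(t+\frac{1}{15}\right)^2\left(t-\frac{1}{5}\right)\left(t-\frac{7}{15}\right),
\]
where the squared factor at the interior zero $-1/15$ (which is the point where, away from $T_1$, $f$ must not change sign) guarantees $f(t)\geq 0$ on $[-1,1]\setminus T_1$: inside the removed intervals $(-3/5,-1/3)$ and $(1/5,7/15)$ the polynomial is allowed to be negative, and the simple zeros at the endpoints $-3/5,-1/3,1/5,7/15$ together with the sign at $t=-1$ and $t=1$ make the sign pattern work out. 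One then computes the Gegenbauer expansion $f(t) = \sum_{i=0}^7 f_i P_i^{(23)}(t)$, verifies $f_0>0$ and checks (B2), namely $f_i\leq 0$ for $i\geq\tau+1$ — but since $\deg f = 7 = \tau$ there are no such coefficients and (B2) is automatic (as the authors remark after Theorem \ref{general-lp-des}). Finally one checks the arithmetic identity $f(1)/f_0 = 47104$.

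For the $\tau=6$ bound the situation is genuinely more delicate, and this is where I expect the main obstacle to lie: a priori a degree-$\leq 6$ polynomial vanishing at the five inner products of $C_{1/4}$ plus being nonnegative off $T_1$ has essentially no freedom left, and one must check that such an $f$ still produces the value $47104$ rather than something smaller. The trick — entirely analogous to the one used in the proof of Theorem \ref{48600-codes-t}, where $M_7(C_{\sqrt6/12})=0$ is exploited — is that $C_{1/4}$ has $M_9(C_{1/4})=0$ (noted in Section \ref{Codes_22}, since $C_{1/2}$ and $C_0$ are antipodal and the three derived codes share this feature). More usefully here, since $C_{1/4}$ is a $7$-design we have $M_i(C_{1/4})=0$ for $i\leq 7$; the relevant point is that there may be a degree-$6$ polynomial $f$ vanishing on $I(C_{1/4})$ with a double zero placed so that (B1) holds on $[-1,1]\setminus T_1$, and with $f(1)/f_0=47104$ — the five prescribed zeros use up degree $5$, leaving one extra real zero to be positioned as a double point (forcing the polynomial to touch zero without crossing) at the single "wrong-sign" location, which would require degree $6$ if one of the boundary zeros is dropped. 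The cleanest route is: take $f$ of degree $6$ of the form $(t-a)^2$ times a degree-$4$ factor carrying four of the five zeros of $I(C_{1/4})$, or alternatively reuse the degree-$7$ polynomial above after dividing out a linear factor that is nonnegative on $[-1,7/15]$ — but $(t+3/5)$ and $(t+1/3)$ are not sign-definite there, so one instead divides by $(7/15 - t)$, i.e. considers $g(t) = -f(t)/(t-7/15) = (t+3/5)(t+1/3)(t+1/15)^2(t-1/5)$ of degree $5$, which is nonnegative on $[-1,7/15]\setminus T_1$ and whose Gegenbauer coefficients can be computed directly; if $g$ has all the right signs with $\deg g = 5 \leq 6$ and $g(1)/g_0 = 47104$, it settles the $\tau=6$ (indeed $\tau=5$) case.

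The remaining steps are routine bookkeeping: (i) produce the exact rational Gegenbauer coefficients for the chosen polynomial(s) — these would be displayed in the Appendix, as the authors do for their other theorems — and confirm nonnegativity of $f_0$ and of the low-order $f_i$; (ii) confirm the sign condition (B1) by a finite check of the sign of $f$ on each subinterval of $[-1,1]$ cut out by the real zeros of $f$ and the endpoints of $T_1$; (iii) verify $f(1)/f_0 = 47104$ by exact arithmetic; and (iv) invoke the equality clause of Theorem \ref{general-lp-des} to conclude that any $T_1$-avoiding $\tau$-design meeting the bound has its inner products among the zeros of $f$, matching $C_{1/4}$, and hence that $C_{1/4}$ is tight. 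The genuine content — and the only place where something could go wrong — is step (ii) for the degree-$6$ (or degree-$5$) polynomial: one must be certain that removing a zero or lowering the degree does not introduce a sign violation on $[-1,1]\setminus T_1$, and that the double zero is placed at the unique location where $f$ would otherwise change sign the wrong way. I would resolve this by the explicit sign analysis above, following the pattern of Theorem \ref{48600-codes-t}.
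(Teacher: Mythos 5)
Your proposed polynomial
\[
f(t) = \left(t+\tfrac{3}{5}\right)\left(t+\tfrac{1}{3}\right)\left(t+\tfrac{1}{15}\right)^2\left(t-\tfrac{1}{5}\right)\left(t-\tfrac{7}{15}\right)
\]
is exactly the one the paper uses, but you have miscounted its degree: this $f$ has degree $1+1+2+1+1 = 6$, not $7$. This single miscount is the source of most of the unnecessary complication in your write-up. Because $\deg f = 6$, the polynomial works directly for the $\tau = 6$ case (condition (B2) is vacuous), yielding $\mathcal{B}(23,6;T_1) \geq f(1)/f_0 = 47104$. The $\tau = 7$ case then follows with no new polynomial at all, from the trivial inequality $\mathcal{B}(23,7;T_1) \geq \mathcal{B}(23,6;T_1)$ (every $7$-design is a $6$-design, so the minimum is taken over a smaller set). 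The matching upper bound for both comes from $C_{1/4}$ being a $T_1$-avoiding $7$-design of cardinality $47104$. That is the entirety of the paper's proof, which is much shorter than you anticipated.

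Consequently, the long discussion of the "genuinely more delicate" $\tau=6$ case is vacuous, and the alternative degree-$5$ polynomial you propose there is in fact wrong. First there is a sign slip: $-f(t)/(t-7/15) = -\left(t+\tfrac{3}{5}\right)\left(t+\tfrac{1}{3}\right)\left(t+\tfrac{1}{15}\right)^2\left(t-\tfrac{1}{5}\right)$, with the leading minus sign. More importantly, this $g$ fails condition (B1): on $(7/15,\,1]$ all four linear-type factors are positive, so $g(t) = f(t)/(7/15-t) < 0$ there (in particular $g(1)<0$). Dividing $f$ by $(7/15-t)$ necessarily flips the sign on the part of $[-1,1]$ lying to the right of $7/15$, which (B1) does not tolerate. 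None of this is needed; stick with the degree-$6$ polynomial and the monotonicity of $\mathcal{B}$ in $\tau$.
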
 

\begin{proof}
We apply Theorem \ref{general-lp-des} with the polynomial
\[ f(t)= \left(t + \frac{3}{5}\right) \left(t+\frac{1}{3}\right) \left(t+\frac{1}{15}\right)^2 \left(t-\frac{1}{5}\right) \left(t-\frac{7}{15}\right). \]
Since $f(t) \geq 0$ for $t \in [-1,1] \setminus T_1$ and $\deg{f} = 6$, the conditions (B1) and (B2) are satisfied. 
So the inequality $\mathcal{B}(23,6;T_1) \geq 47104$ follows from $f(1)=262144/253125$ and $f_0=128/5821875$, whence $f(1)/f_0=47104=|C_{1/4}|$. 
By definition $\mathcal{B}(23,7;T_1) \geq \mathcal{B}(23,6;T_1)$ and the example of $C_{1/4}$ gives $47104 \geq \mathcal{B}(23,7;T_1)$.
\end{proof}

The details for the next theorems in this section are presented in the Appendix.

\begin{thm} \label{47104-des-t2}
Let $T_2:=(-9/15,-5/15) \cup (-1/15,3/15)$.
Then $\mathcal{B}(23,6;T_2) = \mathcal{B}(23,7;T_2) = 47104$.
Consequently, the code $C_{1/4}$ is a tight $T_2$-avoiding spherical $7$-design. 
\end{thm}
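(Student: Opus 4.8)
The plan is to run the linear programming bound of Theorem~\ref{general-lp-des} with $\tau=6$, exactly as in the proof of Theorem~\ref{47104-des-t1}, only with the double zero moved to a different point of $I(C_{1/4})$. Recall $I(C_{1/4})=\{-3/5,-1/3,-1/15,1/5,7/15\}$; rewriting $T_2=(-3/5,-1/3)\cup(-1/15,1/5)$ in these terms, $C_{1/4}$ is plainly $T_2$-avoiding, and it is a spherical $7$-design of cardinality $47104$ on $\mathbb{S}^{22}$ (Section~\ref{Codes_22}). Hence it suffices to prove the lower bound $\mathcal{B}(23,6;T_2)\ge 47104$: together with the trivial inequality $\mathcal{B}(23,7;T_2)\ge\mathcal{B}(23,6;T_2)$ and the example $C_{1/4}$ (which is a $6$- and a $7$-design at once), this yields $\mathcal{B}(23,6;T_2)=\mathcal{B}(23,7;T_2)=47104$ and the tightness claim.

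The test polynomial is forced by the equality conditions in Theorem~\ref{general-lp-des}: $f$ must vanish at each point of $I(C_{1/4})$ and be nonnegative on $[-1,1]\setminus T_2=[-1,-3/5]\cup[-1/3,-1/15]\cup[1/5,1]$. Among the five inner products, only $7/15$ lies in the interior of this allowed set (the other four are its endpoints), so $7/15$ must be a double zero and the rest simple zeros; this gives
\[
f(t)=\left(t+\frac{3}{5}\right)\left(t+\frac{1}{3}\right)\left(t+\frac{1}{15}\right)\left(t-\frac{1}{5}\right)\left(t-\frac{7}{15}\right)^{2},
\]
of degree $6=\tau$, so condition (B2) holds automatically (cf.\ the remark following Theorem~\ref{general-lp-des}).

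It remains to verify (B1) and to evaluate $f(1)/f_0$. For (B1) I would do the sign count of the four sign-changing linear factors over the consecutive intervals cut out by their roots $-3/5<-1/3<-1/15<1/5$: on $[-1,-3/5]$ the product is $(+)$, on $(-3/5,-1/3)$ it is $(-)$, on $[-1/3,-1/15]$ it is $(+)$, on $(-1/15,1/5)$ it is $(-)$, and on $[1/5,1]$ it is $(+)$, while $(t-7/15)^2\ge 0$ never changes sign; thus $f\ge 0$ precisely on $[-1,1]\setminus T_2$, with equality only at the prescribed zeros. For the quotient $f(1)/f_0$ I would apply the quadrature rule~\eqref{QF} to the $7$-design $C_{1/4}$ (legitimate since $\deg f\le 7$): because $f$ vanishes at every node coming from $I(C_{1/4})$, only the node $1$ contributes, so $|C_{1/4}|\,f_0=f(1)$, i.e.\ $f(1)/f_0=47104$; explicitly $f(1)=131072/253125$ and $f_0=64/5821875$.

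I do not anticipate a genuine obstacle: the whole argument is a routine instance of the design LP bound. The one spot where a slip is possible is the second step---correctly identifying that, for this particular $T_2$, it is $7/15$ (rather than $-1/15$, as in Theorem~\ref{47104-des-t1}) that must carry the double zero---and the accompanying sign bookkeeping in the third step. If preferred, the quadrature shortcut in the last step can be replaced by writing out the Gegenbauer expansion $f(t)=\sum_{i=0}^{6}f_iP_i^{(23)}(t)$ explicitly; this simultaneously re-confirms (B2) and pins down $f_0$.
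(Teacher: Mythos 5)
Your proof is correct and follows essentially the same route as the paper: the paper's Appendix entry for this theorem uses exactly the test polynomial $f(t)=(t+3/5)(t+1/3)(t+1/15)(t-1/5)(t-7/15)^2$ with $f(1)=131072/253125$ and $f_0=64/5821875$, giving $f(1)/f_0=47104$. Your quadrature shortcut for evaluating $f(1)/f_0$ is a legitimate and slightly cleaner way to obtain the same ratio; otherwise the argument coincides with the paper's.
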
 

We next consider the derived code $C_0 \subset \mathbb{S}^{22}$. Let 
\begin{equation}\label{T_93150des}
T \in \left\{\left(-\frac{1}{2},-\frac{1}{4}\right) \cup 
\left(\frac{1}{4},\frac{1}{2}\right), \left(-\frac{1}{4},0\right) \cup \left(\frac{1}{4},\frac{1}{2}\right), \left(-\frac{1}{2},-\frac{1}{4}\right) \cup \left(0,\frac{1}{4}\right)\right\}.
\end{equation}
This choice of $T$ will be used several times in this and the next section. 

\begin{thm} \label{93150-des-t1}
Let $T$ be any of the sets from \eqref{T_93150des}. Then $\mathcal{B}(23,7;T)=93150$. 
Consequently, the code $C_0$ is a tight $T$-avoiding spherical $7$-design
for each $T$ from \eqref{T_93150des}. 
\end{thm}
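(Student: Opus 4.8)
The plan is to apply Theorem~\ref{general-lp-des} separately for each of the three sets $T$ in \eqref{T_93150des}, exhibiting in each case a degree-$7$ polynomial $f$ with $f(1)/f_0 = 93150 = |C_0|$ that satisfies (B1), together with the verification that $C_0$ itself is a $T$-avoiding spherical $7$-design (which gives the matching upper bound $\mathcal{B}(23,7;T) \leq 93150$). Recall from Section~\ref{Codes_22} that $C_0$ is a spherical $7$-design on $\mathbb{S}^{22}$ with $I(C_0) = \{-1,-1/2,-1/4,0,1/4,1/2\}$; one checks directly that each of the three sets $T$ in \eqref{T_93150des} misses $I(C_0)$, so $C_0$ is $T$-avoiding in all three cases, and since it is a $7$-design the upper bound follows. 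The work is therefore entirely on the lower bound.

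For the lower bound I would follow the template of the proof of Theorem~\ref{47104-des-t1}: for a design LP one wants $f$ nonnegative on $[-1,1]\setminus T$ with $\deg f \le 7$ (so (B2) is automatic, as the remark after Theorem~\ref{general-lp-des} notes), and equality in Theorem~\ref{general-lp-des} forces $I(C)$ to lie among the zeros of $f$. So in each case $f$ should vanish (to order at least one, and to order two at the endpoints of the forbidden intervals that are to be ``bridged'') at all six points of $I(C_0)$. A natural ansatz: since $\deg f = 7$ and $|I(C_0)\setminus\{-1\}| = 5$ plus the extra zero $-1$ accounts for $6$ roots, one more root is free and should be placed at one endpoint of an interval of $T$ to make $f$ change sign correctly there while keeping $f \ge 0$ off $T$. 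Concretely, for $T = (-1/2,-1/4)\cup(0,1/4)$ I would try
\[
f(t) = (t+1)\left(t+\tfrac12\right)\left(t+\tfrac14\right)\,t^2\left(t-\tfrac14\right)\left(t-\tfrac12\right),
\]
checking that the sign pattern of the seven linear factors makes $f\ge 0$ on $[-1,-1/2]\cup[-1/4,0]\cup[1/4,1]$ and $f\le 0$ precisely on the two forbidden intervals; for the other two sets one shifts the squared factor to the endpoint $-1/4$ (resp. $0$) of the bridged interval, e.g. $f(t) = (t+1)(t+1/2)(t+1/4)^2\,t(t-1/4)(t-1/2)$ for $T = (-1/4,0)\cup(1/4,1/2)$. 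Then one computes the Gegenbauer coefficients $f_i^{(23)}$ via \eqref{eq:orthorep} and verifies $f(1)/f_0 = 93150$.

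The main obstacle — as in all the theorems of this section — is not conceptual but the bookkeeping: one must (i) confirm the sign condition (B1) holds on the \emph{closed} complement $[-1,1]\setminus T$ for the chosen $f$, which requires correctly tracking an even/odd pattern of simple and double roots, and (ii) carry out the explicit Gegenbauer expansion in dimension $n=23$ and check that $f(1)/f_0$ comes out to exactly $93150$. If the first naive ansatz gives the wrong constant $f(1)/f_0$, the fix is to reposition the single free zero among the available endpoints of $T$ (there are two choices per set) or to rescale; the rigidity of the situation (six forced zeros, degree seven, a prescribed ratio) makes it very likely that exactly one placement works, and the fact that $C_0$ attains the bound guarantees that \emph{some} admissible $f$ of this shape exists. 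The routine computations are deferred to the Appendix, consistent with the stated organization of the paper.
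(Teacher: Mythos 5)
Your overall strategy is the right one — apply Theorem~\ref{general-lp-des} with a degree-$7$ polynomial vanishing on $I(C_0)$ and matching $f(1)/f_0=93150$, then note that $C_0$ itself is a $T$-avoiding $7$-design — but the two explicit polynomials you write down fail condition (B1), and the heuristic you use to place the extra zero is wrong in a way that would lead you astray for all three sets $T$.

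Take your polynomial $f(t)=(t+1)(t+\tfrac12)(t+\tfrac14)\,t^2\,(t-\tfrac14)(t-\tfrac12)$ proposed for $T=(-\tfrac12,-\tfrac14)\cup(0,\tfrac14)$. On the interval $(\tfrac14,\tfrac12)$ — which lies in $[-1,1]\setminus T$ — the only negative factor is $(t-\tfrac12)$, so $f<0$ there (e.g.\ $f(0.4)<0$), violating (B1). Similarly, your $f(t)=(t+1)(t+\tfrac12)(t+\tfrac14)^2\,t\,(t-\tfrac14)(t-\tfrac12)$ proposed for $T=(-\tfrac14,0)\cup(\tfrac14,\tfrac12)$ is negative on $(-\tfrac12,-\tfrac14)\subset[-1,1]\setminus T$. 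The underlying error is the ansatz that the squared factor ``should be placed at one endpoint of an interval of $T$.'' On the contrary: at a double root $f$ does \emph{not} change sign, whereas at each boundary point of $T$ the sign of $f$ \emph{must} flip (from $\geq 0$ on the complement to $\leq 0$ inside, or the inequality will be violated at a nearby point of the complement). So the double root must go precisely at the unique point of $\{-\tfrac12,-\tfrac14,0,\tfrac14,\tfrac12\}$ that lies in the \emph{interior} of a component of $[-1,1]\setminus T$, i.e.\ not on $\partial T$. That point is $0$ for $T=(-\tfrac12,-\tfrac14)\cup(\tfrac14,\tfrac12)$, $-\tfrac12$ for $T=(-\tfrac14,0)\cup(\tfrac14,\tfrac12)$, and $\tfrac12$ for $T=(-\tfrac12,-\tfrac14)\cup(0,\tfrac14)$. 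The paper's polynomials $(t+1)t^2(t+\tfrac12)(t+\tfrac14)(t-\tfrac14)(t-\tfrac12)$, $(t+1)t(t+\tfrac12)^2(t+\tfrac14)(t-\tfrac14)(t-\tfrac12)$, and $(t+1)t(t+\tfrac12)(t+\tfrac14)(t-\tfrac14)(t-\tfrac12)^2$, respectively, do exactly this, and one checks $f(1)/f_0=93150$ in each case. Your fall-back suggestion — try the other endpoint of $T$ or rescale — does not help, since positive rescaling cannot fix a sign violation and every endpoint placement fails (B1) by the same reasoning.
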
 

Let $T:=(-1/2,-1/5) \cup (1/10,2/5)$. We prove that $C_{\sqrt{6}/6}$ is a minimal $T$-avoiding spherical $5$-design on $\mathbb{S}^{22}$. 

\begin{thm} \label{11178-des-t1}
We have $\mathcal{B}(23,4;T) = \mathcal{B}(23,5;T) = 11178$. 
Consequently, the code $C_{\sqrt{6}/6}$ is a tight $T$-avoiding spherical $5$-design. 
\end{thm}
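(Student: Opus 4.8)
\textbf{Proof proposal for Theorem \ref{11178-des-t1}.}

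The plan is to apply Theorem \ref{general-lp-des} (the LP bound for $T$-avoiding designs) with a polynomial $f$ of degree at most $5$, so that condition (B2) is automatic and only (B1) needs checking, exactly as was done in the proof of Theorem \ref{47104-des-t1}. The code $C_{\sqrt{6}/6}$ has $I(C_{\sqrt{6}/6})=\{-1/2,-1/5,1/10,2/5\}$, and the forbidden set is $T=(-1/2,-1/5)\cup(1/10,2/5)$; note that its endpoints are precisely the four inner products of the code. So I would take
\[
f(t)=\left(t+\tfrac12\right)\left(t+\tfrac15\right)^{2}\left(t-\tfrac{1}{10}\right)\left(t-\tfrac25\right),
\]
of degree $5$, with a single simple root at each endpoint of $T$ except for one doubled root placed at an interior inner product (here $-1/5$) to keep $f\geq 0$ on $[-1,1]\setminus T$. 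One then checks directly that $f(t)\geq 0$ on $[-1,-1/2]$, on $[-1/5,1/10]$, and on $[2/5,1]$, which is the three-piece complement of $T$ in $[-1,1]$; the sign pattern of the product of linear factors (with the square contributing nonnegatively) makes this routine. This verifies (B1), and (B2) holds since $\deg f=5\leq \tau+1=6$.

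Next I would compute $f_0$ and $f(1)$ and confirm $f(1)/f_0=11178$. The value $f(1)=(3/2)(6/5)^2(9/10)(3/5)$ is immediate; for $f_0$ one uses formula \eqref{eq:orthorep} with $n=23$, i.e. $f_0=\int_{-1}^1 f(t)\,d\mu_{23}(t)$, which reduces to evaluating the first few moments $\int t^k\,d\mu_{23}(t)$ for $k\leq 5$ and taking the appropriate linear combination. This gives $\mathcal{B}(23,5;T)\geq 11178$, and since $\mathcal{B}(23,4;T)\leq \mathcal{B}(23,5;T)$ trivially (a $5$-design is a $4$-design, so the minimization for $\tau=4$ is over a larger class) I would instead argue the reverse inequality at the $\tau=4$ level: run the same LP bound but now I need a polynomial with $f_i\leq 0$ for $i\geq 5$, which is \emph{not} automatic for a degree-$5$ polynomial. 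To handle $\mathcal{B}(23,4;T)\geq 11178$ I would either exhibit a degree-$4$ polynomial with the same quotient (placing roots at three of the four inner products, doubling one, and checking the sign), or use a degree-$5$ polynomial whose leading Gegenbauer coefficient $f_5$ is nonpositive — and since $C_{\sqrt{6}/6}$ is a $5$-design with $M_5=0$, a nonzero $f_5$ still yields equality, so I only need $f_5\le 0$ rather than $f_5=0$. The existence of $C_{\sqrt{6}/6}$ itself (a $T$-avoiding spherical $5$-design, hence also a $4$-design, with $11178$ points) furnishes the matching upper bounds $\mathcal{B}(23,4;T)\leq \mathcal{B}(23,5;T)\leq 11178$.

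The main obstacle I anticipate is the $\tau=4$ lower bound: for a $4$-design the LP certificate must have degree $\leq 5$ with nonpositive top coefficient, and it is not obvious a priori that the natural degree-$5$ candidate above satisfies $f_5\leq 0$ — if it does not, I would need to either drop to a genuine degree-$4$ certificate (sacrificing one root and re-examining positivity on the three complementary pieces, which is the delicate part) or add a correction term $\lambda\,(P_5^{(23)}(t)-c)$ tuned to push $f_5$ down while preserving (B1). Verifying nonnegativity on each of the three subintervals after such a perturbation is the step most likely to require care. Everything else — the explicit Gegenbauer expansion coefficients and the arithmetic $f(1)/f_0=11178$ — is mechanical and, following the stated convention of the paper, would be relegated to the Appendix.
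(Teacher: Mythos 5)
Your proposed polynomial $f(t)=\bigl(t+\tfrac12\bigr)\bigl(t+\tfrac15\bigr)^{2}\bigl(t-\tfrac{1}{10}\bigr)\bigl(t-\tfrac25\bigr)$ does not satisfy condition (B1): it is \emph{negative} on $[-1,-1/2]$. To see this, note that it has positive leading coefficient and only three sign-changing roots ($-\tfrac12$, $\tfrac1{10}$, $\tfrac25$), so it is negative for $t<-\tfrac12$; concretely $f(-1)=(-\tfrac12)(\tfrac{16}{25})(-\tfrac{11}{10})(-\tfrac75)<0$. The same parity obstruction kills any degree-$5$ variant obtained by doubling one of these four roots: on $[-1,-1/2]$ you would always be left with an odd number of negative simple factors. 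Your stated sign-check on $[-1,-1/2]$ is therefore in error, and the rest of the degree-$5$ route collapses along with it, as you rightly suspected it might for $\tau=4$ (a monic degree-$5$ polynomial has $f_5>0$, so (B2) can never hold at $\tau=4$).

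The paper's certificate is simpler than either of the fallbacks you sketched: take the degree-$4$ polynomial
\[
f(t)=\left(t+\tfrac12\right)\left(t+\tfrac15\right)\left(t-\tfrac1{10}\right)\left(t-\tfrac25\right),
\]
with a \emph{single} simple root at each of the four inner products. On $[-1,-\tfrac12]$ all four factors are $\le 0$, so $f\ge 0$; on $[-\tfrac15,\tfrac1{10}]$ exactly two factors are $\le 0$, so $f\ge 0$; on $[\tfrac25,1]$ all factors are $\ge 0$. Thus (B1) holds, and (B2) is automatic for $\tau=4$ (and a fortiori for $\tau=5$) since $\deg f=4$. One computes $f(1)=\tfrac{243}{250}$, $f_0=\tfrac{1}{11500}$, giving $f(1)/f_0=11178$, and the code $C_{\sqrt{6}/6}$ provides the matching upper bound. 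Your description of the degree-$4$ alternative as ``placing roots at three of the four inner products, doubling one'' is also off the mark — the correct degree-$4$ polynomial uses each inner product exactly once.
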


We proceed with the codes $C_{2816}$ and $C_{2025}$ on $\mathbb{S}^{21}$. 

\begin{thm} \label{2816-des-t1}
We have $\mathcal{B}\left(22,5;\left(0,1/3\right)\right)=
\mathcal{B}\left(22,5;\left(-1/3, 0\right)\right)=2816$.
Consequently, the code $C_{2816}$ is a tight $(0,1/3)$-avoiding spherical $5$-design and a tight $(-1/3,0)$-avoiding spherical $5$-design. 
\end{thm}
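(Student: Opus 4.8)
The plan is to apply Theorem~\ref{general-lp-des} (LP for spherical designs) twice, once for each choice of the avoided set $T=(0,1/3)$ and $T=(-1/3,0)$, exhibiting in each case a polynomial $f$ of degree at most $5$ that certifies the bound $\mathcal{B}(22,5;T)\ge 2816$, which is then matched from above by the code $C_{2816}$ itself. Recall that $C_{2816}\subset\mathbb{S}^{21}$ is an antipodal spherical $5$-design with $I(C_{2816})=\{-1,-1/3,0,1/3\}$ and $|C_{2816}|=2816$; since every polynomial we use has $\deg f\le 5=\tau$, condition (B2) is automatic and only (B1) (nonnegativity on $[-1,1]\setminus T$) must be checked.

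For $T=(0,1/3)$, I would take the natural "design-type" polynomial whose double root sits inside the forbidden interval and whose simple roots are the surviving inner products,
\[
f(t)=(t+1)\Bigl(t+\tfrac13\Bigr)\,t^{2}\Bigl(t-\tfrac13\Bigr),
\]
of degree $5$. One checks that $f(t)\ge 0$ on $[-1,0]$ (factors $(t+1)\ge 0$, $(t+1/3)$ and $(t-1/3)$ of appropriate signs with $t^2\ge 0$) and on $[1/3,1]$, so (B1) holds; the only sign change that would be problematic is precisely on $(0,1/3)$, which is excised. Then I compute the Gegenbauer expansion $f(t)=\sum_{i=0}^{5}f_iP_i^{(22)}(t)$ to extract $f_0$ and evaluate $f(1)$, and verify $f(1)/f_0=2816$. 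For the symmetric case $T=(-1/3,0)$ one uses the mirrored polynomial $f(t)=(t+1)(t+1/3)\,t^{2}(t-1/3)$ — wait, that is the same polynomial; more precisely, by the antipodality of $C_{2816}$ and the symmetry $t\mapsto -t$ of the Gegenbauer weight one applies $f(-t)$, i.e.
\[
f(t)=(1-t)\Bigl(t-\tfrac13\Bigr)\,t^{2}\Bigl(t+\tfrac13\Bigr)
\]
is not quite it either; the clean statement is that one reflects the above construction, placing the double root in $(-1/3,0)$ and keeping $(t+1)$ as a factor, obtaining again $f(1)/f_0=2816$. The upper bound $\mathcal{B}(22,5;T)\le 2816$ in both cases is witnessed by $C_{2816}$, which by construction avoids each $T$ (its inner products $-1,-1/3,0,1/3$ miss both open intervals) and is a spherical $5$-design.

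The routine-but-nontrivial step is the exact Gegenbauer expansion: one needs the coefficients $f_i$ for $P_i^{(22)}$ (Jacobi with $\alpha=\beta=19/2$), obtained either from \eqref{eq:orthorep} or by expanding $1,t,t^2,\dots,t^5$ in the $P_i^{(22)}$ basis and combining; the design LP only requires $f_0>0$ and the value $f(1)$, but for the equality case one also records that all computed $f_i$ for $i\le 5$ are such that $f_iM_i(C_{2816})=0$ is consistent (the vanishing moments $M_1=\dots=M_5=0$ make this automatic) and that $I(C_{2816})$ lies among the zeros of $f$ — which is visible from the factorization. I expect the main obstacle to be purely bookkeeping: confirming that the chosen $f$ is genuinely $\ge 0$ on all of $[-1,1]\setminus T$ (the double root at $t=0$ is what prevents a sign problem at the boundary of the excised interval) and that no arithmetic slip occurs in $f(1)/f_0$; there is no conceptual difficulty beyond correctly invoking Theorem~\ref{general-lp-des}. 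Since the paper states the full details are deferred to the Appendix, I would present only the polynomial, assert (B1), quote $f_0$ and $f(1)$, and note that equality is attained by $C_{2816}$, mirroring the structure of the proof of Theorem~\ref{47104-des-t1}.
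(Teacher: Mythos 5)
Your proposed polynomial for $T=(0,1/3)$ is wrong, and the sign analysis you give for it is incorrect. You take
\[
f(t)=(t+1)\left(t+\tfrac13\right)t^{2}\left(t-\tfrac13\right),
\]
placing the double root at $t=0$, and assert that $f\ge 0$ on $[-1,0]$. But on $(-1/3,0)$ the factors have signs $(t+1)>0$, $(t+1/3)>0$, $t^2>0$, $(t-1/3)<0$, so $f(t)<0$ there. Since $(-1/3,0)\subset[-1,1]\setminus T$, condition (B1) of Theorem~\ref{general-lp-des} fails and the LP bound does not apply. The heuristic ``double root inside the forbidden interval'' is backwards: the correct principle is that the boundary points of $T$ should be \emph{simple} roots (so that $f$ changes sign exactly upon entering and exiting $T$), while the remaining non-trivial inner products of $C_{2816}$ get the double roots (so that $f$ does not change sign there). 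For $T=(0,1/3)$ this means the double root must sit at $-1/3$, and the paper's certificate is
\[
f(t)=t(t+1)\left(t+\tfrac13\right)^{2}\left(t-\tfrac13\right),
\qquad f(1)=\tfrac{64}{27},\quad f_0=\tfrac{1}{1188},\quad \tfrac{f(1)}{f_0}=2816.
\]
Symmetrically, for $T=(-1/3,0)$ the double root goes at $1/3$:
\[
f(t)=t(t+1)\left(t+\tfrac13\right)\left(t-\tfrac13\right)^{2},
\qquad f(1)=\tfrac{32}{27},\quad f_0=\tfrac{1}{2376},\quad \tfrac{f(1)}{f_0}=2816.
\]
(Your polynomial with the $t^2$ factor is in fact the one the paper uses in the appendix for the \emph{maximal-code} bound of Theorem~\ref{2816-codes-t1} with $T=(-1,-1/3)$, where the sign requirement (A1) is the opposite inequality $f\le 0$; you have conflated the two LP set-ups.) Once the correct $f$ is used, the rest of your outline — degree $\le\tau$ gives (B2) automatically, compute $f(1)/f_0$, and match from above with $C_{2816}$ — is the same as the paper's argument.
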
 

\begin{thm} \label{2025-des-t1}
We have $\mathcal{B}(22,4;(-1/44,7/22))=\mathcal{B}(22,4;(-4/11,-1/44))=
2025$. Consequently, the code $C_{2025}$ is a tight $(-1/44,7/22)$-avoiding spherical $4$-design and a tight $(-4/11,-1/44)$-avoiding spherical $4$-design. 
\end{thm}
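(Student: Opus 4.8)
The plan is to apply Theorem~\ref{general-lp-des} twice, once for each of the two sets $T$, exactly in the manner of the proof of Theorem~\ref{47104-des-t1}. For the lower bound $\mathcal{B}(22,4;T)\ge 2025$ one exhibits a polynomial $f$ of degree at most $4$ satisfying (B1), i.e. $f(t)\ge 0$ on $[-1,1]\setminus T$, and then condition (B2) is automatic because $\deg f\le \tau=4$; the bound is $f(1)/f_0$, which must equal $2025=|C_{2025}|$. Since the code $C_{2025}$ has $I(C_{2025})=\{-4/11,-1/44,7/22\}$ and is a $4$-design, it is a $(22,2025,3,4)$-configuration, so by Theorem~\ref{thm:DGSdistances} it is distance invariant; equality in the LP bound forces $I(C)$ to lie among the zeros of $f$, so $f$ must vanish at the three inner products, which (together with the sign constraint on the remaining interval) essentially dictates the choice of $f$.

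Concretely, for $T=(-1/44,7/22)$ the forbidden set is the open interval strictly between two of the three inner products, so one takes a degree-$4$ polynomial with a double zero at the ``interior'' inner product and simple zeros at the other two, namely
\[
f(t)=\left(t+\frac{4}{11}\right)\left(t+\frac{1}{44}\right)^{2}\left(t-\frac{7}{22}\right),
\]
which is $\ge 0$ on $[-1,-1/44]$ (the factor $(t-7/22)\le 0$ times $(t+4/11)\ge 0$ gives a negative product, so actually one needs to flip: take instead the negative, or arrange the double zero so the sign works out) — in any case the double zero goes on the endpoint of $T$ that makes $f$ nonnegative on both $[-1,-1/44]$ and $[7/22,1]$. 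For the second set $T=(-4/11,-1/44)$ the forbidden interval sits between the two negative inner products, so one places the double zero appropriately among $\{-4/11,-1/44\}$ and keeps a simple zero at $7/22$, again of degree $4$. In both cases one then computes the Gegenbauer expansion $f(t)=\sum_{i=0}^{4}f_i P_i^{(22)}(t)$, checks $f_0>0$ (no sign condition is needed on $f_1,\dots,f_4$ here), evaluates $f(1)$, and verifies $f(1)/f_0=2025$. Finally, $\mathcal{B}(22,4;T)\le 2025$ is immediate because $C_{2025}$ is itself a $T$-avoiding spherical $4$-design of that cardinality, so equality holds and $C_{2025}$ is tight by definition.

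The only genuinely non-routine point is confirming the sign condition (B1): one must check that the chosen quartic is nonnegative on the whole of $[-1,1]$ minus the single open interval $T$, paying attention to which endpoint of $T$ carries the double root and to the overall sign (leading coefficient $\pm 1$). Because the polynomial has degree $4$ with exactly the three prescribed real roots (one doubled) and no others, its sign is constant on each of the complementary intervals and is determined by inspection at a single test point in each; with the double root placed on the correct endpoint of $T$, nonnegativity holds off $T$. The arithmetic of the Gegenbauer coefficients for $n=22$ (Jacobi parameters $\alpha=\beta=19/2$) is then a direct computation using \eqref{eq:orthorep}, and the equality $f(1)/f_0=2025$ is forced by the configuration data, so I do not expect surprises there. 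As in the other proofs of this section, these explicit polynomials and their expansions are relegated to the Appendix.
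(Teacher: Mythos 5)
Your overall framework is the same as the paper's: apply Theorem~\ref{general-lp-des} with a degree-$4$ polynomial vanishing at the three inner products $\{-4/11,-1/44,7/22\}$, note that (B2) is automatic since $\deg f\le \tau=4$, and compute $f(1)/f_0=2025$. But the specific polynomials you propose are wrong, and the heuristic you fall back on --- ``the double zero goes on the endpoint of $T$'' --- is exactly backwards. Write $a=-4/11<b=-1/44<c=7/22$. For $T=(b,c)=(-1/44,7/22)$ your candidate $f(t)=(t+4/11)(t+1/44)^2(t-7/22)$, with the double root at the endpoint $b$, is negative on $(a,b)\subset[-1,1]\setminus T$ (signs $(+)(+)(-)$), violating (B1). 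Doubling the other endpoint $c$ fails for the same reason. The working choice is to double $a=-4/11$, which is \emph{not} an endpoint of $T$:
\[
f(t)=\left(t+\frac{4}{11}\right)^2\left(t+\frac{1}{44}\right)\left(t-\frac{7}{22}\right),
\]
and this is indeed the polynomial the paper uses. Symmetrically, for $T=(a,b)=(-4/11,-1/44)$ you suggest doubling one of $\{a,b\}$; both fail (each makes $f$ negative on $(b,c)$, which lies in $[-1,1]\setminus T$). The paper doubles $c=7/22$:
\[
f(t)=\left(t+\frac{4}{11}\right)\left(t+\frac{1}{44}\right)\left(t-\frac{7}{22}\right)^2.
\]

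The reason is straightforward once you track signs across simple versus double roots: at the two endpoints of $T$ the polynomial \emph{should} change sign (from $\ge 0$ outside $T$ to $\le 0$ inside), so those must be simple zeros; at the third inner product, which sits in the interior of the allowed region $[-1,1]\setminus T$, the polynomial must \emph{not} change sign, so that root must be doubled. With the correct polynomials the appendix values are $f(1)=151875/117128$, $f_0=75/117128$ for the first case and $f(1)=151875/234256$, $f_0=75/234256$ for the second, giving $f(1)/f_0=2025$ in both, and the rest of your argument (the upper bound from the existence of $C_{2025}$, distance invariance via Theorem~\ref{thm:DGSdistances}) is fine.
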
 

We conclude this section with three results for the code $C_{BW}$ from the Barnes--Wall lattice in $\mathbb{R}^{16}$. 

\begin{thm} \label{4320-des-t1} Let $T$ be any of the sets from \eqref{T_93150des}. 
Then $\mathcal{B}(16,7;T)=4320$. Consequently, the code $C_{BW}$ is a tight $T$-avoiding spherical $7$-design for each $T$ from \eqref{T_93150des}. 
\end{thm}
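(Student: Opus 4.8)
The plan is to invoke Theorem \ref{general-lp-des} (the linear programming bound for $T$-avoiding spherical designs) with a carefully chosen polynomial $f$ of degree at most $7$ for each of the three sets $T$ in \eqref{T_93150des}, so that condition (B2) is automatic (as remarked after Theorem \ref{general-lp-des}) and only the nonnegativity condition (B1) on $[-1,1]\setminus T$ must be checked. Recall that $C_{BW}\subset\mathbb{S}^{15}$ is an antipodal spherical $7$-design with $I(C_{BW})=\{-1,0,\pm 1/4,\pm 1/2\}$ and $|C_{BW}|=4320$. Since $C_{BW}$ is itself a $T$-avoiding spherical $7$-design for each of these $T$ (its inner products avoid each of the three sets), the upper bound $\mathcal{B}(16,7;T)\le 4320$ is immediate; the content is the matching lower bound $f(1)/f_0=4320$.

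For the symmetric set $T=(-1/2,-1/4)\cup(1/4,1/2)$ I would take $f(t)$ to be a nonnegative degree-$6$ polynomial vanishing to appropriate orders at the surviving inner products, i.e. something of the shape $f(t)=(t+1/2)(t+1/4)t^2(t-1/4)(t-1/2)$, adjusting which zeros are doubled so that $f\ge 0$ precisely on $[-1,-1/2]\cup[-1/4,1/4]\cup[1/2,1]$; one must check the sign is correct on each of these three intervals (in particular that the leading behavior near $\pm 1$ is positive). For the two asymmetric sets $(-1/4,0)\cup(1/4,1/2)$ and $(-1/2,-1/4)\cup(0,1/4)$ — which are reflections of one another — a single polynomial handles one and its reflection $t\mapsto -t$ handles the other, so it suffices to construct $f$ for, say, $(-1/2,-1/4)\cup(0,1/4)$, with zeros placed at $-1,-1/2,-1/4,0,1/4,1/2$ and suitable multiplicities (one double zero, to keep $\deg f\le 7$ while forcing the correct sign pattern). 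In each case I then compute the Gegenbauer expansion $f(t)=\sum_{i=0}^{\deg f} f_i P_i^{(16)}(t)$ using \eqref{eq:orthorep}, record the coefficients, and verify $f(1)/f_0=4320$; condition (B2) holds trivially because $\deg f\le 7$, and if additionally some $f_i>0$ for small $i$ the extremal code is forced to be a design, consistent with $C_{BW}$.

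The main obstacle is purely the bookkeeping of choosing, for each $T$, the correct placement and multiplicity of the zeros of $f$ so that simultaneously (i) $\deg f\le 7$, (ii) $f\ge 0$ on all of $[-1,1]\setminus T$, and (iii) the normalization $f(1)/f_0$ lands exactly on $4320$; the sign analysis on the three surviving subintervals (especially near the endpoints $\pm 1$) is where an error would most likely creep in, and the Gegenbauer-coefficient computation — while routine via \eqref{eq:orthorep} — must be done exactly in rational arithmetic. Because the roles of the three sets in \eqref{T_93150des} are linked by the substitution $t\mapsto -t$ (which, for the antipodal even-dimensional setting here, preserves the class of admissible polynomials), only one genuinely new construction beyond the symmetric case is needed; the remaining verifications are deferred to the Appendix in the style of the other theorems in this section.
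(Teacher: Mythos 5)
Your overall strategy—apply Theorem~\ref{general-lp-des} with a polynomial of degree at most $7$ for each $T$—is the same as the paper's. But your concrete construction has a genuine gap in the symmetric case, and the reflection shortcut for the asymmetric cases does not work.

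For $T=(-1/2,-1/4)\cup(1/4,1/2)$ you propose the degree-$6$ polynomial $g(t)=(t+1/2)(t+1/4)t^2(t-1/4)(t-1/2)=t^2(t^2-\tfrac14)(t^2-\tfrac{1}{16})$. This indeed satisfies (B1). But since $g$ is even and $\mu_{16}$ is symmetric, one computes $g_0=m_6-\tfrac{5}{16}m_4+\tfrac{1}{64}m_2=\tfrac{1}{384}-\tfrac{5}{1536}+\tfrac{1}{1024}=\tfrac{1}{3072}$ and $g(1)=\tfrac{45}{64}$, giving $g(1)/g_0=2160$, not $4320$. The missing ingredient is the factor $(t+1)$: since $C_{BW}$ is antipodal and $-1\in I(C_{BW})$, any sharp polynomial must vanish at $-1$, and multiplying the even $g$ by $(t+1)$ doubles $f(1)$ while leaving $f_0$ unchanged (the added term $t\,g(t)$ is odd, so it integrates to zero). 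The paper uses $f(t)=(t+1)t^2(t+\tfrac12)(t+\tfrac14)(t-\tfrac14)(t-\tfrac12)$ of degree $7$, for which $f_0=\tfrac{1}{3072}$, $f(1)=\tfrac{45}{32}$, and $f(1)/f_0=4320$.

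The reflection trick for the two asymmetric sets also fails as stated. Any $f$ achieving $f(1)/f_0=4320$ must have $f(-1)=0$ (otherwise the equality conditions of Theorem~\ref{general-lp-des} contradict $-1\in I(C_{BW})$). But then $\tilde f(t):=f(-t)$ has $\tilde f(1)=0$ and the bound $\tilde f(1)/\tilde f_0$ collapses to $0$. So you cannot obtain one asymmetric case from the other by $t\mapsto -t$; the paper instead uses two separate degree-$7$ polynomials, $(t+1)t(t+\tfrac12)^2(t+\tfrac14)(t-\tfrac14)(t-\tfrac12)$ for $T=(-\tfrac14,0)\cup(\tfrac14,\tfrac12)$ and $(t+1)t(t+\tfrac12)(t+\tfrac14)(t-\tfrac14)(t-\tfrac12)^2$ for $T=(-\tfrac12,-\tfrac14)\cup(0,\tfrac14)$, both retaining the factor $(t+1)$ and placing the double zero at the interior point $\mp\tfrac12$ of the complement.
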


\section{Universally optimal \texorpdfstring{$T$}{T}-avoiding codes}

\subsection{Energy bounds for derived codes on \texorpdfstring{$\mathbb{S}^{22}$}{S22} determined by a point in \texorpdfstring{$\Lambda(2)$}{L2}}
Fixing a point in $\Lambda(2)$ we obtain three derived codes, the sharp code $C_{1/2}$ with $4600$ points, $C_{1/4}$ with $47104$ points, and $C_0$ with $93150$ points. 

For the analysis of energy bounds on $\mathbb{S}^{22}$ with $47104$ points we shall consider $T$-avoiding codes where $T$ is any of the four subintervals determined by the five points of $I(C_{1/4})$, namely \begin{equation}\label{T_47104}
T\in \left\{\left(-\frac{3}{5},-\frac{1}{3}\right), \left(-\frac{1}{3},-\frac{1}{15}\right), \left(-\frac{1}{15}, \frac{1}{5}\right), \left(\frac{1}{5},\frac{7}{15}\right)\right\}.
\end{equation}

\begin{thm} \label{47104-en-t1} Let $h$ be absolutely monotone with $h^{(8)}>0$ in $(-1,1)$ and $C \subset \mathbb{S}^{22}$ be a $T$-avoiding spherical code with $|C|=47104$, where $T$ is given by \eqref{T_47104}. Then 
\begin{equation}\label{E47104} 
\begin{split}
E_h(C) \geq \mathcal{E}_h(23,47104;T) &= 47104\left( 275h\left(-\frac{3}{5}\right)+7128h\left(-\frac{1}{3}\right) + 22275h\left(-\frac{1}{15}\right) \right.\\ 
& \ \ \ \quad \quad \quad \left. +15400h\left(\frac{1}{5}\right) +2025h\left(\frac{7}{15}\right)\right).
\end{split}
\end{equation}
The equality is attained when $C$ is a spherical 7-design with inner-product set $I(C_{1/4})$ (and hence a distance-invariant code) and distance distribution $F(C_{1/4})$. In particular, the second derived code in the minimal vectors of the Leech lattice is $T$-avoiding universally optimal on $\mathbb{S}^{22}$.
\end{thm}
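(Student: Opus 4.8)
The plan is to apply the energy linear programming bound of Theorem \ref{general-lp-energy} with a polynomial $f$ of degree $7$ that interpolates the potential $h$ at the five inner products of $C_{1/4}$, namely at $-3/5$, $-1/3$, $-1/15$, $1/5$, $7/15$. Since $C_{1/4}$ has five inner products and we want the bound to be tight, $f$ should agree with $h$ to first order (Hermite interpolation) at each of these five points; that is $12$ conditions, but we only have $8$ coefficients in a degree-$7$ polynomial, so instead I would take $f$ to be the degree-$7$ Hermite interpolant forcing $f(\alpha_i)=h(\alpha_i)$ and $f'(\alpha_i)=h'(\alpha_i)$ at the two endpoints of the avoided interval $T$ and plain interpolation $f(\alpha_j)=h(\alpha_j)$ at the other three. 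The precise choice of which points get the double node depends on which of the four intervals in \eqref{T_47104} is $T$: the double zeros in the LP polynomials of Section 5 (e.g.\ the factors $(t+3/5)^2$ and $(t-1/5)^2$ in the proof of Theorem \ref{47104-codes-t1}) indicate that the double nodes should sit at the endpoints of $T$ so that $h-f$ can be made nonnegative on $[-1,1]\setminus T$ while still vanishing (doubly) at the $I(C_{1/4})$-points adjacent to $T$.

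First I would set up $f$ as this Hermite interpolant and verify condition (E1), i.e.\ $h(t)\ge f(t)$ on $[-1,1]\setminus T$. For a fixed absolutely monotone $h$ the interpolation error has the form $h(t)-f(t)=\frac{h^{(8)}(\xi_t)}{8!}\,\omega(t)$ where $\omega(t)$ is the degree-$8$ node polynomial with double zeros at the two $T$-endpoints and simple zeros at the remaining three points of $I(C_{1/4})$ (this is exactly the polynomial appearing in the maximal-code proofs). Since $h^{(8)}>0$ on $(-1,1)$, the sign of $h-f$ on $[-1,1]\setminus T$ equals the sign of $\omega(t)$ there, and the whole construction is arranged so that $\omega\ge 0$ off $T$. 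One must be careful near $t=\pm 1$ and handle the (removable) endpoint behaviour, but this is the standard Cohn--Kumar/Levenshtein-type argument and goes through as in \cite{CK2007,BD2025}.

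Second I would verify condition (E2): $f_i\ge 0$ for all $i\ge 1$ in the Gegenbauer expansion of $f$. The cleanest route is to observe that $C_{1/4}$ is a spherical $7$-design (stated in Section 4), so by the quadrature formula \eqref{QF} the value $N f_0=\sum_i m_i f(\alpha_i)+f(1)$ together with \eqref{lower-bound} will yield that the LP lower bound \emph{equals} the actual energy $\mathcal{E}^h(C_{1/4})$, which forces the bound to be attained; but one still needs (E2) to know the bound is valid. To get (E2) I would invoke positive-definiteness together with the following: write $f(t)=g(t)+h_7(t)$ where $h_7$ is a fixed degree-$7$ polynomial absorbing the $h$-dependence; then $f_i\ge 0$ for $i=1,\dots,7$ must hold for \emph{all} admissible $h$, which (by a limiting/density argument over absolutely monotone $h$, as in \cite{BD2025}) reduces to checking that the Gegenbauer coefficients of the node polynomial $\omega$ and of the interpolation-basis polynomials are nonnegative --- a finite computation in dimension $n=23$. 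Equivalently, one exhibits for each of the four choices of $T$ the explicit Gegenbauer expansion (deferred to the Appendix, as the paper announces) and checks positivity of $f_1,\dots,f_7$ termwise, exactly as was done for $f_0,\dots,f_7$ in the proof of Theorem \ref{47104-codes-t1}.

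The main obstacle is condition (E2): unlike the maximal-code and design theorems, where a single explicit polynomial suffices, here the polynomial depends on $h$ through its values and first derivatives at the nodes, so one needs a uniform (in $h$) nonnegativity statement. The resolution, as in \cite{BD2025}, is that absolute monotonicity of $h$ lets one write $h$ as a positive combination of the basis functions $t\mapsto (t-a)^k_+$ (or $(1+t)^k$), reducing (E2) to finitely many explicit sign checks; once that structural fact is in place, verifying the equality case is immediate because $C_{1/4}$ is a $7$-design whose inner products are precisely the zeros (with the right multiplicities) of $h-f$, so the complementary-slackness conditions $f_iM_i(C)=0$ and $I(C)\subset\{h-f=0\}$ from Theorem \ref{general-lp-energy} hold automatically, giving that $C_{1/4}$ --- and hence the second derived code in the Leech kissing configuration --- is $T$-avoiding universally optimal. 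The remaining three intervals in \eqref{T_47104} are handled by the same recipe with the double node relocated to the corresponding endpoints, with details in the Appendix.
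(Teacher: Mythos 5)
Your overall plan (energy LP via Theorem \ref{general-lp-energy} with a degree-$7$ Hermite interpolant of $h$ at the points of $I(C_{1/4})$, positivity of Gegenbauer coefficients for (E2), then complementary slackness plus the $7$-design property and the quadrature formula \eqref{QF} for the equality case) is indeed the paper's strategy, but you have placed the interpolation nodes the wrong way around, and this breaks condition (E1). In the paper's proof the two endpoints of $T$ carry \emph{simple} (Lagrange) nodes and the remaining three inner products carry \emph{double} (Hermite) nodes: for $T=(-3/5,-1/3)$ the multiset is $\{-3/5,-1/3,-1/15,-1/15,1/5,1/5,7/15,7/15\}$, giving $2+6=8$ conditions for the $8$ coefficients and the error $h(t)-H_7(t)=\frac{h^{(8)}(\xi)}{8!}\,(t+\frac35)(t+\frac13)(t+\frac1{15})^2(t-\frac15)^2(t-\frac7{15})^2$, which is nonnegative on $[-1,1]\setminus T$ because the only sign change of the node polynomial happens across $T$. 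Your prescription (value and derivative at the two $T$-endpoints, plain interpolation at the other three points) gives only $4+3=7$ conditions — so the degree-$7$ interpolant is underdetermined, the error formula would involve $h^{(7)}$ rather than $h^{(8)}$, and your count of ``$12$ conditions'' should be $10$ — and, decisively, its node polynomial $(t-a)^2(t-b)^2(t-c_1)(t-c_2)(t-c_3)$ changes sign at each of the three simple nodes $c_j$, which lie \emph{outside} $T$. Concretely, for $T=(-3/5,-1/3)$ your $\omega$ is negative on $(1/5,7/15)\subset[-1,1]\setminus T$, so $f>h$ there and (E1) fails. The structure is forced the other way: at inner products interior to the allowed region, $h-f\ge 0$ together with $h-f=0$ requires an even-order zero, while at the endpoints of $T$ a simple zero suffices since $h-f$ is permitted to be negative inside $T$. (Your supporting evidence also misreads Section 5: in the displayed polynomial of Theorem \ref{47104-codes-t1} the avoided interval is $(-1/3,-1/15)$, so the squared factors $(t+3/5)^2$ and $(t-1/5)^2$ sit at the \emph{non}-endpoints of $T$, consistent with the paper's placement, not yours.)

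On (E2), your sketch (a density/limiting argument over absolutely monotone $h$, or expanding $h$ in $(t-a)_+^k$) is not what the paper does and is left too vague to certify sign conditions uniformly in $h$. The paper writes the interpolant in Newton form, $H_7(t)=h(t_1)+\sum_{i=1}^{7}h[t_1,\dots,t_{i+1}]\,P_i(t)$ with $P_i(t)=(t-t_1)\cdots(t-t_i)$: absolute monotonicity gives nonnegative divided differences, and (E2) reduces to checking that each partial product $P_i$ has nonnegative Gegenbauer coefficients — only those $P_i$ containing a factor $t-c$ with $c>0$ need explicit computation, since factors $t+c$, $c\ge 0$, are positive definite and products of positive definite polynomials are positive definite. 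With the corrected node placement this finite check (done in the paper and its appendix for all four intervals in \eqref{T_47104}) completes the argument; with your node placement no such repair is possible because (E1) itself is violated.
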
 
\begin{remark}\label{AbsM}
    As the proof shows, the $T$-avoiding universal optimality holds for a larger class, namely potentials that have $h^{(j)}\geq 0$, $j=1,2,\dots,7$, and $h^{(8)}>0$. Similar observations hold for all theorems in this section.
\end{remark}
\begin{proof}
We shall apply Theorem \ref{general-lp-energy} with an appropriately chosen polynomial $H_7(t;h,T)$, which shall interpolate the potential function $h$ at the endpoints of $T$ and the potential function $h$ and its derivative $h'$ at the other points of $I(C_{1/4})$. Let $\mathcal{I}:=\{t_1,\dots,t_8\}$ denote the multi-set associated with the Hermite/Lagrange interpolation in increasing order. 

We will illustrate the argument when $T=(-3/5,-1/3)$ and refer to the Appendix for the other three choices of $T$. For this choice the multi-set is 
$$\mathcal{I}=\left\{-\frac{3}{5},-\frac{1}{3},-\frac{1}{15},-\frac{1}{15},\frac{1}{5}, \frac{1}{5},\frac{7}{15},\frac{7}{15}\right\}.$$
Newton's interpolation formula implies that
\[
    H_7(t;h,\mathcal{I})= h(t_1)+\sum_{i=1}^7 h[t_1,\dots,t_{i+1}](t-t_1)\dots(t-t_i),
\]
where $h[t_1,\dots,t_{i+1}]$,  $i=1,\dots,7$, are the corresponding divided differences and 
$$P_i(t):=(t-t_1)\dots(t-t_i), \quad i=1,\dots,7$$ are the associated partial products. The absolute monotonicity of $h$ implies that all divided differences are non-negative. 

We shall verify directly that all partial products $P_i(t)$, $i=1,\dots,7$, have positive Gegenbauer coefficients, from which we shall derive that $H_7 (t;h,\mathcal{I})$ has positive Gegenbauer coefficients. Actually, as any linear factors $t+c$, $c\geq 0$, are positive definite and product of positive definite polynomials is positive definite (see \cite{Gas70}; this is the so-called Krein condition in \cite{Lev98}), we only need to check partial products containing factors of the type $t-c$ with $c>0$, which in this case are $P_5$, $P_6$, and $P_7$. We compute, respectively, that
\[ P_5(t)=\left(t + \frac{3}{5}\right) \left(t+\frac{1}{3}\right) \left(t+\frac{1}{15}\right)^2 \left(t-\frac{1}{5}\right)=\sum_{i=0}^5 f_iP_i^{(23)}(t), \]
with
\[f_0=\frac{1096}{388125}, \ f_1=\frac{104}{3375}, \ 
f_2=\frac{11704}{77625}, \ f_3=\frac{66088}{163125}, \ f_4=\frac{2288}{3375}, \ f_5=\frac{176}{261} ;\]
\[ P_6(t)=\left(t + \frac{3}{5}\right) \left(t+\frac{1}{3}\right) \left(t+\frac{1}{15}\right)^2 \left(t-\frac{1}{5}\right)^2=\sum_{i=0}^6 f_iP_i^{(23)}(t), \]
where
\[f_0=\frac{1504}{1940625}, \ f_1=\frac{736}{84375}, \ f_2=\frac{1497056}{33766875}, \ f_3=\frac{369952}{2446875}, \ 
f_4=\frac{523072}{1569375},\] 
\[f_5=\frac{352}{783}, \ f_6=\frac{4576}{8091} ;\]
\[ P_7(t)=\left(t + \frac{3}{5}\right) \left(t+\frac{1}{3}\right) \left(t+\frac{1}{15}\right)^2 \left(t-\frac{1}{5}\right)^2\left( t-\frac{7}{15}\right)=\sum_{i=0}^7 f_iP_i^{(23)}(t), \]
for which the Gegenbauer coefficients are: 
\[f_0=\frac{512}{29109375}, \ f_1=\frac{1024}{4078125}, \ f_2=\frac{250624}{56278125}, \ f_3=\frac{18436352}{1137796875}, f_4=\frac{80608}{1569375}, \]
\[f_5=\frac{10592}{58725}, \ f_6=\frac{4576}{40455}, \ f_7= \frac{416}{899}. \]
The verifications of the positive-definiteness of the partial products for the other three choices of $T$ are found in the appendix. Note that $P_7$ is exactly the polynomial for Theorem \ref{47104-codes-t1} in the case $T=(-3/5,-1/3)$. 

To complete the argument we utilize the Hermite error formula, which yields (recall that $T$ is an interval)
\[h(t)-H_7 (t;h,\mathcal{I})=\frac{h^{(8)}(\xi)}{8!}(t-t_1)\dots(t-t_8)\geq 0 \quad \quad {\rm for}\quad \quad t\in [-1,1)\setminus T.\]
For example, when $T=(-3/5,-1/3)$ the Hermite error formula is
\[h(t)-H_7 (t;h,\mathcal{I})=\frac{h^{(8)}(\xi)}{8!} \left (t + \frac{3}{5} \right)\left (t + \frac{1}{3} \right)\left (t + \frac{1}{15} \right)^2 \left (t - \frac{1}{5} \right)^2 \left (t - \frac{7}{15} \right)^2 ,\]
which is clearly nonnegative on $[-1,1)\setminus(-3/5,-1/3)$.

Therefore, $H_7(t;h,\mathcal{I})$ satisfies {\rm (E1)} and {\rm (E2)} in Theorem \ref{general-lp-energy} and we have
\[ E_h (C)\geq 47104^2 \left( (H_7)_0-\frac{H_7(1)}{47104}\right),\]
which, after application of \eqref{QF}, implies the bound \eqref{E47104}. 

If equality holds for some code $C$, then all moments $M_i(C)$, $i=1,\dots,7$ vanish, so the code has to be a $7$-design. Moreover, the code has to be with inner products in $I(C_{1/4})$ (and thus distance-invariant), and has to have the same frequency distribution (see \eqref{QF}), which concludes the proof.
\end{proof}

We also derive that $C_0$ is a universally optimal $T$-avoiding code, where $T$ is as in \eqref{T_93150des}. 

\begin{thm} \label{93150-en-t1}
Let $h$ be absolutely monotone with $h^{(8)}>0$ in $(-1,1)$. 
Let $C \subset \mathbb{S}^{22}$ be a $T$-avoiding spherical code with $|C|=93150$, where $T$ is given by \eqref{T_93150des}. Then 
\begin{equation*} 
\begin{split}
E_h(C) \geq \mathcal{E}_h(23,93150;T)&= 93150\left(2464\left( h\left(-\frac{1}{2}\right) +h\left (\frac{1}{2}\right)\right)\right. \\ 
& \quad \left. +22528\left( h\left(-\frac{1}{4}\right) +h\left (\frac{1}{4}\right)\right)+43164 h\left(0\right) +h(-1)\right).
\end{split}
\end{equation*}
The equality is attained when $C$ is a spherical 7-design with inner-product set $I(C_{0})$ (and hence a distance-invariant code) and distance distribution $F(C_0)$. In particular, the third derived code in the minimal vectors of the Leech lattice is $T$-avoiding universally optimal on $\mathbb{S}^{22}$.
\end{thm}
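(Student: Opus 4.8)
The plan is to run the argument of Theorem~\ref{47104-en-t1} essentially verbatim, with the five‑point set $I(C_{1/4})$ replaced by the six‑point set $I(C_0)=\{-1,-\tfrac12,-\tfrac14,0,\tfrac14,\tfrac12\}$ and with the strength‑$7$ quadrature of $C_{1/4}$ replaced by that of $C_0$. Fix one of the three sets $T$ from~\eqref{T_93150des}; in each case $T$ is a union of two open subintervals, its four endpoints all lie in $I(C_0)$, and exactly two points of $I(C_0)$ are \emph{not} endpoints of $T$ --- always $-1$, together with one further point ($0$, $-\tfrac12$, or $\tfrac12$ according to the case). Let $\mathcal I=\{t_1,\dots,t_8\}$ be the interpolation multi‑set (in increasing order) obtained by taking each of the four endpoints of $T$ with multiplicity $1$ and each of the two remaining points of $I(C_0)$ with multiplicity $2$, and let $H_7(t;h,\mathcal I)$ be the Hermite interpolant matching $h$ at all nodes and $h'$ at the double nodes; it has degree $7$. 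I would apply Theorem~\ref{general-lp-energy} with $f=H_7(t;h,\mathcal I)$.

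To verify (E1), I would use the Hermite remainder $h(t)-H_7(t;h,\mathcal I)=\dfrac{h^{(8)}(\xi)}{8!}\prod_{i=1}^{8}(t-t_i)$: the two double nodes contribute perfect squares, and on each of the three closed pieces of $[-1,1]\setminus T$ the product of the remaining simple linear factors is nonnegative by a short sign count, exactly as in the displayed example in the proof of Theorem~\ref{47104-en-t1}; note that only $h^{(8)}\ge0$ is used, which accounts for Remark~\ref{AbsM}. For (E2) I would expand via Newton's formula~\eqref{NewtonsFormula}, $H_7=h(t_1)+\sum_{i=1}^{7}h[t_1,\dots,t_{i+1}]P_i(t)$ with partial products $P_i(t)=(t-t_1)\cdots(t-t_i)$. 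All divided differences are nonnegative by absolute monotonicity, and any $P_i$ assembled solely from factors $(t-c)$ with $c\le0$ is positive definite by the Krein/Gasper criterion (\cite{Gas70},\cite{Lev98}), so it remains only to exhibit the Gegenbauer expansions in dimension $n=23$ of the one or two partial products that carry a factor $(t-c)$ with $c>0$ and to read off that all of their coefficients are $\ge0$; this is the analogue of the explicit $P_5,P_6,P_7$ displayed in the proof of Theorem~\ref{47104-en-t1}, and I would relegate it to the Appendix. This yields $(H_7)_i\ge0$ for every $i\ge1$.

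Theorem~\ref{general-lp-energy} then gives $\mathcal E^h(C)\ge 93150^2\bigl((H_7)_0-H_7(1)/93150\bigr)$ for every $T$‑avoiding $C$ with $|C|=93150$. Since $C_0$ is a spherical $7$‑design and $\deg H_7=7$, I would plug $H_7$ into the quadrature rule~\eqref{QF} for $C_0$ (nodes $I(C_0)\cup\{1\}$, weights $F(C_0)\cup\{1\}$) and use $H_7(\alpha)=h(\alpha)$ for $\alpha\in I(C_0)$ to rewrite the right‑hand side as $93150\sum_{\alpha\in I(C_0)}A_\alpha h(\alpha)$; antipodality of $C_0$ (so $A_t=A_{-t}$ and $A_{-1}=1$) collapses this to the symmetric expression in the statement, which $C_0$ attains. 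For the equality case, attainment forces $f_iM_i(C)=0$ for all $i\ge1$ and $I(C)$ to lie in the zero set of $h-H_7$, which on $[-1,1)$ is exactly $I(C_0)$; since $h^{(8)}>0$ makes the leading divided difference positive we have $(H_7)_7>0$, and together with the explicit expansions $(H_7)_i>0$ for $1\le i\le7$, so $M_i(C)=0$ for $i=1,\dots,7$, i.e.\ $C$ is a $7$‑design with at most six inner products, hence distance invariant by Theorem~\ref{thm:DGSdistances}, and~\eqref{QF} then pins its distance distribution to $F(C_0)$.

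The main obstacle, beyond handling the three cases of $T$, is the positive‑definiteness step: one must actually produce the Gegenbauer expansions of the partial products containing a positive linear factor and check nonnegativity coefficient by coefficient. This does not follow from any soft argument and genuinely uses the specific dimension $n=23$ and the special structure of $C_0$ (the equatorial section of the Leech kissing configuration $L$); the double node at $t=-1$ causes no difficulty, since $(t+1)^2\ge0$ on $[-1,1]$ and $(t+1)$ is positive definite.
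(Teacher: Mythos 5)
Your proposal follows essentially the same route as the paper: the same interpolation multi-sets (four endpoints of $T$ simple, the two remaining points of $I(C_0)$ --- including $-1$ --- doubled), the same Newton-form/partial-product argument for (E2) with only the products containing a factor $(t-c)$, $c>0$, checked explicitly (exactly $P_6,P_7$ in the first case and $P_7$ alone in the other two, as in the paper's appendix), the same Hermite-remainder sign count for (E1), and the same use of the quadrature \eqref{QF} for the value of the bound and the equality analysis. No gaps; it matches the paper's proof.
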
 

\begin{proof}
The proof proceeds as in Theorem \ref{47104-en-t1}. There are three different interpolating multi-sets 
\[
\mathcal{I}=\{t_1, t_2, t_3, t_4, t_5, t_6, t_7, t_8\},
\]
for each of the cases in \eqref{T_93150des}.
In the appendix, we verify that the corresponding partial products $P_i$, $i=1,\dots,7$, and therefore $H_7 (t;h,\mathcal{I})$, have positive Gegenbauer coefficients. The Hermite interpolation error reveals that inequality $H_7 (t;h,\mathcal{I})\leq h(t)$ holds on $[-1,1)\setminus T$ and the proof is concluded similarly to Theorem \ref{47104-en-t1}.
\end{proof}

\subsection{Energy bounds for derived codes on \texorpdfstring{$\mathbb{S}^{22}$}{S22} determined by a point in \texorpdfstring{$\Lambda(3)$}{L3}} Recall that in this case the fixed vector $\widetilde{w}$ is the projection on a fixed point $W\in \Lambda(3)$ and we obtain three derived codes, the sharp code $C_{\sqrt{6}/4}$ with $552$ points, and $C_{\sqrt{6}/6}$ and $C_{\sqrt{6}/12}$ with $11178$ and $48600$ points, respectively. Note that by Theorem \ref{derived_codes_thm} all these codes are $5$-designs.

For the analysis of energy bounds on $\mathbb{S}^{22}$ with $11178$ points we shall consider $T$-avoiding codes where $T$ is any of the three subintervals determined by the four points of $I(C_{\sqrt6/6})$, namely 
\begin{equation}\label{T_11178}
T \in \left \{ \left(-\frac{1}{2},-\frac{1}{5}\right), \left(-\frac{1}{5},\frac{1}{10}\right), \left(\frac{1}{10}, \frac{2}{5}\right) \right\}.
\end{equation}
The proof of the following theorem follows closely the approach of Theorem \ref{47104-en-t1} by selecting appropriate interpolation polynomial of degree $5$. The positive-definiteness of the relevant partial products is verified in the appendix.
\begin{thm} \label{11178-en-t1} Let $h$ be absolutely monotone with $h^{(6)}>0$ in $(-1,1)$ and $C \subset \mathbb{S}^{22}$ be a $T$-avoiding spherical code with $|C|=11178$, where $T$ is given by \eqref{T_11178}. Then 
\begin{equation*} 
\begin{split}
E_h(C) \geq \mathcal{E}_h(23,11178;T) &= 11178\left( 352h\left(-\frac{1}{2}\right)+4125h\left(-\frac{1}{5}\right)  \right.\\ 
& \ \ \ \  \quad \quad \quad \left. +5600h\left(\frac{1}{10}\right) +1100h\left(\frac{2}{5}\right)\right).
\end{split}
\end{equation*}
The equality is attained when $C$ is a spherical $5$-design with inner-product set $I(C_{\sqrt{6}/6})$ (and hence a distance-invariant code) and distance distribution $F(C_{\sqrt{6}/6})$. In particular, the second derived code with respect to $\widetilde{w}$ of the minimal vectors of the Leech lattice is $T$-avoiding universally optimal code on $\mathbb{S}^{22}$.
\end{thm} 

For the third derived code $C_{\sqrt{6}/12}$ with cardinality $48600$ we shall consider the following avoiding sets
\[
T\in \left\{ \left(-\frac{13}{23},-\frac{7}{23}\right)\cup\left(-\frac{1}{23},\frac{5}{23}\right),\left(-\frac{13}{23},-\frac{7}{23}\right)\cup\left(\frac{5}{23},\frac{11}{23}\right),\left(-\frac{7}{23},-\frac{1}{23}\right)\cup\left(\frac{5}{23},\frac{11}{23}\right) \right\}.
\]
As was the case with Theorem \ref{11178-en-t1} we shall derive the positive definiteness of the partial products in the appendix, the remainder of the proof following closely Theorem \ref{47104-en-t1}.
\begin{thm} \label{48600-en-t1} Let $h$ be absolutely monotone with $h^{(6)}>0$ in $(-1,1)$ and $C \subset \mathbb{S}^{22}$ be a $T$-avoiding spherical code with $|C|=48600$, where $T$ is given above. Then 
\begin{equation*} 
\begin{split}
E_h(C) \geq \mathcal{E}_h(23,48600;T) &= 48600\left( 506h\left(-\frac{13}{23}\right)+8855h\left(-\frac{7}{23}\right) \right.\\ 
& \  \quad \left. +23046h\left(-\frac{1}{23}\right) +14421h\left(\frac{5}{23}\right) +1771h\left(\frac{11}{23}\right)\right).
\end{split}
\end{equation*}
The equality is attained when $C$ is a spherical $5$-design with inner-product set $I(C_{\sqrt{6}/12})$ (hence a distance-invariant code) and distance distribution $F(C_{\sqrt{6}/12})$. In particular, the third derived code with respect to $\widetilde{w}$ of the minimal vectors of the Leech lattice is $T$-avoiding universally optimal code on $\mathbb{S}^{22}$.
\end{thm} 

\subsection{Energy bounds for the orthogonal complement of the Golay code}
In their seminal paper from 1977 Delsarte, Goethals, and Seidel  solved the maximal cardinality problem for $(-9/23,-1/23)$-avoiding codes and found that the orthogonal complement of the Golay code $C_G$ of cardinality $2048$ is optimal in this class (see \cite[Example 9.3]{DGS}). In this subsection we shall derive the universal optimality of $C_G$ among all $T$-avoiding codes, where 
\begin{equation} \label{T_2048}
  T \in \left \{\left(-\frac{9}{23},-\frac{1}{23}\right), \left (-\frac{1}{23},\frac{7}{23}\right) \right\}.
\end{equation}

First, we shall prove the following general theorem for codes with three inner products, or $\{\alpha, \beta, \gamma\}$-codes, where without loss of generality we assume $-1\leq \alpha\leq \beta \leq \gamma <1$. Note that Theorem \ref{general-lp-energy} (E2) has $f_i\geq 0$, $i\geq 1$, which is why we are not concerned with the $0$-th coefficient of the interpolation polynomial and hence of any of the partial products. This allows us to relax Delsarte--Goethals--Seidel conditions \eqref{DGSCond} and exclude the strict inequality from the assumptions on $\alpha, \beta, \gamma$.

\begin{thm}\label{3_energy}
Let $h$ be an absolutely monotone potential with $h^{(4)}>0$ and let $-1\leq \alpha\leq \beta \leq \gamma <1$ be such that there exists an $\{\alpha,\beta,\gamma\}$-code $B$ that is a spherical $3$-design on $\mathbb{S}^{n-1}$. Suppose
\begin{equation} \label{CondB}
\alpha\beta+\beta\gamma+\gamma\alpha \geq -\frac{3}{n+2}, \ \ \alpha+\beta+\gamma\leq 0. 
\end{equation}
   Then for any code $C$ with cardinality $|C|=|B|$ that is $(\alpha,\beta)$-avoiding or $(\beta,\gamma)$-avoiding the following energy bound holds:
\[
E_h(C) \geq \mathcal{E}_h(n,|B|;T) = |B|\left( m_\alpha h(\alpha)+m_\beta h(\beta)+m_\gamma h(\gamma) \right),
\]
where $T=(\alpha,\beta)$ or $T=(\beta,\gamma)$, respectively, and $F(B)=(m_\alpha,m_\beta, m_\gamma)$ is the distance distribution of the (distance-invariant) code $B$. If equality holds, then $C$ is a spherical $3$-design with inner-product set 
$I(C)=\{\alpha,\beta,\gamma\}$ and distance distribution $F(B)$.
\end{thm}

\begin{proof} Note that distance invariance of $B$ follows from Theorem \ref{thm:DGSdistances}. 

For $(\alpha,\beta)$-avoiding codes we select the interpolation multi-set
$\mathcal{I}=\{\alpha,\beta,\gamma,\gamma\}$. The partial products whose positive definiteness we seek are $t-\alpha$, $(t-\alpha)(t-\beta)$, and $(t-\alpha)(t-\beta)(t-\gamma)$. We may assume $\gamma>0$ (otherwise $B$ cannot be a 3-distance spherical 3-design).
We remark that the conditions in \eqref{CondB} yield the non-negativity of $f_1$ and $f_2$ (it is clear that $f_3\geq 0$) in the expansion of the polynomial
\[ (t-\alpha)(t-\beta)(t-\gamma)=f_0+f_1 P_1^{(n)} (t)+f_2 P_2^{(n)} (t)+f_3 P_3^{(n)} (t). \]
We also have from $\gamma>0$ that $\alpha+\beta<0$ and therefore the partial product
\[(t-\alpha)(t-\beta)=t^2-(\alpha+\beta)t+\alpha\beta=\frac{n-1}{n}P_2^{(n)}(t)-(\alpha+\beta)P_1^{(n)}(t)+\alpha\beta+\frac{1}{n}, \]
has positive $f_1$ and $f_2$.
Finally, as $\gamma>0$, then $\alpha<0$ because of  $\alpha+\beta+\gamma\leq 0$ and $(t-\alpha)$ has positive Gegenbauer coefficients as well. 

For $(\beta,\gamma)$-avoiding codes we select the interpolation multi-set
$$\mathcal{I}=\{\alpha,\alpha,\beta,\gamma\},$$
and the partial products are $(t-\alpha)$, $(t-\alpha)^2$, and $(t-\alpha)^2(t-\beta)$. The first two are clearly positive definite. The last  we write as
\[ (t-\alpha)^2(t-\beta)=(t-\alpha)(t-\beta)(t-\gamma)+(\gamma-\alpha)(t-\alpha)(t-\beta),\]
so it has non-negative coefficients $f_1, f_2, f_3$ (note the sign of $f_0$ does not play a role in the energy LP theorems).
The remainder of the proof follows closely Theorem \ref{47104-en-t1}.
\end{proof}

As an application we obtain the following theorem. 
\begin{thm} \label{2048-en-t1}
Let $h$ be absolutely monotone with $h^{(4)}>0$. Then for any $T$-avoiding code on $\mathbb{S}^{22}$ with $2048$ points, with $T$ given in \eqref{T_2048} we have
\[ E_h (C)\geq\mathcal{E}_h(23,2048;T)= 2048\left( 253h \left(-\frac{9}{23} \right) + 1288h\left(-\frac{1}{23}\right)+ 506h \left(\frac{7}{23}\right) \right).\]
Consequently, the code $C_{G}$ is universally optimal in the class of such $T$-avoiding spherical codes on $\mathbb{S}^{22}$. If equality is attained for any other code it has to be a spherical $3$-design with the same inner products and distance distribution as $C_{G}$.
\end{thm}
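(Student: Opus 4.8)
The plan is to apply Theorem \ref{3_energy} directly to the orthogonal complement of the Golay code $C_G$, whose three inner products are $\alpha=-9/23$, $\beta=-1/23$, and $\gamma=7/23$. First I would check that the hypotheses of Theorem \ref{3_energy} are satisfied with $n=23$. Since $C_G$ is a $(23,2048,3,\tau)$-configuration with $\tau\ge 3$ (it is a spherical $3$-design, being the standard embedding of a self-dual-type binary code; the design strength can be read off from the fact that the even subcode structure forces the first three moments to vanish, or cited from \cite{DGS}), Theorem \ref{thm:DGSdistances} gives distance invariance and the distance distribution is the stated $F(C_{2048})=\{253,1288,506\}$. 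So one takes $B=C_G$ in Theorem \ref{3_energy}.

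The remaining point is to verify the two numerical conditions in \eqref{CondB} for these values. We have $\alpha+\beta+\gamma=(-9-1+7)/23=-3/23<0$, so the second condition holds. For the first, $\alpha\beta+\beta\gamma+\gamma\alpha = (9 - 7 - 63)/23^2 = -61/529$, and we must compare this with $-3/(n+2)=-3/25$. Since $-61/529\approx-0.1153$ while $-3/25=-0.12$, indeed $-61/529 \ge -3/25$ (equivalently $61\cdot 25 = 1525 \le 3\cdot 529 = 1587$), so \eqref{CondB} is satisfied. Then Theorem \ref{3_energy} applies: for $T=(-9/23,-1/23)$ we are in the $(\alpha,\beta)$-avoiding case, and for $T=(-1/3,7/23)$ — note $-1/3 < -1/23 = \beta$, so this interval straddles $\beta$ and its right endpoint is $\gamma$, hence it is a $(\beta,\gamma)$-avoiding situation provided $-1/3$ lies in $(\beta,\gamma)$'s ``gap'', i.e. between $\beta$ and... — here I would double-check which of the two cases \eqref{T_2048}'s second set falls into; since the forbidden interval has right endpoint $\gamma=7/23$ and left endpoint $-1/3$ which is below $\beta=-1/23$, it is the $(\beta,\gamma)$-avoiding case with the extra observation that $-1/3<\alpha=-9/23$ is false ($-1/3=-9/27>-9/23$), so $-1/3$ lies strictly between $\alpha$ and $\beta$; one verifies the error term $h-H_3$ is still nonnegative on $[-1,1]\setminus T$ because the degree-$4$ product $(t-\alpha)^2(t-\beta)(t-\gamma)$ is nonnegative there (it changes sign only at $\beta$ and $\gamma$, both endpoints or interior points of $T$). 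This is exactly what the proof of Theorem \ref{3_energy} delivers.

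Putting it together: Theorem \ref{3_energy} yields the energy lower bound $\mathcal{E}^h(C)\ge |C_G|(m_\alpha h(\alpha)+m_\beta h(\beta)+m_\gamma h(\gamma)) = 2048(253\,h(-9/23)+1288\,h(-1/23)+506\,h(7/23))$ for every $T$-avoiding code of cardinality $2048$, and $C_G$ itself attains it since its $h$-energy is precisely that expression (its distance distribution is $F(C_{2048})$). The equality analysis from Theorem \ref{3_energy} forces any optimizer to be a spherical $3$-design with inner product set $\{-9/23,-1/23,7/23\}$ and distance distribution $F(C_{2048})$, which is the last sentence of the statement.

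I expect the main obstacle to be purely bookkeeping: confirming that $C_G$ is genuinely a spherical $3$-design (so that $B$ with the required properties exists) and correctly matching each of the two sets in \eqref{T_2048} to the $(\alpha,\beta)$-avoiding or $(\beta,\gamma)$-avoiding case of Theorem \ref{3_energy}, including the sign check on the Hermite remainder when the forbidden interval extends past $\alpha$ on the left. The arithmetic verification of \eqref{CondB} is immediate. No new ideas beyond Theorem \ref{3_energy} are needed; this theorem is a corollary of it.
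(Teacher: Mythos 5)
Your proposal matches the paper's proof, which is a one-liner: verify \eqref{CondB} numerically and invoke Theorem \ref{3_energy}. Your arithmetic is correct ($\alpha+\beta+\gamma=-3/23\le 0$ and $\alpha\beta+\beta\gamma+\gamma\alpha=-61/529\ge -3/25$, equivalently $1525\le 1587$), and the observation that $C_G$ is a spherical $3$-design follows from the attainment of the Delsarte--Goethals--Seidel bound with a polynomial whose coefficients $f_1,f_2,f_3$ are strictly positive under \eqref{CondB} (with $n=23$), giving distance invariance and $F(C_G)=\{253,1288,506\}$ as you describe.

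One remark on the second interval: the paper's \eqref{T_2048} prints $T=\left(-\tfrac{1}{3},\tfrac{7}{23}\right)$, which is almost certainly a misprint for $\left(-\tfrac{1}{23},\tfrac{7}{23}\right)=(\beta,\gamma)$. With the printed interval $\beta=-\tfrac{1}{23}\in T$ since $-\tfrac{1}{3}<-\tfrac{1}{23}<\tfrac{7}{23}$, so $I(C_G)\cap T\neq\emptyset$ and $C_G$ is not $T$-avoiding — the ``attainment'' clause would then be vacuous. You noticed that $-\tfrac{1}{3}$ sits between $\alpha$ and $\beta$ and correctly argued that the Hermite remainder $(t-\alpha)^2(t-\beta)(t-\gamma)$ stays nonnegative on $[-1,1]\setminus T$ (so the LP lower bound holds anyway), but the more basic inconsistency — that $C_G$ would not belong to that restricted class — should have led you to flag the typo rather than try to salvage the statement as printed. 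With the corrected $(\beta,\gamma)$ interval, your argument applies verbatim.
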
 

\begin{proof}
The conditions in \eqref{CondB} are verified directly (and were established in \cite[Example 9.3]{DGS}).   
\end{proof}

\subsection{Energy bounds for derived codes on \texorpdfstring{$\mathbb{S}^{21}$}{S21}} 
We now consider the two derived codes from Subsection \ref{Codes_21} $C_{2816}$ and $C_{2025}$. Recall that the first is an antipodal spherical $5$-design and the second is a spherical $4$-design.

The collection of $T$-avoiding intervals for $C_{2816}$ is
\begin{equation}\label{T_2816}
    T\in \left\{ \left(-1,-\frac{1}{3}\right), \left(-\frac{1}{3},0\right), \left(0,\frac{1}{3}\right) \right\}.
\end{equation}
The positive definiteness of the interpolation polynomials associated with each case is shown in the appendix, with the remainder of the proof similar to the proof of Theorem \ref{47104-en-t1}.
\begin{thm} \label{2816-en-t1}
Let $h$ be absolutely monotone with $h^{(6)}>0$. Then for any $T$-avoiding code on $\mathbb{S}^{21}$ with $2816$ points, with $T$ given in \eqref{T_2816} we have
\[ E_h (C)\geq\mathcal{E}_h(22,2816;T)= 2816\left( h(-1)+567h \left(-\frac{1}{3} \right) + 1680h\left(0\right)+ 567h \left(\frac{1}{3}\right) \right).\]
Consequently, the code $C_{2816}$ is universally optimal in the class of such $T$-avoiding spherical codes on $\mathbb{S}^{21}$. If equality is attained for any other code, it must be a spherical $5$-design with the same inner products and distance distribution as $C_{2816}$.
\end{thm}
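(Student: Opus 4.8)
The plan is to apply the energy linear programming Theorem~\ref{general-lp-energy} with a degree-$5$ Hermite interpolant of $h$, exactly mirroring the scheme already used for Theorem~\ref{47104-en-t1}. The code $C_{2816}$ has $I(C_{2816})=\{-1,-1/3,0,1/3\}$ and is an antipodal spherical $5$-design, so we have six interpolation conditions available ($h$ at all four inner products plus $h'$ at two of them) to pin down a degree-$5$ polynomial $H_5(t;h,\mathcal{I})$. For each of the three choices of $T$ in \eqref{T_2816} we build the interpolation multiset $\mathcal{I}=\{t_1,\dots,t_6\}$ that omits $h'$ at the two endpoints of the forbidden interval $T$ and doubles $h$ (i.e.\ interpolates $h$ and $h'$) at the remaining two points of $I(C_{2816})$; for instance when $T=(0,1/3)$ one takes $\mathcal{I}=\{-1,-1,-1/3,-1/3,0,1/3\}$, and when $T=(-1,1/3)$ one takes $\mathcal{I}=\{-1,-1/3,-1/3,0,0,1/3\}$.

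First I would write $H_5(t;h,\mathcal{I})$ via Newton's divided-difference formula \eqref{NewtonsFormula}, so that $H_5=h(t_1)+\sum_{i=1}^{5} h[t_1,\dots,t_{i+1}]\,P_i(t)$ with $P_i(t)=(t-t_1)\cdots(t-t_i)$; absolute monotonicity of $h$ gives that every divided difference is nonnegative. Second, I would verify that each partial product $P_i$, $i=1,\dots,5$, has nonnegative Gegenbauer coefficients $f_j^{(i)}$ for $j\ge 1$ in dimension $n=23$. By the Krein/Gasper argument cited after Theorem~\ref{47104-en-t1}, linear factors $t+c$ with $c\ge 0$ are positive definite and products of positive definite polynomials are positive definite, so only the partial products containing a factor $t-c$ with $c>0$ (i.e.\ those involving the node $1/3$, which is always the largest node) need an explicit expansion check; these computations are routine and will be deferred to the Appendix. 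Summing, $H_5(t;h,\mathcal{I})$ then has nonnegative Gegenbauer coefficients for indices $\ge 1$, giving condition (E2).

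Third, I would invoke the Hermite error formula: since $T$ is a single interval and $\mathcal{I}$ consists precisely of the two endpoints of $T$ (as simple nodes) together with the two remaining inner products (as double nodes), the error $h(t)-H_5(t;h,\mathcal{I})=\frac{h^{(6)}(\xi)}{6!}(t-t_1)\cdots(t-t_6)$ is a sign-definite multiple of a product in which the two $T$-endpoints appear to the first power and the other two nodes to even power, hence it is nonnegative on $[-1,1]\setminus T$; the hypothesis $h^{(6)}>0$ on $(-1,1)$ and the closed endpoints of $T$ being attainable handle the boundary. This gives condition (E1). Then Theorem~\ref{general-lp-energy} yields $\mathcal{E}^h(C)\ge 2816^2\big((H_5)_0-H_5(1)/2816\big)$, and applying the quadrature rule \eqref{QF} for $C_{2816}$ with nodes $I(C_{2816})\cup\{1\}$ and weights $F(C_{2816})\cup\{1\}=\{1,567,1680,567,1\}$ converts the right side into the stated value $2816\big(h(-1)+567h(-1/3)+1680h(0)+567h(1/3)\big)$. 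Equality analysis is immediate from Theorem~\ref{general-lp-energy}: equality forces $M_i(C)=0$ for $i=1,\dots,5$ (so $C$ is a $5$-design), forces $I(C)$ to lie in the zero set of $h-H_5$, which is $\{-1,-1/3,0,1/3\}$, and then \eqref{QF} forces the distance distribution to equal $F(C_{2816})$.

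The main obstacle is the explicit positive-definiteness verification of the partial products containing the node $1/3$ in dimension $23$ for each of the three multisets --- in particular for the multiset $\mathcal{I}=\{-1,-1,-1/3,-1/3,0,1/3\}$, where the double node at $-1$ interacts with the factor $(1+t)$ that is not strictly positive on $[-1,1]$, so one must confirm the relevant Gegenbauer coefficients do not go negative; this is a finite rational computation but it is the step where the argument could in principle fail and where one must be careful with the dimension-dependent recurrence for $P_i^{(23)}$. All such computations are routine and are relegated to the Appendix.
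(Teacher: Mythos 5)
Your proof follows the paper's approach closely: Hermite interpolation at a six-element multiset built from $I(C_{2816})$, Newton's divided-difference formula, the Krein/Gasper observation to reduce the positive-definiteness check, the Hermite error formula for condition (E1), and the quadrature formula \eqref{QF} to evaluate the bound. The structure and the equality analysis are exactly what the paper does.

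There are, however, two concrete slips. First, $C_{2816}\subset\mathbb{S}^{21}$, so the Gegenbauer expansions must be taken with $n=22$, i.e.\ in terms of $P_i^{(22)}$; you work in dimension~$23$ (carried over from a misprint in the theorem statement, but still wrong --- the paper's Appendix uses $P_i^{(22)}$ throughout, and so does Theorem~\ref{2816-codes-t1} via $\mathcal{A}(22,\cdot;\cdot)$). Second, the first interval in \eqref{T_2816} must be $(-1,-1/3)$, not $(-1,1/3)$: since $I(C_{2816})=\{-1,-1/3,0,1/3\}$, the code $C_{2816}$ has inner products $-1/3$ and $0$ inside $(-1,1/3)$ and is therefore \emph{not} $(-1,1/3)$-avoiding, so your reading would make the optimality claim for $C_{2816}$ vacuous. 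Taking the typo literally, you chose the multiset $\mathcal{I}=\{-1,-1/3,-1/3,0,0,1/3\}$, in which every factor is $t+c$ with $c\ge 0$ so nothing needs checking; but with the intended $T=(-1,-1/3)$ the multiset is $\{-1,-1/3,0,0,1/3,1/3\}$ and the final partial product $P_5=(t+1)(t+1/3)t^2(t-1/3)$ \emph{does} require an explicit Gegenbauer-coefficient check in $n=22$ (it is exactly the LP polynomial from the $T=(-1,-1/3)$ case of Theorem~\ref{2816-codes-t1}). Finally, your concern about the double node at $-1$ is misplaced: $(t+1)=P_0^{(n)}(t)+P_1^{(n)}(t)$ has nonnegative Gegenbauer coefficients and the Krein condition needs only nonnegativity, not strict positivity, so any partial product built solely from factors $(t+c)$, $c\ge 0$, is automatically positive definite --- there is nothing to verify for $T=(0,1/3)$.
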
 

The code $C_{2025}$ is another example of an $\{\alpha,\beta,\gamma\}$-code and the following theorem is a corollary of Theorem \ref{3_energy}. Moreover, it is another configuration attaining the bound \eqref{DGS_3bound}.

\begin{thm} \label{2025-en-t1}
Let $h$ be absolutely monotone with $h^{(4)}>0$. Then for any $T$-avoiding code $C$ on $\mathbb{S}^{21}$, $T\in \{ (-4/11,-1/44), (-1/44,7/22)\}$, with $|C|=2025$ we have
\[ E_h (C)\geq\mathcal{E}_h(22,2025;T)= 2025\left( 330h \left(-\frac{4}{11} \right) + 1232h\left(-\frac{1}{44}\right)+ 462h \left(\frac{7}{22}\right) \right).\]
Consequently, the code $C_{2025}$ is universally optimal in the class of such $T$-avoiding spherical codes on $\mathbb{S}^{21}$. If equality is attained for any other code $C$, it has to be a spherical $3$-design with the same inner products and distance distribution as $C_{2025}$.
\end{thm}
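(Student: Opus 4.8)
The plan is to deduce this statement directly from Theorem \ref{3_energy}, exactly as Theorem \ref{2048-en-t1} is obtained from it. First I would fix the data: $n=22$, $\alpha=-4/11$, $\beta=-1/44$, $\gamma=7/22$, and $B=C_{2025}$. By the construction in Subsection \ref{Codes_21}, the code $C_{2025}\subset\mathbb{S}^{21}$ is a spherical $4$-design, hence in particular a spherical $3$-design, with inner product set $I(C_{2025})=\{-4/11,-1/44,7/22\}$ and distance distribution $F(C_{2025})=\{330,1232,462\}$; thus it is an $\{\alpha,\beta,\gamma\}$-code that is a spherical $3$-design, so the existence hypothesis of Theorem \ref{3_energy} is satisfied, and $C_{2025}$ is distance invariant by Theorem \ref{thm:DGSdistances}.

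Next I would verify the two conditions in \eqref{CondB} for this triple. Over the common denominator $44$ one gets $\alpha+\beta+\gamma=(-16-1+14)/44=-3/44<0$. Over the common denominator $968$ one gets $\alpha\beta+\beta\gamma+\gamma\alpha=(8-7-112)/968=-111/968$, while $-3/(n+2)=-1/8=-121/968$, so indeed $\alpha\beta+\beta\gamma+\gamma\alpha\ge-3/(n+2)$. Hence both inequalities in \eqref{CondB} hold. For the final sentence of the statement I would additionally note $\alpha+\beta+\gamma+n\alpha\beta\gamma=-3/44+7/121=-5/484<0$ and check that the right-hand side of \eqref{DGS_3bound} equals $2025=|C_{2025}|$; this recovers the Delsarte--Goethals--Seidel observation from \cite[Example 9.3]{DGS}.

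Finally, I would observe that the avoiding set here is $T=(-4/11,-1/44)\cup(-1/44,7/22)=(\alpha,\beta)\cup(\beta,\gamma)$, so any $T$-avoiding code is in particular $(\alpha,\beta)$-avoiding (indeed also $(\beta,\gamma)$-avoiding). Theorem \ref{3_energy} then applies verbatim and gives, for every $T$-avoiding code $C$ on $\mathbb{S}^{21}$ with $|C|=2025$, the bound $\mathcal{E}^h(C)\ge2025\bigl(330\,h(-4/11)+1232\,h(-1/44)+462\,h(7/22)\bigr)$, together with the stated equality characterization: a spherical $3$-design with inner product set $\{-4/11,-1/44,7/22\}$ and distance distribution $\{330,1232,462\}$. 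Since $C_{2025}$ has exactly these inner products and this distance distribution, it attains the bound, hence is $T$-avoiding universally optimal. I expect no genuine obstacle here: all of the analytic work --- the choice of the Hermite interpolation multiset ($\{\alpha,\alpha,\beta,\gamma\}$ or $\{\alpha,\beta,\gamma,\gamma\}$), the positive-definiteness of the relevant partial products, and the Hermite remainder estimate --- is already carried out inside Theorem \ref{3_energy}; the present theorem only requires checking \eqref{CondB} for the concrete triple and recalling the design data for $C_{2025}$ established in Section 4.
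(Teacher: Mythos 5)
Your proposal is correct and follows the same route as the paper: both treat the statement as a direct corollary of Theorem \ref{3_energy}, with the only content being the verification of \eqref{CondB} for $(\alpha,\beta,\gamma)=(-4/11,-1/44,7/22)$ and $n=22$, together with recalling the design strength and distance distribution of $C_{2025}$ from Subsection \ref{Codes_21}. Your arithmetic checks out ($\alpha+\beta+\gamma=-3/44$, $\alpha\beta+\beta\gamma+\gamma\alpha=-111/968\ge -121/968=-3/24$), and since the stated avoiding set $T$ covers both $(\alpha,\beta)$ and $(\beta,\gamma)$, the hypotheses of Theorem \ref{3_energy} apply in either reading of the (somewhat ambiguous) set notation in the theorem.
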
 

\subsection{Energy bounds for derived codes on \texorpdfstring{$\mathbb{S}^{15}$}{S15}} 

We prove that the code $C_{BW}$ is a universally optimal $T$-avoiding code, where $T$ is as in \eqref{T_93150des}.

\begin{thm} \label{4320-en-t1}
Let $h$ be absolutely monotone with $h^{(8)}>0$ in $(-1,1)$. 
Let $C \subset \mathbb{S}^{15}$ be a $T$-avoiding spherical code with $|C|=4320$, where $T$ is given by \eqref{T_93150des}. Then 
\begin{equation*} 
\begin{split}
E_h(C) \geq \mathcal{E}_h(16,4320;T)&= 4320\left(280\left( h\left(-\frac{1}{2}\right) +h\left (\frac{1}{2}\right)\right)\right. \\ 
& \quad \left. +1024\left( h\left(-\frac{1}{4}\right) +h\left (\frac{1}{4}\right)\right)+1710 h\left(0\right) +h(-1)\right).
\end{split}
\end{equation*}
The equality is attained when $C$ is a spherical 7-design with set of inner products $I(C_{BW})$ (and hence a distance-invariant code) and with distance distribution $F(C_{BW})$. In particular, the kissing configuration of the Barnes--Wall lattice is $T$-avoiding (with $T$ as in \eqref{T_93150des}) universally optimal on $\mathbb{S}^{15}$.
\end{thm}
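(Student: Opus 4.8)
The plan is to follow the same Hermite interpolation strategy employed in the proof of Theorem \ref{47104-en-t1}, adapted to the dimension $n=16$ (sphere $\mathbb{S}^{15}$) and to the code $C_{BW}$, which is an antipodal spherical $7$-design with inner product set $I(C_{BW})=\{-1,-\frac12,-\frac14,0,\frac14,\frac12\}$ and distance distribution $F(C_{BW})=\{1,280,1024,1710,1024,280\}$. Fix one of the three sets $T$ in \eqref{T_93150des}. For each such $T$, the five points $I(C_{BW})\setminus\{-1\}$ together with $T$ determine a Hermite interpolation multiset $\mathcal{I}=\{t_0,t_1,\dots,t_8\}$ of size $9$: the code interpolates $h$ simply at the two endpoints of the two intervals of $T$ and at $-1$, while at each remaining inner product in $I(C_{BW})$ not an endpoint of $T$ one interpolates both $h$ and $h'$ (a double node). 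This gives a polynomial $H_7(t;h,\mathcal{I})$ of degree $7$; since $\deg H_7=7$ matches the strength of the design, the quadrature rule \eqref{QF} applied to $H_7$ reproduces exactly the right-hand side claimed in the theorem.

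The two conditions to check for Theorem \ref{general-lp-energy} are (E1) $H_7(t)\le h(t)$ on $[-1,1]\setminus T$, and (E2) all Gegenbauer coefficients $f_i$, $i\ge 1$, are nonnegative. Condition (E1) follows from the Hermite error formula: $h(t)-H_7(t;h,\mathcal{I})=\frac{h^{(8)}(\xi)}{8!}\prod_{j=0}^{8}(t-t_j)$, and since every node of $T$ and the node at $-1$ are simple while the others are doubled, the product $\prod_j(t-t_j)$ is, up to the single factor $(t+1)\ge 0$, a product of a quadratic bump on each $T$-interval times perfect squares; hence it is $\ge 0$ precisely on $[-1,1]\setminus T$, and $h^{(8)}\ge 0$ (with $h^{(8)}>0$ used for uniqueness of equality) closes (E1). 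For (E2) I would, exactly as in the proof of Theorem \ref{47104-en-t1}, write $H_7$ via Newton's formula \eqref{NewtonsFormula} as a nonnegative combination (the divided differences $h[t_0,\dots,t_{i+1}]\ge 0$ by absolute monotonicity) of the partial products $P_i(t)=\prod_{j=0}^{i-1}(t-t_j)$, and verify that each $P_i$ is positive definite on $\mathbb{S}^{15}$, i.e. has nonnegative Gegenbauer coefficients. By the Krein/product property (\cite{Gas70,Lev98}) any partial product built only from factors $t+c$ with $c\ge 0$ is automatically positive definite, so only the partial products that include a factor $t-c$ with $c>0$ (the ones containing the nodes $\frac14$ or $\frac12$) need explicit computation of their degree-$\le 7$ Gegenbauer expansions in dimension $16$, which is a finite, routine calculation relegated to the appendix.

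Once (E1) and (E2) hold, Theorem \ref{general-lp-energy} yields $\mathcal{E}^h(C)\ge 4320^2\big((H_7)_0-H_7(1)/4320\big)$, and evaluating the right side through \eqref{QF} with nodes $I(C_{BW})\cup\{1\}$ and weights $F(C_{BW})\cup\{1\}$ gives exactly the stated bound; the antipodality of $C_{BW}$ makes the $\pm$ pairing in the formula transparent. Since $C_{BW}$ itself has all inner products in $I(C_{BW})$, avoids each $T$ in \eqref{T_93150des}, and is a $7$-design, it attains equality, which also forces (per the equality clause of Theorem \ref{general-lp-energy}, using $h^{(8)}>0$) any other optimal code to be a $7$-design with $I(C)\subset\{-1,-\frac12,-\frac14,0,\frac14,\frac12\}$ and hence distance-invariant with distance distribution $F(C_{BW})$. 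The only real obstacle is the positive-definiteness verification in (E2): unlike the $48$-dimensional analogues, here the dimension is small, so the Gegenbauer polynomials $P_i^{(16)}$ have more spread in their coefficients and one must confirm that the "bad" partial products $P_5,P_6,P_7$ (and the corresponding ones for the other two choices of $T$) indeed come out with all positive coefficients — a computation I expect to succeed but which is the crux that makes the specific set $T$ in \eqref{T_93150des} work.
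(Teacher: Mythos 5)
Your proposal follows essentially the same route as the paper: build the Hermite interpolant $H_7(t;h,\mathcal{I})$ via Newton's formula, verify the partial products are positive definite in dimension $16$, use the Hermite error formula for (E1), and finish with Theorem \ref{general-lp-energy} and the quadrature formula \eqref{QF}. The overall architecture and the identification of the crux (positive-definiteness of the partial products $P_6,P_7$, etc.\ containing a factor $t-c$ with $c>0$) are both correct.

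There is, however, a bookkeeping slip in your description of the interpolation multiset. You say $\mathcal{I}$ has size $9$ and that $-1$ is a simple node; but a degree-$7$ Hermite interpolant requires exactly $8$ interpolation conditions, and the correct assignment (as in the paper's appendix, e.g.\ for $T=(-1/2,-1/4)\cup(0,1/4)$) is $\mathcal{I}=\{-1,-1,-1/2,-1/4,0,1/4,1/2,1/2\}$: the four endpoints of the two intervals of $T$ are simple nodes, while the two remaining inner products — including $-1$ — are double nodes. Your scheme (simple at the four $T$-endpoints, simple at $-1$, double at the one remaining point $1/2$) actually gives only $7$ nodes, so it neither matches the $9$ you claim nor the $8$ that produce degree $7$. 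This does not break the sign analysis (the factor $t+1$ is nonnegative on $[-1,1]$ whether it appears to the first or second power), and a degree-$6$ interpolant would also be compatible with the $7$-design quadrature, but it does mean you would be computing a different polynomial than the paper does, with different partial products to verify. Make the node count consistent before carrying out the Gegenbauer verification.
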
 

\subsection{Energy bounds for spherical codes from strongly regular graphs}

We now focus on codes from strongly regular graphs.  

The linear programming polynomial $f(t) = (t-p)(t-q)$ was already considered by 
Delsarte, Goethals and Seidel~\cite{DGS}. Recall that $-1<p<0<q<1$ and $p+q<0$ (see Lemma \ref{lm:SRGpq}). The Gegenbauer expansion of $f$ has coefficients 
\[
f_0 = \frac{npq + 1}{n}, \quad \quad f_1 = -p-q, \quad \quad f_2 = \frac{n-1}{n}. 
\]
Further, 
\[
\frac{f(1)}{f_0} = \frac{n(1-p)(1-q)}{npq+1} = v.
\]
So if $f_1 = -(p+q) \geq 0$, then $v$ is the maximal possible cardinality for codes with inner products in the interval $[p,q]$ (i.e., among $[-1,p)$-avoiding $q$-codes). Note that the converse statement is also true; i.e., every maximal $[-1,p)$-avoiding $q$-code is an embedding of a SRG~\cite{DGS}. 

We now prove the universal optimality of the appropriate embedding (that one with $p+q \leq 0)$ among the $[-1,p)$-avoiding codes. Recall that Lemma~\ref{lm:SRGpq} says that this assumption is rather mild.

\begin{thm}
A code obtained from a strongly regular graph with parameters $(v,k,\lambda,\mu)$ and $p+q \leq 0$ is universally optimal among the codes with scalar products in $[p,1]$.  
Namely, for every absolutely monotone $h$, if $p$ corresponds to an edge, then
\[
E_h(C) \geq \mathcal{E}_h(n, v ; [-1,p)) = v k h(p) + v (v-k-1) h(q)
\]
for $[-1,p)$-avoiding $C$ with $|C| = v$. Otherwise, $p$ corresponds to a non-edge and 
\[
E_h(C) \geq \mathcal{E}_h(n, v ; [-1,p)) = v (v-k-1) h(p) + v k h(q)
\]
for $[-1,p)$-avoiding $C$ with $|C| = v$.
\end{thm}

\begin{proof}
Consider a polynomial $g(t) = at^2 + bt + c$ with
\[
a = \frac{h(p) - h(q)}{(q-p)^2} + \frac{h'(q)}{q-p}, \quad \quad b = \frac{2q(h(q) - h(p))}{(q-p)^2} - \frac{(q+p)h'(q)}{q-p}, 
\]
\[
c = \frac{pqh'(q)}{q-p} + \frac {q^2h(p) + p(p-2q)h(q)}{(q-p)^2}.
\]
We have $q+p \leq 0$, $h(q) \geq h(p)$ and $h'(q) \geq \frac{h(q) - h(p)}{q-p}$, hence $a,b \geq 0$.

Also $g(p) = h(p)$, $g(q) = h(q)$, and $g'(q) = h'(q)$, so $g$ is the Hermite interpolant for $h$ with respect to the multi-set $\{p,q,q\}$.
Thus 
\[
h(t)-g(t)=\frac{h^{(3)}(\xi)}{3!}(t-p)(t-q)^2, \qquad p\leq t<1,
\]
which implies $g(t) \leq h(t)$ for $p \leq t < 1$.

The Gegenbauer expansion of $g(t)$ is given by 
\[
g_0 = \frac{a}{n} + c, \quad \quad g_1 = b, \quad \quad g_2 = \frac{(n-1)a}{n},
\]
so $g_1,g_2 \geq 0$. It is straightforward to check that the lower bound for energy is 
\[
v^2 g_0 - vg(1) = v k h(p) + v (v-k-1) h(q)
\]
or
\[
v^2 g_0 - vg(1) = v (v-k-1) h(p) + v k h(q)
\]
depending on the graph structure.
\end{proof}

\section{Appendix}

{\bf A1.} We list the details (polynomials and their Gegenbauer expansions) for the proofs of Theorems \ref{47104-codes-t1}--\ref{11178-codes-t1} and \ref{2816-codes-t1}--\ref{4320-codes-t4}. In all cases it is straightforward that the condition (A1) is satisfied and the explicit Gegenbauer coefficients show that (A2) is also satisfied. 

{\bf Theorem \ref{47104-codes-t1}}, case $T=\left(-1/15,1/5\right)$: apply Theorem \ref{general-lp-max-codes} with 
\[ f(t)= \left(t + \frac{3}{5}\right)^2 \left(t+\frac{1}{3}\right)^2 \left(t+\frac{1}{15}\right) \left(t-\frac{1}{5}\right) \left(t-\frac{7}{15}\right)=\sum_{i=0}^7 f_iP_i^{(23)}(t), \]
\[  f_0=\frac{256}{5821875}, \ f_1=\frac{10496}{12234375}, \ 
f_2=\frac{4894208}{506503125}, f_3=\frac{75615232}{1137796875}, \ 
f_4=\frac{1182368}{4708125}, \] 
\[ f_5=\frac{736}{1305}, \ 
f_6=\frac{86944}{121365}, \ f_7=\frac{416}{899}, \ f(1)=\frac{524288}{253125}. \]

{\bf Theorem \ref{47104-codes-t1}}, case $T=\left(-3/5,-1/3\right)$: apply Theorem \ref{general-lp-max-codes} with the polynomial $P_7$ from 
Theorem \ref{47104-en-t1}, additionally noting that $f(1)=1048576/1265625$.

{\bf Theorem \ref{93150-codes-t1}}: apply Theorem \ref{general-lp-max-codes} with 
\[ f(t)=t(t+1)\left(t + \frac{1}{2}\right)\left(t+\frac{1}{4}\right)
\left(t-\frac{1}{4}\right)\left(t-\frac{1}{2}\right)=\sum_{i=0}^6 f_iP_i^{(23)}(t), \] 
\[ f_0=\frac{1}{66240}, \ f_1=\frac{1}{2880}, \ f_2=\frac{671}{192096}, \ 
   f_3=\frac{33}{1160}, \ f_4=\frac{187}{1395}, \]
   \[ f_5=\frac{176}{261}, \  f_6=\frac{4576}{8091}, \ f(1)=\frac{45}{32}. \]
The same result can be obtained by $(t+1)f(t)$.  

{\bf Theorem \ref{11178-codes-t1}}, case $T=\left(-1/2,-1/5\right)$: apply Theorem \ref{general-lp-max-codes} with
\[ f(t)= \left(t + \frac{1}{2}\right)\left(t + \frac{1}{5}\right)
\left(t-\frac{1}{10}\right)^2\left(t-\frac{2}{5}\right)=\sum_{i=0}^5 f_iP_i^{(23)}(t), \]
\[ f_0=\frac{9}{115000}, \ f_1=\frac{37}{45000}, \ f_2=\frac{2101}{103500}, \ f_3=\frac{3663}{36250}, \ f_4=\frac{88}{1125}, \ f_5=\frac{176}{261}, 
f(1)=\frac{2187}{2500}. \] 

{\bf Theorem \ref{11178-codes-t1}}, case $T=\left(-1/5,1/10\right)$: apply Theorem \ref{general-lp-max-codes} with
\[ f(t)= \left(t + \frac{1}{2}\right)^2\left(t + \frac{1}{5}\right)
\left(t-\frac{1}{10}\right)\left(t-\frac{2}{5}\right)=\sum_{i=0}^5 f_iP_i^{(23)}(t), \]
\[ f_0=\frac{3}{23000}, \ f_1=\frac{91}{45000}, \ f_2=\frac{2827}{103500}, \ 
f_3=\frac{7491}{36250}, \ f_4=\frac{616}{1125}, \ f_5=\frac{176}{261}, 
\ f(1)=\frac{729}{500}. \]
                            
{\bf Theorem \ref{2816-codes-t1}}, case $T=(-1,-1/3)$: apply Theorem \ref{general-lp-max-codes} with
\[ f(t)= t^2(t+1)\left(t + \frac{1}{3}\right)\left(t-\frac{1}{3}\right)=\sum_{i=0}^5 f_iP_i^{(22)}(t), \]
\[ f_0=\frac{1}{1584}, \ f_1=\frac{19}{1872}, \ f_2=\frac{49}{429}, \ f_3=\frac{31}{144}, \ f_4=\frac{161}{208}, \ f_5=\frac{69}{104}, \ f(1)=\frac{16}{9}. \] 

{\bf Theorem \ref{2816-codes-t1}}, case $T=(-1/3,0)$: apply Theorem \ref{general-lp-max-codes} with
\[ f(t)= t(t+1)^2\left(t + \frac{1}{3}\right)\left(t-\frac{1}{3}\right)=\sum_{i=0}^5 f_iP_i^{(22)}(t), \]
\[ f_0=\frac{1}{792}, \ f_1=\frac{5}{208}, \ f_2=\frac{98}{429}, \ f_3=\frac{157}{144}, \ f_4=\frac{161}{104}, \ f_5=\frac{69}{104}, \ f(1)=\frac{32}{9}. \] 

{\bf Theorem \ref{2025-codes-t2}}: apply Theorem \ref{general-lp-max-codes} with
\[ f(t)= (t+1)\left(t + \frac{4}{11}\right)\left(t + \frac{1}{44}\right)\left(t-\frac{7}{22}\right)=\sum_{i=0}^4 f_iP_i^{(22)}(t), \]
\[ f_0=\frac{5}{5324}, \ f_1=\frac{691}{42592}, \ f_2=\frac{48699}{276848}, \ f_3=\frac{329}{352}, \ f_4=\frac{161}{208}, \ f(1)=\frac{10125}{5324}. \] 

{\bf Theorem \ref{4320-codes-t4}}: apply Theorem \ref{general-lp-max-codes} with
\[ f(t)= t(t+1)\left(t + \frac{1}{2}\right) \left(t + \frac{1}{4}\right)
\left(t-\frac{1}{4}\right)\left(t-\frac{1}{2}\right)=\sum_{i=0}^6
f_iP_i^{(16)}(t), \]
\[ f_0=\frac{1}{3072}, \ f_1=\frac{1}{192}, \ f_2=\frac{255}{11264}, \ f_3=\frac{125}{1056}, \ f_4=\frac{85}{384}, \ f_5=\frac{51}{88}, \ f_6=\frac{323}{704}, \ f(1)= \frac{45}{32}. \] 
The same result can be obtained by $(t+1)f(t)$.   

\medskip

{\bf A2.} We present the polynomials used in Theorems 
\ref{47104-des-t2}--\ref{4320-des-t1}. In all cases the condition (B2) of Theorem \ref{general-lp-des} follows simply by $\deg(f) \leq \tau$
and the condition (B1) can be easily verified from the explicit formula for $f$. 

{\bf Theorem \ref{47104-des-t2}}: apply Theorem \ref{general-lp-des} with 
\[ f(t)= \left(t + \frac{3}{5}\right) \left(t+\frac{1}{3}\right) \left(t+\frac{1}{15}\right) \left(t-\frac{1}{5}\right) \left(t-\frac{7}{15}\right)^2, \]
\[ f(1)= \frac{131072}{253125}, \ f_0=\frac{64}{5821875}, \ \frac{f(1)}{f_0}=47104. \] 

{\bf Theorem \ref{93150-des-t1}}, case $T=\left(-1/2,-1/4\right) \cup \left(1/4,1/2\right)$: apply Theorem \ref{general-lp-des} with
\[ f(t)= (t + 1)t^2\left(t + \frac{1}{2}\right) \left(t+\frac{1}{4}\right)\left(t-\frac{1}{4}\right) \left(t-\frac{1}{2}\right), \]
\[ f(1)=\frac{45}{32}, \ f_0=\frac{1}{66240}, \ \frac{f(1)}{f_0}=93150. \] 

{\bf Theorem \ref{93150-des-t1}}, case $\left(-1/4,0\right) \cup \left(1/4,1/2\right)$: apply Theorem \ref{general-lp-des} with 
\[ f(t)= (t + 1)t\left(t + \frac{1}{2}\right)^2 \left(t+\frac{1}{4}\right)\left(t-\frac{1}{4}\right) \left(t-\frac{1}{2}\right). \] 
\[ f(1)=\frac{135}{64}, \ f_0=\frac{1}{44160}, \ \frac{f(1)}{f_0}=93150. \] 

{\bf Theorem \ref{93150-des-t1}}, case $\left(-1/2,-1/4\right) \cup \left(0,1/4\right)$: apply Theorem \ref{general-lp-des} with 
\[ f(t)= (t + 1)t\left(t + \frac{1}{2}\right) \left(t+\frac{1}{4}\right)\left(t-\frac{1}{4}\right) \left(t-\frac{1}{2}\right)^2, \]
\[ f(1)=\frac{45}{64}, \ f_0=\frac{1}{132480}, \ \frac{f(1)}{f_0}=93150. \] 

{\bf Theorem \ref{11178-des-t1}}: apply Theorem \ref{general-lp-des} with
\[ f(t)= \left(t + \frac{1}{2}\right)\left(t+\frac{1}{5}\right)
  \left(t - \frac{1}{10}\right)\left(t-\frac{2}{5}\right),  \]
\[ f(1)=\frac{243}{250}, \ f_0=\frac{1}{11500}, \ \frac{f(1)}{f_0}=11178. \] 

{\bf Theorem \ref{2816-des-t1}}, case $T=(0,1/3)$: apply Theorem \ref{general-lp-des} with 
\[ f(t)= t(t+1)\left(t + \frac{1}{3}\right)^2 \left(t-\frac{1}{3}\right), \]
\[ f(1)= \frac{64}{27}, \ f_0=\frac{1}{1188}, \ \frac{f(1)}{f_0}=2816. \] 

{\bf Theorem \ref{2816-des-t1}}, case $T=(-1/3,0)$: apply Theorem \ref{general-lp-des} with 
\[ f(t)= t(t+1)\left(t + \frac{1}{3}\right) \left(t-\frac{1}{3}\right)^2, \]
\[ f(1)=\frac{32}{27}, \ f_0=\frac{1}{2376}, \ \frac{f(1)}{f_0}=2816. \] 

{\bf Theorem \ref{2025-des-t1}}, case $(-1/44,7/22)$: apply Theorem \ref{general-lp-des} with 
\[ f(t)= \left(t + \frac{4}{11}\right)^2 \left(t+\frac{1}{44}\right)\left(t-\frac{7}{22}\right), \]
\[ f(1)=\frac{151875}{117128}, \ f_0=\frac{75}{117128}, \ \frac{f(1)}{f_0}=2025. \] 

{\bf Theorem \ref{2025-des-t1}}, case $(-4/11,-1/44)$: apply Theorem \ref{general-lp-des} with 
\[ f(t)= \left(t + \frac{4}{11}\right) \left(t+\frac{1}{44}\right)\left(t-\frac{7}{22}\right)^2, \]
\[ f(1)=\frac{151875}{234256}, \ f_0=\frac{75}{234256}, \ \frac{f(1)}{f_0}=2025. \] 

{\bf Theorem \ref{4320-des-t1}}, case $T=\left(-1/2,-1/4\right) \cup \left(1/4,1/2\right)$: apply Theorem \ref{general-lp-des} with
\[ f(t)= (t + 1)t^2\left(t + \frac{1}{2}\right) \left(t+\frac{1}{4}\right)\left(t-\frac{1}{4}\right) \left(t-\frac{1}{2}\right),  \]
\[ f(1)=\frac{45}{32}, \ f_0=\frac{1}{3072}, \ \frac{f(1)}{f_0}=4320. \] 

{\bf Theorem \ref{4320-des-t1}}, case $\left(-1/4,0\right) \cup \left(1/4,1/2\right)$: apply Theorem \ref{general-lp-des} with \[ f(t)= (t + 1)t\left(t + \frac{1}{2}\right)^2 \left(t+\frac{1}{4}\right)\left(t-\frac{1}{4}\right) \left(t-\frac{1}{2}\right). \]
\[ f(1)=\frac{135}{64}, \ f_0=\frac{1}{2048}, \ \frac{f(1)}{f_0}=4320. \] 

{\bf Theorem \ref{4320-des-t1}}, case $\left(-1/2,-1/4\right) \cup \left(0,1/4\right)$: apply Theorem \ref{general-lp-des} with
\[ f(t)= (t + 1)t\left(t + \frac{1}{2}\right) \left(t+\frac{1}{4}\right)\left(t-\frac{1}{4}\right) \left(t-\frac{1}{2}\right)^2, \]
\[ f(1)=\frac{45}{64}, \ f_0=\frac{1}{6144}, \ \frac{f(1)}{f_0}=4320. \] 

\medskip

{\bf A3.} We present the polynomials from Section 7.

{\bf Theorem \ref{47104-en-t1}:} Here we verify the positive definiteness of the partial products for the remaining three choices of $T$ in \eqref{T_47104}. 

When $T=(-1/3,-1/15)$ the corresponding interpolation set is
$$\mathcal{I}=\left\{-\frac{3}{5},-\frac{3}{5}, -\frac{1}{3},-\frac{1}{15},\frac{1}{5}, \frac{1}{5},\frac{7}{15},\frac{7}{15}\right\}.$$
There are three partial products with factors $(t-c)$, $c>0$ for which we compute
\[ P_5(t)=\left(t + \frac{3}{5}\right)^2 \left(t+\frac{1}{3}\right) \left(t+\frac{1}{15}\right) \left(t-\frac{1}{5}\right)=\sum_{i=0}^5 f_iP_i^{(23)}(t), \]
with
\[f_0=\frac{728}{129375}, \ f_1=\frac{344}{5625}, \ f_2=\frac{2552}{8625}, \ f_3=\frac{127336}{163125}, \ f_4=\frac{1232}{1125}, \ f_5=\frac{176}{261} ;\]

\[ P_6(t)=\left(t + \frac{3}{5}\right)^2 \left(t+\frac{1}{3}\right) \left(t+\frac{1}{15}\right) \left(t-\frac{1}{5}\right)^2=\sum_{i=0}^6 f_iP_i^{(23)}(t), \]
where
\[f_0=\frac{992}{646875}, \ f_1=\frac{32}{1875}, \ f_2=\frac{581152}{6753375}, \ f_3=\frac{217888}{815625}, \ f_4=\frac{915904}{1569375},\] 
\[f_5=\frac{352}{435},f_6=\frac{4576}{8091} ;\]
and
\[ P_7(t)=\left(t + \frac{3}{5}\right)^2 \left(t+\frac{1}{3}\right) \left(t+\frac{1}{15}\right) \left(t-\frac{1}{5}\right)^2\left( t-\frac{7}{15}\right)=\sum_{i=0}^7 f_iP_i^{(23)}(t), \]
for which the coefficients are: 
\[f_0=\frac{256}{9703125}, \ f_1=\frac{3328}{7340625}, \ f_2=\frac{2962432}{506503125}, \ f_3=\frac{13274624}{379265625}, \ f_4=\frac{450208}{4708125}, \]
\[f_5=\frac{13408}{58725}, \ f_6=\frac{50336}{121365}, \ f_7= \frac{416}{899} .\]

For the interval $T=(-1/15,1/5)$ the corresponding interpolation set is
$$\mathcal{I}=\left\{-\frac{3}{5},-\frac{3}{5}, -\frac{1}{3},-\frac{1}{3},-\frac{1}{15}, \frac{1}{5},\frac{7}{15},\frac{7}{15}\right\}.$$
Now there are two partial products with factors $(t-c)$, $c>0$, $P_6$ and $P_7$. We have 
\[ P_6(t)=\left(t + \frac{3}{5}\right)^2 \left(t+\frac{1}{3}\right)^2 \left(t+\frac{1}{15}\right) \left(t-\frac{1}{5}\right)=\sum_{i=0}^6 f_iP_i^{(23)}(t), \]
where
\[
f_0=\frac{352}{77625}, \ f_1=\frac{4192}{84375}, \ f_2=\frac{8234336}{33766875}, \ f_3=\frac{1672352}{2446875}, \ f_4=\frac{1832512}{1569375},\] 
\[f_5=\frac{4576}{3915}, \ f_6=\frac{4576}{8091} \]
for $P_6$, while $P_7$ coincides with the polynomial $f$ from the case 
$T=(-1/15,1/5)$ of Theorem \ref{47104-codes-t1}.

Finally, for $T=(1/5,7/15)$ the interpolation set is
\[
\mathcal{I} = \left\{-\frac{3}{5},-\frac{3}{5}, -\frac{1}{3},-\frac{1}{3},-\frac{1}{15}, -\frac{1}{15},\frac{1}{5},\frac{7}{15}\right\},
\]
and we have only one partial product to verify. 

\[ P_7(t)=\left(t + \frac{3}{5}\right)^2 \left(t+\frac{1}{3}\right)^2 \left(t+\frac{1}{15}\right)^2 \left(t-\frac{1}{5}\right)=\sum_{i=0}^7 f_iP_i^{(23)}(t), \]
for which the coefficients are: 
\[f_0=\frac{14336}{5821875}, \ f_1=\frac{1004032}{36703125}, \ f_2=\frac{23589632}{168834375}, \ f_3=\frac{490358528}{1137796875}, \ f_4=\frac{6857312}{7846875}, \]
\[f_5=\frac{69728}{58725}, \ f_6=\frac{4576}{4495}, \ f_7= \frac{416}{899} .\]
\hfill $\Box$

{\bf Theorem \ref{93150-en-t1}:} First, consider $T=(-1/2,-1/4)\cup(0,1/4)$. Then 
\[ 
\mathcal{I} = \left \{-1,-1,-\frac{1}{2},-\frac{1}{4},0,\frac{1}{4},\frac{1}{2},\frac{1}{2} \right\}.
\]
In this case only the partial products $P_6$ and $P_7$ have factors $(t-c)$, $c>0$. We find, respectively, that
\[ P_6(t)=\left(t +1\right)^2 \left(t+\frac{1}{2}\right) \left(t+\frac{1}{4}\right) t \left(t-\frac{1}{4}\right)=\sum_{i=0}^6 f_iP_i^{(23)}(t), \]
with
\[
f_0=\frac{467}{82800}, \ f_1=\frac{59}{900}, \ f_2=\frac{4169}{12006}, \ f_3=\frac{12309}{11600}, \ f_4=\frac{13211}{6975}, \ f_5=\frac{440}{261}, \ f_6=\frac{4576}{8091} ;\]
and that
\[ P_7(t)=\left(t +1\right)^2 \left(t+\frac{1}{2}\right) \left(t+\frac{1}{4}\right) t \left(t-\frac{1}{4}\right)\left(t-\frac{1}{2}\right)=\sum_{i=0}^7 f_iP_i^{(23)}(t), \]
has coefficients \[f_0=\frac{1}{33120}, \ f_1=\frac{67}{104400}, \ f_2=\frac{671}{96048}, \ f_3=\frac{36069}{719200}, \ f_4=\frac{374}{1395}, \] \[f_5=\frac{233}{261}, \ f_6=\frac{9152}{8091}, \ f_7= \frac{416}{899} .\]

Next, let $T=(-1/2,-1/4)\cup(1/4,1/2)$. The interpolation set is 
\[ 
\mathcal{I} = \left\{-1,-1,-\frac{1}{2},-\frac{1}{4},0,0,\frac{1}{4},\frac{1}{2} \right\}.
\]
In this case we need to consider only one partial product
\[ P_7(t)=\left(t +1\right)^2 \left(t+\frac{1}{2}\right) \left(t+\frac{1}{4}\right) t^2 \left(t-\frac{1}{4}\right)=\sum_{i=0}^7 f_iP_i^{(23)}(t), \]
with coefficients \[f_0=\frac{59}{20700}, \ f_1=\frac{1163}{34800}, \ f_2=\frac{17347}{96048}, \ f_3=\frac{26103}{44950}, \ f_4=\frac{16951}{13950}, \] \[f_5=\frac{151}{87}, \ f_6=\frac{11440}{8091}, \ f_7= \frac{416}{899} .\]

Finally, let $T=(-1/4,0)\cup(1/4,1/2)$. The interpolation set is 
\[ 
\mathcal{I} = \left\{-1,-1,-\frac{1}{2},-\frac{1}{2},-\frac{1}{4},0,\frac{1}{4},\frac{1}{2} \right\}.
\]
Again, only one partial product has a factor $(t-c)$, $c>0$.
\[ P_7(t)=\left(t +1\right)^2 \left(t+\frac{1}{2}\right)^2 \left(t+\frac{1}{4}\right) t \left(t-\frac{1}{4}\right)=\sum_{i=0}^7 f_iP_i^{(23)}(t), \]
and its coefficients are all positive
\[f_0=\frac{313}{55200}, \ f_1=\frac{6911}{104400}, \ f_2=\frac{11341}{32016}, \ f_3=\frac{799227}{719200}, \ f_4=\frac{5027}{2325}, \] 
\[ f_5=\frac{673}{261}, \ f_6=\frac{4576}{2697}, \ f_7= \frac{416}{899} .\]
\hfill $\Box$

{\bf Theorem \ref{11178-en-t1}:} We shall consider the three cases identified in \eqref{T_11178}. 

First, when $T=(-1/2,-1/5)$ we have that the interpolation set is 
\[ 
\mathcal{I} = \left\{-\frac{1}{2},-\frac{1}{5},\frac{1}{10},\frac{1}{10},\frac{2}{5},\frac{2}{5} \right\}.
\]
There are three partial products we need to consider, $P_3$, $P_4$, and $P_5$. We have
\[ P_3(t)=\left(t+ \frac{1}{2}\right) \left(t+\frac{1}{5}\right) \left(t-\frac{1}{10}\right)=\sum_{i=0}^3 f_iP_i^{(23)}(t), \]
with
\[f_0=\frac{37}{2300}, \ f_1=\frac{3}{20}, \ f_2=\frac{66}{115}, \ f_3=\frac{22}{25}.\]
We also find that
\[ P_4(t)=\left(t+ \frac{1}{2}\right) \left(t+\frac{1}{5}\right) \left(t-\frac{1}{10}\right)^2=\sum_{i=0}^4 f_iP_i^{(23)}(t), \]
where
\[f_0=\frac{113}{23000}, \ f_1=\frac{47}{1000}, \ f_2=\frac{1903}{10350}, \ f_3=\frac{11}{25}, \ f_4=\frac{176}{225},\]
as well as $P_5$ which is the same as $f$ from the case $T=(-1/2,-1/5)$
in Theorem \ref{11178-codes-t1}.

Next, we consider $T=(-1/5,1/10)$. The interpolation set becomes 
\[ 
\mathcal{I} = \left \{-\frac{1}{2},-\frac{1}{2},-\frac{1}{5},\frac{1}{10},\frac{2}{5},\frac{2}{5} \right \}.
\]
Two partial products have all their factors of the type $t-c$, $c>0$. We find
\[ 
P_4(t)=\left(t+ \frac{1}{2}\right)^2 \left(t+\frac{1}{5}\right) \left(t-\frac{1}{10}\right)=\sum_{i=0}^4 f_iP_i^{(23)}(t), 
\]
where
\[
f_0=\frac{67}{4600}, \ f_1=\frac{137}{1000}, \ f_2=\frac{5467}{10350}, \ f_3=\frac{121}{125}, \ f_4=\frac{176}{225},
\]
and $P_5$ which is the same as $f$ from the case $T=(-1/5,1/10)$
in Theorem \ref{11178-codes-t1}.

And finally, when $T=(1/10,2/5)$. The interpolation set becomes 
\[ 
\mathcal{I} = \left \{-\frac{1}{2},-\frac{1}{2},-\frac{1}{5},-\frac{1}{5},\frac{1}{10},\frac{2}{5} \right \}.
\]
We need to verify only
\[ 
P_5(t)=\left(t+ \frac{1}{2}\right)^2 \left(t+\frac{1}{5}\right)^2 \left(t-\frac{1}{10}\right)=\sum_{i=0}^5 f_iP_i^{(23)}(t), 
\]
where
\[
f_0=\frac{51}{5750}, \ f_1=\frac{379}{4500}, \ f_2=\frac{35629}{103500}, \ f_3=\frac{5709}{7250}, \ f_4=\frac{1144}{1125}, \ f_5=\frac{176}{261}.
\]

\hfill $\Box$

{\bf Theorem \ref{48600-en-t1}:} In the first case $T=(-13/23,-7/23)\cup(-1/23,5/23)$ the interpolating set is 
\[ 
\mathcal{I} = \left\{-\frac{13}{23},-\frac{7}{23}, -\frac{1}{23}, \frac{5}{23}, \frac{11}{23}, \frac{11}{23} \right\}.
\]
We have to verify positive definiteness of two partial products:
\[ 
P_4(t)=\left(t+ \frac{13}{23}\right) \left(t+\frac{7}{23}\right) \left(t+\frac{1}{23}\right)\left(t-\frac{5}{23}\right)=\sum_{i=0}^4 f_iP_i^{(23)}(t), 
\]
where
\[
f_0=\frac{28576}{6996025}, \ f_1=\frac{13792}{304175}, \ f_2=\frac{24464}{109503}, \ f_3=\frac{352}{575}, \ f_4=\frac{176}{225},
\]
and 
\[ 
P_5(t)=\left(t+ \frac{13}{23}\right) \left(t+\frac{7}{23}\right) \left(t+\frac{1}{23}\right)\left(t-\frac{5}{23}\right)\left(t-\frac{11}{23}\right)=\sum_{i=0}^5 f_iP_i^{(23)}(t), 
\]
with
\[
f_0=\frac{576}{32181715}, \ f_1=\frac{3424}{12592845}, \ f_2=\frac{11440}{2518569}, \ f_3=\frac{1584}{76705}, \ f_4=\frac{176}{1035}, \ f_5=\frac{176}{261}.
\]

The second case is $T=(-13/23,-7/23)\cup(5/23,11/23)$ the interpolating set is 
\[
\mathcal{I} = \left\{-\frac{13}{23},-\frac{7}{23}, -\frac{1}{23}, -\frac{1}{23}, \frac{5}{23}, \frac{11}{23} \right\},
\]
in which case we need to consider only the last partial products
\[ 
P_5(t)=\left(t+ \frac{13}{23}\right) \left(t+\frac{7}{23}\right) \left(t+\frac{1}{23}\right)^2\left(t-\frac{5}{23}\right)=\sum_{i=0}^5 f_iP_i^{(23)}(t), 
\]
with
\[
f_0=\frac{345792}{160908575}, \ f_1=\frac{1506656}{62964225}, \ f_2=\frac{305008}{2518569}, \ f_3=\frac{130416}{383525}, \ f_4=\frac{2992}{5175}, \ f_5=\frac{176}{261}.
\]

The last case is when 
$T=(-7/23,-1/23)\cup(5/23,11/23)$ with interpolating set 
\[
\mathcal{I} = \left \{-\frac{13}{23},-\frac{13}{23}, -\frac{7}{23}, -\frac{1}{23}, \frac{5}{23}, \frac{11}{23} \right\}.
\]
Again, we need to examine only the last partial product
\[ 
P_5(t)=\left(t+ \frac{13}{23}\right)^2 \left(t+\frac{7}{23}\right) \left(t+\frac{1}{23}\right)\left(t-\frac{5}{23}\right)=\sum_{i=0}^5 f_iP_i^{(23)}(t), 
\]
in which case the coefficients are again positive
\[
f_0=\frac{688704}{160908575}, \ f_1=\frac{2996192}{62964225}, \ f_2=\frac{598576}{2518569}, \ f_3=\frac{252912}{383525}, \ f_4=\frac{5104}{5175}, \ f_5=\frac{176}{261}.
\]
This concludes the proof of the theorem.
\hfill $\Box$
\medskip

{\bf Theorem \ref{2816-en-t1}:} For $T=(-1,-1/3)$ the interpolation set is 
\[ \mathcal{I}=\left\{-1, -\frac{1}{3},0,0,\frac{1}{3},\frac{1}{3}\right\},\]
and the only partial product we need to consider is $P_5$, which is, in fact, the polynomial $f$ from the case $T=(-1,-1/3)$ in Theorem \ref{2816-codes-t1}. 

When $T=(-1/3,0)$ the interpolation set becomes
\[ \mathcal{I}=\left\{-1, -1, -\frac{1}{3},0,\frac{1}{3},\frac{1}{3}\right\}.\]
Thus, only the last partial product needs verification, and it is 
the polynomial $f$ from the case $T=(-1/3,0)$ in Theorem \ref{2816-codes-t1}. 

The last interval in \eqref{T_2816} is $T=(0,1/3)$ with an interpolation set
\[ \mathcal{I}=\left\{-1, -1, -\frac{1}{3},-\frac{1}{3}, 0,\frac{1}{3}\right\}.\]
In this case all factors are of type $(t+c)$, $c\geq 0$, so all partial products have positive Gegenbauer coefficients and the proof is complete.
\hfill $\Box$

\medskip
{\bf Theorem \ref{4320-en-t1}:} For the first case, $T=(-1/2,-1/4)\cup (0,1/4)$, the interpolation set is 
\[ \mathcal{I}=\left\{-1,-1,-\frac{1}{2},-\frac{1}{4},0,\frac{1}{4},\frac{1}{2},\frac{1}{2} \right\}.\]
There are two partial products with factors $(t-c)$, $c>0$ and we find that 
\[ 
P_7(t)=\left(t+ 1\right)^2 \left(t+\frac{1}{2}\right)\left(t+\frac{1}{4}\right)t \left(t-\frac{1}{4}\right)\left(t-\frac{1}{2}\right)=\sum_{i=0}^7 f_iP_i^{(16)}(t), 
\]
with
\[
f_0=\frac{1}{1536}, \ f_1=\frac{17}{2112}, \ f_2=\frac{255}{5632}, \ f_3=\frac{755}{4224}, \ f_4=\frac{85}{192}, \
f_5=\frac{15861}{18304}, \ f_6=\frac{323}{352}, \ 
f_7=\frac{1615}{4576},\]
and 
\[ 
P_6(t)=\left(t+ 1\right)^2 \left(t+\frac{1}{2}\right)\left(t+\frac{1}{4}\right)t \left(t-\frac{1}{4}\right)=\sum_{i=0}^6 f_iP_i^{(16)}(t), 
\]
where
\[
f_0=\frac{23}{1536}, \ f_1=\frac{25}{192}, \ f_2=\frac{2949}{5632}, \ f_3=\frac{2605}{2112}, \ f_4=\frac{697}{384}, \ 
f_5=\frac{255}{176}, \ f_6=\frac{323}{704}.\]

When $T=(-1/2,-1/4)\cup (1/4,1/2)$, the interpolation set is 
\[ \mathcal{I}=\left\{-1,-1,-\frac{1}{2},-\frac{1}{4},0,0,\frac{1}{4},\frac{1}{2} \right\}.\]
The last partial product is
\[ 
P_7(t)=\left(t+ 1\right)^2 \left(t+\frac{1}{2}\right)\left(t+\frac{1}{4}\right)t^2 \left(t-\frac{1}{4}\right)=\sum_{i=0}^7 f_iP_i^{(16)}(t), 
\]
with
\[
f_0=\frac{25}{3072}, \ f_1=\frac{103}{1408}, \ f_2=\frac{3459}{11264}, \ f_3=\frac{35}{44}, \ f_4=\frac{1037}{768}, \ 
f_5=\frac{29121}{18304}, \] \[ f_6=\frac{1615}{1408}, \ 
f_7=\frac{1615}{4576},\]
which establishes the positive definiteness of $H_7(t;h,T)$ in this case as well.

To complete the proof of the theorem we consider $T=(-1/4,0)\cup (1/4,1/2)$
with interpolation set 
\[ \mathcal{I}= \left\{-1,-1,-\frac{1}{2},-\frac{1}{2},-\frac{1}{4},0,\frac{1}{4},\frac{1}{2} \right\}.\]
We verify that the last partial product is positive-definite:
\[ 
P_7(t)=\left(t+ 1\right)^2 \left(t+\frac{1}{2}\right)^2\left(t+\frac{1}{4}\right)t \left(t-\frac{1}{4}\right)=\sum_{i=0}^7 f_iP_i^{(16)}(t), 
\]
where
\[
f_0=\frac{1}{64}, \ f_1=\frac{73}{528}, \ f_2=\frac{801}{1408}, \ f_3=\frac{5965}{4224}, \ f_4=\frac{289}{128}, \ 
f_5=\frac{42381}{18304}, \ f_6=\frac{969}{704}, \
f_7=\frac{1615}{4576},\]
ending the proof.
\hfill $\Box$

\bigskip

{\bf Acknowledgment.} We thank the honoree for enthusiastically promoting the subject of minimal energy and for his influence and inspiration throughout our careers.

\bigskip
{\bf Funding.} The research of the first two authors is supported by Bulgarian NSF grant KP-06-N72/6-2023. The research of the third author is supported, in part, by the Lilly Endowment.

\end{document}